\numberwithin{equation}{section}
\numberwithin{figure}{section}
\newtheorem{theorem}{Theorem}[section]
\newtheorem{lemma}[theorem]{Lemma}
\newtheorem{remark}[theorem]{Remark}
\begin{document}

\newcommand{\nn}{\mathbb N}
\newcommand{\cD}{\mathcal D}
\newcommand{\cR}{\mathcal R}
\newcommand{\cC}{\mathcal C}
\newcommand{\cB}{\mathcal B}
\newcommand{\cA}{\mathcal A}
\newcommand{\cM}{\mathcal M}
\newcommand{\cc}{\mathbb{C}}

\newcommand{\zz}{\mathbb Z}
\newcommand{\rr}{\mathbb R}
\newcommand{\ringz}{\mathring{\Sigma_{\mathbb Z}}}
\newcommand{\ve}{\varepsilon}
\newcommand{\tit}{\tilde{t}}
\newcommand{\tif}{\tilde{f}}

\newcommand{\tiD}{\widetilde{\mathcal D}}

\newcommand{\ld}{\lambda}
\newcommand{\til}{\tilde{\lambda}}
\newcommand{\tib}{\tilde{b}}
\newcommand{\tia}{\tilde{a}}
\newcommand{\one}{\mathbbm{1}_{I}(M_{\Lambda})}
\makeatletter
\def\blfootnote{\xdef\@thefnmark{}\@footnotetext}
\makeatother

\setcounter{secnumdepth}{3}
\title{Resonances for 1D half-line periodic operators: I. Generic case}
\author{Trinh Tuan Phong}

\maketitle

\begin{abstract}
The present paper addresses questions on resonances for \\
a $1$D Schr\"{o}dinger operator with truncated periodic potential. Precisely, we consider the half-line operator $H^{\nn}=-\Delta +V$ and $H^{\nn}_{L}= -\Delta + V\mathbbm{1}_{[0, L]}$ acting on $\ell^{2}(\mathbb N)$ with Dirichlet boundary condition at $0$ with $L \in \nn$. We describe the resonances of $H^{\nn}_{L}$ near the boundary of the essential spectrum of $H^{\nn}$ as $L \rightarrow +\infty$ in the generic case, hence complete the results introduced in \cite{klopp13} on the resonances of $H^{\nn}_{L}$.
\end{abstract}

\section{Introduction}\label{S:Intro}
\blfootnote{I would like to express my thanks to Fr\'{e}d\'{e}ric Klopp, my advisor, who walked me through the difficulties that I encountered while carrying out this work.}
 Let $V$ be a periodic potential of period $p$ and $-\Delta$ be the (negative) discrete Laplacian on $l^{2}(\mathbb Z)$. 
 We define the $1$D Schr\"{o}dinger operators $H^{\mathbb Z}:=-\Delta +V$ acting on $l^{2}(\mathbb Z)$:
 \begin{equation}\label{eq:9.1}
(H^{\mathbb Z}u)(n)= \left ((-\Delta +V)u\right) (n)= u(n-1)+u(n+1) +V(n)u(n), \;\; \forall n\in \mathbb Z
\end{equation} 
 and $H^{\mathbb N}:=-\Delta+V$ acting on $l^{2}(\mathbb N)$ with Dirichlet boundary condition (b.c.) at $0$.\\
 Denote by $\Sigma_{\mathbb Z}$ the spectrum of $H^{\mathbb Z}$ and $\Sigma_{\mathbb N}$ the spectrum of $H^{\mathbb N}$. One has the following description for the spectra of $H^{\bullet}$ where $\bullet \in \{\mathbb N, \mathbb Z\}$:  
\begin{itemize}
\item[$\bullet$]  $\Sigma_{\mathbb Z}$ is a union of disjoint intervals; the spectrum of $H^{\mathbb Z}$ is purely absolutely continuous (a.c.) and the spectral resolution can be obtained via the Bloch-Floquet decomposition (see \cite{pierre76} for more details).
\item[$\bullet$] $\Sigma_{\mathbb N}=\Sigma_{\mathbb Z} \cup \{v_i\}_{i=1}^m$ where $\Sigma_{\mathbb Z}$ is the a.c. spectrum of $H^{\mathbb N}$ and $\{v_i\}_{i=1}^m$ are isolated simple eigenvalues of $H^{\nn}$ associated to exponentially decaying eigenfunctions (c.f. \cite{pavlov94}).
\end{itemize}
%It may happens that some of the gaps are closed, i.e., the number of connected components are strictly less than $p$. In this case, we can still write $\Sigma_{\mathbb Z}=\bigcup_{k=1}^p[E^{-}_k, E^{+}_k]$, but for some $j$'s, one has $E^{+}_{k-1} = E_k^{-}$; the energies $E^{+}_{k-1} = E_k^{-}$, we shall call \textit{closed gaps}. The existence of closed gaps is non generic.
%\subsection{The meromorphic continuation of the resolvent}
Pick a large natural number $L$, we set:
$$ \text{$H^{\mathbb N}_L:=-\Delta+ V\mathbbm{1}_{[0, L]}$ on $l^2(\mathbb N)$ with Dirichlet boundary condition (b.c.) at $0$. }$$
%\begin{itemize}
%\item $H^{\mathbb N}_L:=-\Delta+ V\mathbbm{1}_{[0, L]}$ on $l^2(\mathbb N)$ with Dirichlet boundary conditions (b.c.) at $0$;
%\item $H^{\mathbb Z}_L:=-\Delta+ V\mathbbm{1}_{[0, L]}$ on $l^2(\mathbb Z)$.
%\end{itemize}
It is easy to check that the operator $H^{\nn}_L$ is self-adjoint. Then, the resolvent $z\in \mathbb C^{+}\mapsto (z-H^{\nn}_L)^{-1}$ is well defined  on $\mathbb C^{+}$. Moreover, one can show that $z \mapsto (z-H^{\nn}_L)^{-1}$ admits a meromorphic continuation from $\mathbb C^{+}$ to $\mathbb C \backslash \left((-\infty, -2] \cup [2, +\infty)\right) $ with values in the self-adjoint operators from $l^2_{comp}$ to $l^2_{loc}$. Besides, the number of poles of this meromorphic continuation in the lower half-plane $\{Im E<0\}$ is at most equal to $L$ (c.f.  \cite[Theorem 1.1]{klopp13}). This kind of results is an analogue in the discrete setting for meromorphic continuation of the resolvents of partial differential operators (c.f. e.g. \cite{SZ91}).\\ 
Now, we define the \textit {resonances} of $H^{\nn}_L$, the main objet to study in the present paper, as the poles of the above meromorphic continuation. The resonance widths, the imaginary parts of resonances, play an important role in the large time behavior of wave packets, especially the resonances of the smallest width that give the leading order contribution (see \cite{SZ91} for an intensive study of resonances in the continuous setting and \cite{IK12, IK14, IK141, BNW05, korotyaev11} for a study of resonances of various $1$D operators).
\subsection{Resonance equation for the operator $H^{\nn}_{L}$}\label{Ss:equation}
Let $H_L$ be $H^{\mathbb N}_L$ restricted to $[0, L]$ with Dirichlet b.c. at $L$. Then, assume that 
\begin{itemize}
\item $\lambda_0 \leq \lambda_1\leq \ldots \leq \lambda_L$ are Dirichlet eigenvalues of $H_L$;
\item $a_k=a_k(L):=|\varphi_k(L)|^2$ where $\varphi_k$ is a normalized eigenvector associated to $\lambda_k$.
\end{itemize}
Then, resonances of $H^{\mathbb N}_L$ are solutions of the following equation (c.f. \cite[Theorem 2.1]{klopp13}): 
\begin{equation}\label{eq:resN}
S_L(E):=\sum_{k=0}^L \dfrac{a_k}{\lambda_k-E}=-e^{-i\theta(E)}, \qquad E= 2\cos\theta(E).
\end{equation} 
where the determination of $\theta(E)$ is chosen so that $\text{Im}\theta(E)>0$ and $\text{Re}\theta(E)\in (-\pi,0)$ when $\text{Im}E>0$.\\
Note that the map $E\mapsto \theta(E)$ can be continued analytically from $\cc^{+}$ to the cut plane $\cc\backslash ((-\infty, 2]\cup [2,+\infty))$ and its continuation is a bijection from $\cc\backslash ((-\infty, 2]\cup [2,+\infty))$ to $(-\pi, 0)+i\rr$. In particular, $\theta(E) \in (-\pi, 0)$ for all $E\in (-2, 2)$.\\
Taking imaginary parts of two sides of the resonance equation \eqref{eq:resN}, we obtain that    
\begin{equation}\label{eq:easyfact}
\text{Im} S_L(E)=\text{Im} E \sum_{k=0}^L \dfrac{a_k}{|\lambda_k-E|^2}=e^{\text{Im}E}\sin(\text{Re}\theta(E)).
\end{equation}
Note that, according to the choice of the determination $\theta(E)$, whenever $\text{Im} E>0$, $\sin(\text{Re}\theta(E))$ is negative and $\text{Im}S_L(E)>0$. Hence, all resonances of $H^{\mathbb N}_L$ lie completely in the lower half-plane $\{\text{Im} E<0\}$.

The distribution of resonances of $H^{\nn}_L$ in the limit $L\rightarrow +\infty$ was studied intensively in \cite{klopp13}. 
All results proved in \cite{klopp13} assume that the real part of resonances are far from the boundary point of the spectrum $\Sigma_{\zz}$ and far from the point $\pm 2$, the boundary of the essential spectrum of $-\Delta$. By "far", we mean the distance between resonances and $\partial \Sigma_{\mathbb Z} \cup \{\pm2\} $ is bigger than a positive constant independent of $L$.\\
In the present paper, we are interested in phenomena which can happen for resonances whose real parts are near $\partial \Sigma_{\zz}$ but still far from $\pm 2$. To study resonances below compact intervals in $\ringz$, the interior of $\Sigma_{\zz}$, the author in \cite{klopp13} introduced an analytical method to simplify and resolve the equation \eqref{eq:resN} (c.f. \cite[Theorems 5.1 and 5.2]{klopp13}). Unfortunately, such a method was efficient inside $\Sigma_{\zz}$ but does not seem to work near $\partial \Sigma_{\zz}$. Hence, a different approach is thus needed to study resonances near $\partial \Sigma_{\zz}$.

We observe from \eqref{eq:resN} that the behavior of resonances is completely determined by spectral data $(\ld_{k})_{k}, (a_{k})_{k}$ of $H_{L}$. As pointed out in \cite{klopp13}, the parameters $a_k$ associated to $\lambda_k \in \ringz$ near a boundary point $E_{0} \in \partial \Sigma_{\zz}$ can have two different behaviors depending on the potential $V$: Either $a_k \asymp \frac{1}{L}$ (the non-generic case) or $a_{k}$ is much smaller, $a_k \asymp  \frac{|\lambda_k-E_0|}{L}$ (the generic one). Each case requires a particular approach for studying resonances. \textit{In this paper, we only deal with the generic case}. 
\subsection{Resonances of $H^{\mathbb N}_L$ near $\partial \Sigma_{\zz}$ in the generic case} \label{Ss:results}
Pick $E_{0} \in \partial \Sigma_{\zz} \cap (-2, 2)$ and $L=Np+j$ where $p$ is the period of the potential $V$ and $0 \leq j \leq p-1$. To fix ideas, let's assume that $E_{0}$ is the left endpoint of a band $B_{i}=[E_{0}, E_{1}] \subset \Sigma_{\zz}$. For $L=Np+j$ large, we define $a^{0}_{p-1}(E), a_{j+1}(E)$ and $ d_{j+1}$ as in \eqref{eq:monodromy}, \eqref{eq:product} and Remark \ref{R:nearboundary}.\\ 
The generic case corresponds to the following generic condition 
$$\text{either} \left( a^{0}_{p-1}(E_{0}) \ne 0 \text{ and } d_{j+1}\ne 0\right) \text{or} \left( a^{0}_{p-1}(E_{0})=0 \text{ and } a_{j+1}(E_{0}) \ne 0 \right). \eqno{(G)} $$
Note that, by \cite[Lemma 4.2]{klopp13}, for $L=Np+j$ large,\\ 
$\partial \Sigma_{\zz} \cap \sigma(H_{L}) =\{E_{0}; a^{0}_{p-1}(E_{0})= a_{j+1}(E_{0})=0 \text{ and } b^{0}_{p}(E)\ne 0\}$. Therefore, in the generic case, $E_{0}$ is not an eigenvalue of $H_{L}$ when $L$ is large.\\    
Recall that all resonances whose real parts belong to a compact set in $(-2, 2) \cap \ringz$ were studied in \cite{klopp13}. In this paper, we will study the resonance equation \eqref{eq:resN} in the rectangle $\cD=\left[E_0, E_0+\ve_1\right] -i[0, \ve_{2}]$ where $\ve_{1}\asymp \ve^{2}$ and $\ve_{2} \asymp \ve^{5}$ with $\ve>0$ small. We will split $\cD$ into smaller rectangles $\{\cB_{n,\ve}\}$ with $0 \leq n \lesssim \ve L$ and study the existence, the uniqueness and the asymptotic of resonances in those rectangles (see Theorem \ref{T:maingeneric} and Figure \ref{F:f5}).

Following is our main idea to study the resonance equation \eqref{eq:resN} in the generic case. Let $H_L$, $(\ld_k)_{k}$ and $(a_k)_{k}$ be defined as in Subsection \ref{Ss:equation}. We figure out that, near the boundary of $\Sigma_{\zz}$, the spectral data $\{\ld_{k}\}$ and $\{a_{k}\}$ generically possess special properties. We exploit them to approximate $S_{L}(E)$. Concretely, for each eigenvalue $\ld_{k}$ of $H_{L}$ near $\partial \Sigma_{\zz}$, we approximate $S_{L}(E)$ in a domain close to $\ld_{k}$ by keeping the term $\frac{a_{k}}{\ld_{k}-E}$ and replacing the sum of the other terms by $\sum\limits_{\ell \ne k}\frac{a_{\ell}}{\ld_{\ell}-\ld_{k}}$. Then, we use the Rouch\'{e}'s theorem to describe resonances. For a domain farther from $\ld_k$, we make use of behavior of spectral data to show that there are no resonances there.\\
Surprisingly, this method has a flavor of the one used in describing resonances for operators with random potentials and resonances near isolated eigenvalues of $H^{\nn}$ for operators with periodic potentials (see \cite[Theorem 3.3]{klopp13}). Nonetheless, in the present case, the situation is different in many aspects. The hypotheses in \cite[Theorem 3.3]{klopp13} obviously do not hold in our case and a modification of the proofs in \cite{klopp13} could not be a solution either. Moreover, to obtain the asymptotic formulae for resonances in Theorem \ref{T:maingeneric}, it's crucial to study the regularity of spectral parameters (c.f. Section \ref{S:smoothness}). 

Our main result is the following:
\begin{theorem}\label{T:maingeneric}
Assume that $E_0 \in (-2, 2)$ is the left endpoint of the $i$th band $B_i$ of $\Sigma_{\zz}$. Let $H_L$, $(\ld_k)_{k}$ and $(a_k)_{k}$ be defined as in Subsection \ref{Ss:equation}.
We enumerate the spectral data $\lambda_{k}$ and $a_{k}$ in $B_i$ as $(\lambda^{i}_{\ell})_{\ell}, (a^{i}_{\ell})_{\ell}$ with $0 \leq \ell \leq n_{i}$ (the (local) enumeration w.r.t. the band $B_i$ of $\Sigma_{\zz}$).\\
Let $I=[E_{0}, E_{0}+\ve_{1}]$ and $\cD=\left[E_0, E_0+\ve_1\right] -i[0, \ve_{2}]$ where $\ve_{1}\asymp \ve^{2}$ and $\ve_{2} \asymp \ve^{5}$ with $\ve>0$ small.
Then, we have 
\begin{enumerate}
\item For each eigenvalue $\lambda^{i}_{n}\in I$ of $H_{L}$, there exists a unique resonance $z_{n}$ in 
%$\cM_{n}=[\frac{\lambda^{i}_{n-1}+\lambda^{i}_{n}}{2}, \frac{\lambda^{i}_{n}+\lambda^{i}_{n+1}}{2}] - i \left[0, C_0 \frac{n+1}{L^2} \right]$ 
$\mathcal B_{n, \ve}=\left[\frac{\lambda^{i}_{n-1}+\lambda^{i}_{n}}{2}, \frac{\lambda^{i}_{n}+\lambda^{i}_{n+1}}{2}\right] -i \left[0, \ve^5 \right]$ with a convention that $\lambda^{i}_{-1}:=2E_{0}-\lambda_{0}$. Moreover, $z_{n} \in \cM_{n}=\left [\frac{\lambda^{i}_{n-1}+\lambda^{i}_{n}}{2}, \frac{\lambda^{i}_{n}+\lambda^{i}_{n+1}}{2}\right] - i \left[0, C_0 \frac{n+1}{L^2} \right]$ with $C_{0}>0$ large. Besides, there are no resonances in the rectangle $[E_{0}-\ve, E_{0}] - i \left[0, C_0 \frac{n+1}{L^2} \right]$. %with $C_{0}>0$ large.
\item Define $S^{i}_{n,L}(E) = S_{L}(E)- \frac{a^{i}_{n}}{\ld^{i}_{n}-E}$ and $\alpha_{n}= S^{i}_{n, L}(\ld^{i}_n)+e^{-i\theta(\ld^{i}_n)}$. Then, there exists $c_{0}>0$ s.t. $c_{0} \leq |\alpha_{n}| \lesssim \frac{1}{\ve^{2}}$ and  
\begin{equation}\label{eq:asymptotic*}
z_n =\ld^{i}_n +\frac{a^{i}_n}{\alpha_{n}} +O\left( \frac{(n+1)^4}{ L^5|\alpha_{n}|^{3}}\right).
\end{equation}
\item $\text{Im}z_{n}$ satisfies
\begin{equation}\label{eq:imEn*}
\text{Im}z_n =\frac{a^{i}_n \sin(\theta(\ld^{i}_n))}{\left|\alpha_{n}\right|^2} + O\left(\frac{(n+1)^4}{L^5 |\alpha_{n}|^{3}}\right).
\end{equation}
Consequently, there exists a large constant $C>0$ such that $ \frac{\ve^4 (n+1)^2}{C L^3} \leq  |\text{Im} z_n| \leq C \frac{(n+1)^{2}}{L^{3}}$. 
\end{enumerate}
\end{theorem}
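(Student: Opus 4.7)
The plan is to turn the resonance equation \eqref{eq:resN} into a fixed-point equation near each eigenvalue $\ld^i_n$ and analyze it by Rouch\'e's theorem together with a Taylor expansion, the quantitative inputs coming from the smoothness results of Section \ref{S:smoothness} and the generic behavior of the spectral data.

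\textbf{Step 1 (Reformulation).} Split off the pole at $\ld^i_n$: $S_L(E) = \frac{a^i_n}{\ld^i_n - E} + S^i_{n,L}(E)$. Then \eqref{eq:resN} is equivalent, on $\cB_{n,\ve}$ where $\ld^i_n - E \ne 0$ except at $\ld^i_n$, to the fixed-point equation
\[ E \;=\; \ld^i_n + \frac{a^i_n}{e^{-i\theta(E)} + S^i_{n,L}(E)} \;=:\; F_n(E). \]
Observe that $F_n(\ld^i_n) = \ld^i_n + a^i_n/\alpha_n$, which is the prediction \eqref{eq:asymptotic*}.

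\textbf{Step 2 (Control of $S^i_{n,L}+e^{-i\theta}$ on $\cB_{n,\ve}$).} Using the generic asymptotic $a^i_\ell \asymp (\ld^i_\ell - E_0)/L$ and the square-root-type eigenvalue spacing $\ld^i_\ell - E_0 \asymp (\ell+1)^2/L^2$ valid at the band edge, I would derive the two-sided bound $c_0 \le |\alpha_n| \lesssim \ve^{-2}$ claimed in the theorem by splitting the sum $\alpha_n - e^{-i\theta(\ld^i_n)} = \sum_{\ell\ne n} a^i_\ell/(\ld^i_\ell-\ld^i_n)$ into the neighbors $\ell$ close to $n$ (whose contribution controls the lower bound because of the telescoping-type cancellation) and the bulk $\ell$ far from $n$ (bounded above using $\ld^i_\ell - \ld^i_n \gtrsim (\ell^2 - n^2)/L^2$). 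Combined with the smoothness results of Section \ref{S:smoothness} for $S^i_{n,L}$ and $\theta$, one gets derivative estimates $|(S^i_{n,L}+e^{-i\theta})'(E)| \lesssim L^2/(n+1)^2$ on $\cB_{n,\ve}$, hence
\[ e^{-i\theta(E)}+S^i_{n,L}(E) \;=\; \alpha_n \;+\; O\!\bigl((E-\ld^i_n)\,L^2/(n+1)^2\bigr) \]
uniformly on $\cB_{n,\ve}$, where the error is $\lesssim 1$ since $|E-\ld^i_n| \lesssim (n+1)^2/L^2$.

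\textbf{Step 3 (Rouch\'e and uniqueness).} On $\partial \cB_{n,\ve}$ I compare $g(E) := S_L(E)+e^{-i\theta(E)}$ with the model $g_0(E) := \frac{a^i_n}{\ld^i_n - E} + \alpha_n$. Since $|g - g_0| \lesssim |E - \ld^i_n|\cdot L^2/(n+1)^2 \cdot 1$ and $|g_0|$ is quantitatively large on $\partial\cB_{n,\ve}$ (the nearest zero of $g_0$ is $\ld^i_n + a^i_n/\alpha_n$, well inside the box), Rouch\'e's theorem yields exactly one zero $z_n$ of $g$ in $\cB_{n,\ve}$. The refined localization $z_n \in \cM_n$ comes from plugging the crude bound $|z_n - \ld^i_n| \lesssim a^i_n/|\alpha_n|$ back into $F_n$ and using $a^i_n \asymp (n+1)/L^2$ together with the lower bound on $|\alpha_n|$.

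\textbf{Step 4 (Asymptotic formula and imaginary part).} A second pass through the fixed-point equation, writing
\[ z_n - \ld^i_n \;=\; \frac{a^i_n}{\alpha_n + (z_n - \ld^i_n)\beta_n + O((z_n-\ld^i_n)^2 \gamma_n)}, \]
with $|\beta_n|,|\gamma_n|$ controlled by Step 2, and substituting $z_n - \ld^i_n \asymp a^i_n/\alpha_n \asymp (n+1)/(L^2|\alpha_n|)$, produces the error $O\!\bigl((n+1)^4/(L^5|\alpha_n|^3)\bigr)$ of \eqref{eq:asymptotic*}. Since $S^i_{n,L}(\ld^i_n) \in \rr$, we have $\operatorname{Im} \alpha_n = -\sin\theta(\ld^i_n)$, so taking imaginary parts of \eqref{eq:asymptotic*} gives \eqref{eq:imEn*}; the two-sided bound on $|\operatorname{Im} z_n|$ then follows from $a^i_n \asymp (n+1)/L^2$, $|\sin\theta(\ld^i_n)| \asymp \sqrt{\ld^i_n - E_0} \asymp (n+1)/L$ (here the $\ve^4$ comes in through the upper bound on $|\alpha_n|$), and the lower/upper bounds on $|\alpha_n|$. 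For the absence of resonances in $[E_0-\ve,E_0] - i[0, C_0(n+1)/L^2]$, I would bound $|S_L(E)|$ on this rectangle using the generic decay of $a^i_\ell$ (all poles $\ld^i_\ell$ are $\ge E_0$, so $|\ld^i_\ell - E| \gtrsim \ld^i_\ell - E_0$ and the sum $|S_L|$ stays bounded by a constant), while $|e^{-i\theta(E)}|$ stays uniformly bigger than that constant outside a small neighborhood of $E_0$, contradicting \eqref{eq:resN}.

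The main obstacle is the quantitative bookkeeping in Step 2: one needs simultaneous control of $\alpha_n$ and of the first two derivatives of $S^i_{n,L}+e^{-i\theta}$ with precisely the powers of $(n+1)/L$ that feed the error term $(n+1)^4/(L^5|\alpha_n|^3)$. This relies essentially on the smoothness of the eigenvalues and eigenvector norms at the band edge (Section \ref{S:smoothness}) and on sharp sum estimates of the form $\sum_{\ell\ne n} a^i_\ell/|\ld^i_\ell-\ld^i_n|^k$ in the generic regime, which together encode the square-root Van Hove behavior of the density of states at $\partial\Sigma_\zz$.
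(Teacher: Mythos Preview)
Your overall strategy (freeze $S^i_{n,L}+e^{-i\theta}$ at $\ld^i_n$, compare with the model $g_0(E)=\frac{a^i_n}{\ld^i_n-E}+\alpha_n$ via Rouch\'e, then iterate) is the paper's strategy. But there is a genuine structural gap in Step~3, and several of your quantitative inputs are off.

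\textbf{The Rouch\'e argument on $\cB_{n,\ve}$ fails as written.} On $\cB_{n,\ve}$ the imaginary part of $E$ ranges up to $\ve^5$, so for small $n$ one has $|E-\ld^i_n|\lesssim\ve^5$, not $(n+1)^2/L^2$ as you claim. Moreover the Lipschitz constant of $S^i_{n,L}$ near $\ld^i_n$ is $\asymp L$, not $L^2/(n+1)^2$: already the single nearest-neighbour term contributes $a^i_{n+1}/(\ld^i_{n+1}-\ld^i_n)^2\asymp\frac{(n+1)^2/L^3}{((n+1)/L^2)^2}=L$. Hence on the bottom edge of $\cB_{n,\ve}$ one only gets $|g-g_0|\lesssim L\ve^5$, which blows up, and Rouch\'e gives nothing. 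The paper deals with this by a separate argument (Lemma~\ref{L:lesson}): on the lower rectangle $\cA_{n,\ve}=\cB_{n,\ve}\setminus\cM_n$ one shows directly that $|\mathrm{Im}\,S_L(E)|\lesssim\ve$ while $|\mathrm{Im}\,e^{-i\theta(E)}|\gtrsim 1$, so there are no resonances there. Rouch\'e is then applied only on the short box $\cM_n$, where $|E-\ld^i_n|\lesssim(n+1)/L^2$ and the difference $|g-g_0|\lesssim L\cdot(n+1)/L^2=(n+1)/L\lesssim\ve$ is indeed dominated by $|\mathrm{Im}\,g_0|\gtrsim 1$ on $\partial\cM_n$. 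Without this splitting your existence/uniqueness claim on $\cB_{n,\ve}$ is unjustified.

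\textbf{Several asymptotics are wrong (and two of them cancel by accident).} In the generic case $a^i_n\asymp|\ld^i_n-E_0|/L\asymp(n+1)^2/L^3$, not $(n+1)/L^2$. And $\theta(E)=\arccos(E/2)$ is the \emph{free} phase: since $E_0\in(-2,2)$ is assumed far from $\pm 2$, one has $|\sin\theta(\ld^i_n)|\asymp 1$, not $\asymp(n+1)/L$ (you are confusing $\theta$ with the periodic quasi-momentum $\theta_p$, for which $E_0$ is a square-root branch point). Your two errors happen to multiply to the correct $(n+1)^2/L^3$ in the $\mathrm{Im}\,z_n$ estimate, but the reasoning is incorrect. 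Relatedly, the lower bound $|\alpha_n|\ge c_0$ is immediate from $\mathrm{Im}\,\alpha_n=-\sin\theta(\ld^i_n)$ (which is real and bounded away from $0$), not from any cancellation in the sum; the nontrivial part is the \emph{upper} bound $|S^i_{n,L}(\ld^i_n)|\lesssim\ve^{-2}$, which requires the symmetrization $\sum_{k\ne n}\frac{a_k-a_n}{\ld_k-\ld_n}+a_n\sum_{k\ne n}\frac{1}{\ld_k-\ld_n}$ together with the $C^1$ regularity of $k\mapsto a_k$ and the $C^2$ regularity of $k\mapsto\ld_k$ at the band edge (Lemmata~\ref{L:smoothness} and~\ref{L:nicefunction}).

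\textbf{The ``no resonances to the left of $E_0$'' argument is incomplete.} Your claim that $|e^{-i\theta(E)}|$ dominates $|S_L(E)|$ there does not work: $|e^{-i\theta(E)}|\approx 1$ near the real axis and $|S_L(E)|$ is also of order~$1$. The paper instead runs the Rouch\'e comparison on the enlarged box $[E_0-\ve,\tfrac{\ld^i_0+\ld^i_1}{2}]-i[0,C_0/L^2]$, obtains a unique zero, and then locates it within $O(1/L^3)$ of $\ld^i_0$, hence to the right of $E_0$.
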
 
The above theorem calls for a few comments. First of all, near $\partial \Sigma_{\zz}$, each eigenvalue $\ld^{i}_n \in \ringz$ generates a unique resonance $z_n$ which can be described by the formula \eqref{eq:asymptotic*}. Moreover, $|\text{Im} z_n| \asymp \frac{n^2}{L^3}$. Hence, when $n \asymp \ve L$ (corresponding to resonances far from the boundary), $|\text{Im}z_{n}|$ is of order $\frac{1}{L}$ as stated in \cite[Proposition 1.4]{klopp13}. However, when $n$ is small i.e. $\ld^{i}_{n}$ is close to the boundary of the spectrum, $|\text{Im}z_{n}|$ becomes much smaller. Precisely, the magnitude of the width of resonances varies from $\frac{1}{L^{3}}$ to $\frac{1}{L}$. Combining our result with the description of resonances in \cite{klopp13}, we see that, below each band of $\Sigma_{\zz}$, resonances closest to the real axis form a nice curve whose shape is completely determined by the behavior of spectral parameters.
\begin{figure}[t]
\centering
\begin{tikzpicture}[line cap=round,line join=round,x=0.5cm,y=0.5cm]
\clip(-1.39,-1.0) rectangle (13.79, 9.13);
\draw (2,6)-- (10,6);
\draw (10,6)-- (10,4);
\draw (10,4)-- (2,4);
\draw (2,4)-- (2,6);
\draw (2,0)-- (10,0);
\draw (10,0)-- (10,4);
\draw (2,4)-- (2,0);
\draw (0.13,8.0) node[anchor=north west] {$\frac{\lambda^{i}_{n-1}+\lambda^{i}_n}{2}$};
\draw (8.12,8.0) node[anchor=north west] {$\frac{\lambda^{i}_n+\lambda^{i}_{n+1}}{2}$};
\draw (5.12,7.38) node[anchor=north west] {$\lambda^{i}_n$};
\draw (10.13,4.41) node[anchor=north west] {$-C_0 \frac{n+1}{L^2}$};
\draw (10.23,0.9) node[anchor=north west] {$-\varepsilon^5$};
\draw (6.0,5.42) node[anchor=north west] {$\mathcal M_n$};
\draw (6.11,2.41) node[anchor=north west] {$\mathcal A_{n,\varepsilon}$};
\begin{scriptsize}
\fill [] (2,6) circle (1.5pt);
\draw[] (2.36,6.3) node {$A$};
\fill [] (10,6) circle (1.5pt);
\draw[] (10.3,6.3) node {$B$};
\fill [] (10,4) circle (1.5pt);
\draw[] (10.34,4.59) node {$C$};
\fill [] (2,4) circle (1.5pt);
\draw[] (2.36,4.59) node {$D$};
\fill [] (2,0) circle (1.5pt);
\draw[] (2.32,0.57) node {$E$};
\fill [] (10,0) circle (1.5pt);
\draw[] (10.3,0.57) node {$F$};
\fill [] (6,6) circle (1.5pt);
\end{scriptsize}
\end{tikzpicture}
\caption[]{ Rectangle $\cB_{n,\ve}$}
\label{F:f5}
\end{figure}
Next, we turn our attention to resonances below $\rr \backslash \Sigma_{\nn}$. Recall that $\Sigma_{\nn}$, the spectrum of $H^{\nn}$, is the union of $\Sigma_{\zz}$ and isolated simple eigenvalues $\{v_{\ell}\}_{\ell}$ of the operator $H^{\nn}$.

Let $I$ be a compact interval in $(-2, 2)$ and $I \subset \rr \backslash \Sigma_{\nn}$. Then, by \cite[Theorem 1.2]{klopp13}, $H_{L}^{\nn}$ has no resonances in the rectangle $I-i[0, c]$ for some constant $c>0$. In other words, there exists a resonance free region of width at least of order $1$ below the compact interval $I$. This result is a direct consequence of the fact that $\text{dist}(E, \sigma(H_{L}))$ and $|\text{Im} e^{-i \theta(E)}|$ are lower bounded by a positive constant for $E\in I \subset (\rr \backslash \Sigma_{\nn}) \cap (-2, 2)$.\\
In this paper, we extend the above result for compact intervals $I$ which meet the boundary of $\Sigma_{\zz}$. 
\begin{theorem}\label{T:outside}
Let $E_{0} \in (-2, 2)$ be the left endpoint of the $i$-th band $B_{i}$ of $\Sigma_{\zz}$. Pick $L\in \nn^{*}$ large.
Then, $H^{\nn}_{L}$ has no resonances in the rectangle $[E_{0}-\ve, E_{0}]- i[0, \ve^{5}]$ with $\ve$ sufficiently small. 
\end{theorem}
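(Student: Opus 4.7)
\textbf{Proof plan for Theorem \ref{T:outside}.} I would prove this by an imaginary-part mismatch argument: show that throughout the rectangle $[E_0-\varepsilon,E_0]-i[0,\varepsilon^5]$, one has $|\text{Im}\,S_L(E)|\leq C\varepsilon^{5/2}$ while $|\text{Im}(-e^{-i\theta(E)})|\geq c_0>0$ uniformly in $L$, so the two sides of the resonance equation \eqref{eq:resN} cannot match for $\varepsilon$ small. Write $E=E_0-s+i\tau$ with $s\in[0,\varepsilon]$ and $\tau\in[-\varepsilon^5,0]$. Note that in the generic case, for $L$ large, $E_0\notin\sigma(H_L)$, every eigenvalue of $H_L$ in $B_i$ satisfies $\lambda^i_\ell\geq E_0$, and the eigenvalues in other bands (as well as the isolated eigenvalues $v_\ell$ of $H^{\nn}$) stay at a positive distance from $E_0$ independent of $L$ by the band/gap structure of $\Sigma_{\zz}$.

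The lower bound on the right-hand side follows from the identity $\text{Im}(-e^{-i\theta(E)})=e^{\text{Im}\theta(E)}\sin(\text{Re}\theta(E))$ together with the analyticity of $\theta$ on a neighborhood of $E_0$: since $E_0\in(-2,2)$, the value $\theta(E_0)\in(-\pi,0)$ is bounded away from $\{0,-\pi\}$, so $|\sin\theta(E_0)|\geq 2c_0>0$, and by continuity $|\text{Im}(-e^{-i\theta(E)})|\geq c_0$ on the rectangle. For the upper bound, I use \eqref{eq:easyfact} to write $|\text{Im}\,S_L(E)|=|\tau|\sum_k a_k/|\lambda_k-E|^2$ and split the sum according to $\lambda_k$ lying outside a fixed small neighborhood of $E_0$ (whose contribution is $O(1)$ because $|\lambda_k-E|$ is bounded below and $\sum_k a_k\leq 1$) versus $\lambda^i_\ell\in B_i$ approaching $E_0$. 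For the latter, the generic-case asymptotics $\lambda^i_\ell-E_0\asymp(\ell/L)^2$ and $a^i_\ell\asymp(\lambda^i_\ell-E_0)/L$ allow a Riemann-sum comparison with
\begin{equation*}
\int_0^{\infty}\frac{\sqrt{y}\,dy}{(y+s)^2+\tau^2}\,\lesssim\,\frac{1}{\sqrt{\max(s,|\tau|)}},
\end{equation*}
the last estimate coming from splitting the integral at $y\sim\max(s,|\tau|)$. Combining, $|\text{Im}\,S_L(E)|\lesssim |\tau|/\sqrt{\max(s,|\tau|)}\leq\sqrt{|\tau|}\leq\varepsilon^{5/2}$.

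For $\varepsilon$ sufficiently small this is strictly smaller than $c_0$, so $\text{Im}\,S_L(E)\neq\text{Im}(-e^{-i\theta(E)})$ throughout the rectangle, forbidding resonances. The main obstacle I expect is making the Riemann-sum comparison fully quantitative and uniform in $L$: one needs the generic-case asymptotic expansions of $\lambda^i_\ell$ and $a^i_\ell$ with error terms controlled uniformly in $\ell$, so that those errors do not destroy the $1/\sqrt{\max(s,|\tau|)}$ bound on the singular sum. This is precisely the content of the spectral smoothness results developed in Section \ref{S:smoothness}, so the analytical ingredients are at hand and the remainder of the argument is a careful bookkeeping of these estimates.
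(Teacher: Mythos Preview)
Your approach is correct and more direct than the paper's. The paper splits the rectangle at height $C_0/L^2$: for the thin strip $[E_0-\varepsilon,E_0]-i[0,C_0/L^2]$ it does not estimate $\text{Im}\,S_L$ at all but instead invokes point~(1) of Theorem~\ref{T:maingeneric}, where a Rouch\'e argument on a larger box locates the unique nearby resonance close to $\lambda^i_0$ (hence to the right of $E_0$); for the remaining region $[E_0-\varepsilon,E_0]-i[C_0/L^2,\varepsilon^5]$ it uses harmonicity of $-\text{Im}\,S_L$ and the maximum principle to reduce to the four boundary segments, which it checks one by one, obtaining only $|\text{Im}\,S_L|\lesssim\varepsilon$. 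Your pointwise bound $|\text{Im}\,S_L(E)|\lesssim|\tau|/\sqrt{\max(s,|\tau|)}\leq\varepsilon^{5/2}$ holds throughout the whole rectangle and bypasses both the Rouch\'e input and the maximum principle. The step you flag as the main obstacle is not really one: the discrete sum $\sum_\ell a^i_\ell/|\lambda^i_\ell-E|^2$ can be bounded directly, without any Riemann-sum comparison, by splitting at $(\ell+1)^2/L^2\sim\max(s,|\tau|)$ and using only the coarse asymptotics $\lambda^i_\ell-E_0\asymp(\ell+1)^2/L^2$ and $a^i_\ell\asymp(\ell+1)^2/L^3$ from Lemma~\ref{L:asymptoticformula} and Remark~\ref{R:nearboundary}; the finer $C^1$ regularity of Section~\ref{S:smoothness} is not needed here. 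What the paper's route buys is that the thin-strip part comes for free once Theorem~\ref{T:maingeneric} is proved; what yours buys is a self-contained argument with a sharper bound and no appeal to that theorem. One minor correction: the eigenvalues of $H_L$ outside $\Sigma_{\zz}$ cluster near points of $\Sigma_0^+\cup\Sigma_j^-\setminus\Sigma_{\zz}$ (Theorem~\ref{T:spectraldata}), not the $v_\ell$'s of $H^{\nn}$, but these limit points are still isolated away from $E_0$, so your far-contribution estimate stands unchanged.
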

Our paper is organized as follows. For each $\lambda^{i}_{n} \in I=[E_{0}, E_{0}+\ve_{1}]$ with $\ve_{1}\asymp \ve^{2}$, we study the resonance equation \eqref{eq:resN} in the rectangle\\ 
$\mathcal B_{n, \ve}=\left [\frac{\lambda^{i}_{n-1}+\lambda^{i}_{n}}{2}, \frac{\lambda^{i}_{n}+\lambda^{i}_{n+1}}{2}\right] -i \left[0, \ve^5 \right]$ (see Figure \ref{F:f5}). First of all, in Section \ref{S:sd}, we remind readers of the description of spectral data $\{\ld_{k}\}$ and $ \{a_{k}\}$ appeared in \cite{klopp13}. Next, we study the behavior of spectral data near the boundary point $E_0$ in Section \ref{S:smoothness}. Then, in Section \ref{S:sip}, we show that, in $\cA_{n,\ve}$ where $C_{0}\frac{n+1}{L^2} \leq |\text{Im}E| \leq \ve^5$ with $C_{0}>0$ sufficiently large, $|\text{Im}S_L(E)|$ will be too small. As a result, there are no resonances in  $\mathcal A_{n, \ve}$. After that, we will state the proof of Theorem \ref{T:maingeneric} in Section \ref{S:proofmain}. Finally, in the last section, Section \ref{S:proofmain1}, a proof for Theorem \ref{T:outside} is given.

\textbf{Notations:} Throughout the present paper, we will write $C$ for constants whose values can vary from line to line. Constants marked $C_{i}$ are fixed within a given argument. We write $a\lesssim b$ if there exists some $C>0$ independent of parameters coming into $a, b$ s.t. $a \leq Cb$. Finally, $a\asymp b$ means $a\lesssim b$ and $b\lesssim a$. 
\section{Spectral data-what are already known}\label{S:sd}
From \eqref{eq:resN}, resonances of $H_{L}^{\nn}$ depend only on the spectral data of the operator $H_{L}$ i.e., the eigenvalues and corresponding normalized eigenvectors of $H_{L}$. In order to "resolve" the resonance equation \eqref{eq:resN}, it is essential to understand how eigenvalues of $H_L$ behave and what the magnitudes of $a_l:=|\varphi_l(L)|^2$ are in the limit $L \rightarrow +\infty$.\\  
Before stating the properties of spectral data of $H_L$, one defines the \textit{quasi-momentum} of $H^{\mathbb Z}$:\\
Let $V$ be a periodic potential of period $p$ and $L$ be large. For $0\leq k\leq p-1$, one defines $\widetilde{T}_k=\widetilde{T}_k(E)$ to be a monodromy matrix for the periodic finite difference operators $H^{\mathbb Z}$, that is,
\begin{equation}\label{eq:monodromy}
\widetilde{T}_k(E)= T_{k+p-1, k}(E)=T_{k+p-1}(E)\ldots T_k(E)=
\begin{pmatrix}
 a^k_p(E) &a^k_p(E)\\
 a^k_{p-1}(E) &a^k_{p-1}(E)
\end{pmatrix}   
\end{equation}   
where $\{T_l(E)\}$ are transfer matrices of $H^{\mathbb Z}$: 
\begin{equation}\label{eq:transfer}
T_l(E)= 
\begin{pmatrix}
 E-V_l &-1\\
 1 &0
\end{pmatrix}.   
\end{equation}   
Besides, for $k\in \{0, \ldots, p-1\}$ we write
\begin{equation}\label{eq:product}
T_{k-1}(E\ldots T_0(E)= \begin{pmatrix}
a_k(E) & b_k(E)\\
 a_{k-1}(E) & b_{k-1}(E)
\end{pmatrix}.
\end{equation}
We observe that the coefficients of $\widetilde{T}_k(E)$ are monic polynomials in $E$. Moreover, $a^k_p(E)$ has degree $p$ and $b^k_p(E)$ has a degree $p-1$. The determinant of $T_l(E)$ equals to $1$ for any $l$, hence, $\det \widetilde{T}_k(E)=1$. Besides, $k \mapsto \widetilde{T}_k(E)$ is $p-$periodic since $V$ is a $p-$periodic potential. Moreover, for $j<k$
$$\widetilde{T}_k(E)= T_{k,j}(E) \widetilde{T}_j(E) T^{-1}_{k,j}(E).$$
Thus the discriminant $\Delta(E):= \text{tr} \widetilde{T}_k(E)=a^k_p(E)+b^k_{p-1}(E)$ is independent of $k$ and so are $\rho(E)$ and $\rho(E)^{-1}$, eigenvalues of $\widetilde{T}_k(E)$. Now, one can define the Floquet \textit{quasi-momentum} $E \mapsto \theta_p(E)$ by 
\begin{equation}\label{eq:2.5}
\Delta(E)=\rho(E)+\rho^{-1}(E)= 2\cos\left(p \theta_p(E)\right).
\end{equation}
Then, one can show that the spectrum of $H_{\nn}$, $\Sigma_{\mathbb Z}$, is the set $\{ E | |\Delta(E)|\leq 2 \}$ and $$\partial \Sigma_{\mathbb Z}=\{ E | |\Delta(E)|=2 \text{ and $\widetilde{T}_{0}(E)$ is not diagonal}\}.$$ 
Note that each point of $\partial \Sigma_{\zz}$ is a branch point of $\theta_p(E)$ of square-root type.
\begin{figure}[H]
\begin{center}
\begin{tikzpicture}[line cap=round,line join=round,x=0.67 cm,y=0.67 cm]
\clip(-2.76,-2.14) rectangle (6.47,4.08);
\draw[smooth,samples=100,domain=-2.758029237549914:6.47493127408347] plot(\x,{2*sin(((\x)+1.5)*180/pi)+0.09});
\draw (-4,2)-- (-4,0);
\draw [domain=-2.76:6.47] plot(\x,{(--2-0*\x)/2});
\draw (-4,0)-- (-4,-2);
\draw [domain=-2.76:6.47] plot(\x,{(-2-0*\x)/2});
\draw [line width=2.8pt] (-2.07,0)-- (-1.06,0);
\draw [line width=2.8pt] (1.17,0)-- (2.22,0);
\draw [line width=2.8pt] (4.21,0)-- (5.26,0);
\draw[->, >=latex] (0,-6)-- (0, 4.08);
\draw[->, >=latex](-6,0)-- (9,0);
\draw [->, >=latex] (5.26,0) -- (6.47,0);
\draw (0.06,4.21) node[anchor=north west] {$ \Delta(E) $};
\draw (5.71,0.04) node[anchor=north west] {$ E $};
\draw (0.11,1.69) node[anchor=north west] {$ 2 $};
\draw (-0.2,-0.88) node[anchor=north west] {$-2$};
\begin{scriptsize}
\draw (-3.82,-0.68) node {$E$};
\fill [] (-1.06,0) circle (1.5pt);
\draw[] (-0.86,0.33) node {$E^{+}_1$};
\fill [] (-2.07,0) circle (1.5pt);
\draw[] (-1.94,0.33) node {$E^{-}_1$};
\fill [] (1.17,0) circle (1.5pt);
\draw[] (1.36,0.33) node {$E^{-}_2$};
\fill [] (2.22,0) circle (1.5pt);
\draw[] (2.4,0.33) node {$E^{+}_2$};
\fill [] (4.21,0) circle (1.5pt);
\draw[] (4.4,0.33) node {$E^{-}_3$};
\fill [] (5.26,0) circle (1.5pt);
\draw[] (5.46,0.33) node {$E^{+}_3$};
\fill [] (0,0) circle (1.5pt);
\draw[] (0.2,0.33) node {$O$};
\end{scriptsize}
\end{tikzpicture}
\caption[Figure2]{Function $\Delta(E)$.}
\label{F:fk2}
\end{center}
\end{figure}
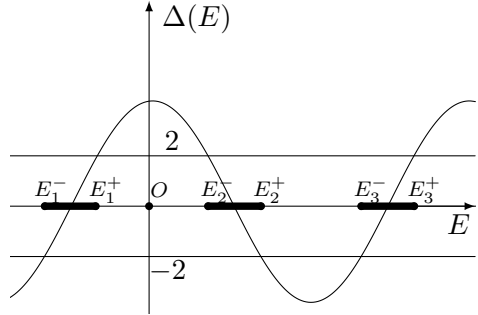
One decomposes $\Sigma_{\zz}$ into its connected components i.e.$\Sigma_{\zz}=\bigcup\limits_{i=1}^q B_i$ with $q<p$. Let $c_{i}$ be the number of closed gaps contained in $B_{i}$. Then, $\theta_{p}$ maps $B_{i}$ bijectively into $\sum\limits_{\ell=1}^{i-1} (1+c_{\ell}) \frac{\pi}{p} + \frac{\pi}{p}[0, c_{i}]$. Moreover, on this set, the derivative of $\theta_{p}$ is proportional to the common density of states $n(E)$ of $H^{\mathbb Z}$ and $H^{\mathbb N}$: 
$$ \theta'_p(E)= \pi n(E).$$
One has the following description for spectral data of $H_L$.
\begin{theorem}\cite[Theorem 4.2]{klopp13}\label{T:spectraldata}   
For any $j \in \{0, \ldots, p-1\},$ there exists $h_j: \Sigma_{\zz} \rightarrow \rr$ , a continuous function that is real analytic in a neighborhood of $\ringz$ such that, for $L=Np+j$,
\begin{enumerate}
\item The function $h_j$ maps $B_{i}$ into $\left(-(c_{i}+1)\pi, (c_{i}+1)\pi \right)$ where $c_i$ is the number of closed gaps in $B_i$;
\item the function $\theta_{p, L}= \theta_p - \frac{h_j}{L-j}$ is strictly monotonous on each band $B_{i}$ of $\Sigma_{\zz}$;
\item for $1\leq i \leq q$, the eigenvalues of $H_L$ in $B_{i}$, the $i$th band of $\Sigma_{\zz}$, says $(\lambda^{i}_k)_k$ are the solutions (in $\Sigma_{\zz}$) to the quantization condition
\begin{equation}\label{eq:quan1}
\theta_{p, L}(\lambda^i_k) =\dfrac{k \pi}{L-j}, \quad k\in \zz.
\end{equation}
\item If $\lambda$ is an eigenvalue of $H_L$ outside $\Sigma_{\zz}$ for $L=Np+j$ large, there exists $\lambda_{\infty} \in \Sigma^{+}_0 \cup \Sigma^{-}_j \backslash \Sigma_{\zz}$ s.t. $|\lambda-\lambda_{\infty}| \leq e^{-cL}$ with $c>0$ independent of $L$ and $\lambda$. 
\end{enumerate}
\end{theorem}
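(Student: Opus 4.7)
The plan is to reduce the Dirichlet eigenvalue problem for $H_L$ to a single scalar equation in $E$ via transfer matrices, and then to extract the quantization condition, the regularity of $h_j$, and the exponential localization outside $\Sigma_{\zz}$ from that scalar equation. A Dirichlet eigenfunction $\varphi$ of $H_L$ satisfies $\varphi(-1) = \varphi(L+1) = 0$, so normalizing $\varphi(0) = 1$ and writing $L = Np + j$, the $p$-periodicity of $V$ gives
$$\begin{pmatrix}\varphi(L+1)\\ \varphi(L)\end{pmatrix} = A_{j+1}(E)\,\widetilde{T}_0(E)^N\begin{pmatrix}1\\ 0\end{pmatrix},$$
where $A_{j+1}(E) := T_j(E)\cdots T_0(E)$ has first row $(a_{j+1}(E), b_{j+1}(E))$ as in \eqref{eq:product}. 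Thus $E$ is a Dirichlet eigenvalue of $H_L$ iff the top entry on the right vanishes.

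Inside a band $B_i \subset \ringz$, I would diagonalize
$$\widetilde{T}_0(E) = P(E)\,\mathrm{diag}\bigl(e^{ip\theta_p(E)},\, e^{-ip\theta_p(E)}\bigr)\,P(E)^{-1},$$
with $P(E)$ real-analytic on $\ringz$. Substituting converts the eigenvalue equation into $C(E)\,e^{iNp\theta_p(E)} + \overline{C(E)}\,e^{-iNp\theta_p(E)} = 0$, i.e.\ $\cos\bigl(Np\theta_p(E) + \arg C(E)\bigr) = 0$, where $C(E)$ is an explicit algebraic combination of the entries of $A_{j+1}(E)$ and $P(E)$. Setting $h_j(E) := \arg C(E) - \pi/2$ on the appropriate branch and using $L - j = Np$ recovers exactly \eqref{eq:quan1}, namely $\theta_{p,L}(\lambda^i_k) = k\pi/(L-j)$. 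The allowed window $(-(c_i+1)\pi,(c_i+1)\pi)$ for $h_j$ on $B_i$ reflects the combinatorics of the branch choices of $\theta_p$ across the $c_i$ closed gaps contained in $B_i$.

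Real-analyticity of $h_j$ in a neighborhood of $\ringz$ follows from analyticity of $P(E)$, $A_{j+1}(E)$, and $\rho(E)$ there, while continuity up to $\partial\Sigma_{\zz}$ comes from the explicit asymptotic behavior of $C(E)$ at the band edges. Strict monotonicity of $\theta_{p,L} = \theta_p - h_j/(L-j)$ on $B_i$ then follows from $\theta_p' = \pi n(E) > 0$ on $\ringz$, together with the $C^1$ estimate $h_j'/(L-j) = O(1/L)$, which is dominated by $\theta_p'$ on compact subsets for $L$ large. For statement (4), when $E$ lies in a gap of $\Sigma_{\zz}$, the eigenvalues of $\widetilde{T}_0(E)$ are $\rho(E)^{\pm 1}$ with $|\rho(E)| > 1$ and the scalar equation takes the form $\rho(E)^N \alpha(E) + \rho(E)^{-N}\beta(E) = 0$, i.e.\ $\alpha(E) = -\rho(E)^{-2N}\beta(E)$. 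The zeros of $\alpha$ coincide with the eigenvalues of the half-line operators obtained from the two relevant truncations of $V$ and are identified with $\Sigma^{+}_0 \cup \Sigma^{-}_j \backslash \Sigma_{\zz}$; a Rouch\'e argument combined with $|\rho(E)^{-2N}| \le e^{-cL}$ on compact gap subsets then yields the exponential closeness.

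The main technical obstacle I anticipate is the global branch-tracking of $h_j$ in the second step: one must choose the branches of $\arg C(E)$ compatibly with those of $\theta_p(E)$ across each closed gap contained in $B_i$, so that $h_j$ is continuous on the whole band and its range is exactly $(-(c_i+1)\pi,(c_i+1)\pi)$. Once this bookkeeping is in place, the remainder reduces to standard diagonalization, perturbation theory for monotone smooth functions, and estimation of exponentially small corrections outside the bands.
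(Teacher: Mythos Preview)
The paper does not supply its own proof of this theorem: it is quoted verbatim from \cite[Theorem~4.2]{klopp13} and used as a background input, so your sketch is not being compared against anything in the present text. The strategy you outline---transfer-matrix reduction, diagonalization of the monodromy matrix on each band, extraction of the quantization condition from $\cos\bigl(Np\,\theta_p(E)+\arg C(E)\bigr)=0$, and a Rouch\'e argument in the gaps for part~(4)---is the natural one for periodic Jacobi operators, and it is consistent with how the present paper manipulates $h_j$ (see the explicit formula for $e^{2ih_j(E)}$ just before Lemma~\ref{L:smoothness}).

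There is, however, one genuine gap in your monotonicity argument for statement~(2). You claim strict monotonicity of $\theta_{p,L}$ by saying that $h_j'/(L-j)=O(1/L)$ is dominated by $\theta_p'$ ``on compact subsets'' of $\ringz$. But $h_j'(E)$ is \emph{not} bounded on $B_i$: near a band edge $E_0$ one has $h_j'(E)=O\bigl(|E-E_0|^{-1/2}\bigr)$, the same square-root singularity as $\theta_p'(E)=\pi n(E)\sim c\,|E-E_0|^{-1/2}$ (this is worked out explicitly in the proof of Lemma~\ref{L:smoothness}). The correct argument is that the \emph{ratio} $h_j'/\theta_p'$ remains bounded up to the edge, so that
\[
\theta_{p,L}'(E)=\theta_p'(E)\Bigl(1-\tfrac{1}{L-j}\,\frac{h_j'(E)}{\theta_p'(E)}\Bigr)
\]
keeps the sign of $\theta_p'$ on the whole closed band $B_i$ once $L$ is large. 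Without this refinement your sketch does not deliver strict monotonicity on all of $B_i$, which is precisely what statement~(2) asserts and what the rest of the paper relies on (e.g.\ in Lemma~\ref{L:asymptoticformula}).
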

When solving the equation \eqref{eq:quan1}, one has to do it for each band $B_{i}$, and for each band and each $k$ such that $\frac{k \pi}{L-j} \in \theta_{p,L}(B_{i})$, \eqref{eq:quan1} admits a unique solution. But, it may happens that one has two solutions to \eqref{eq:quan1} for a given $k$ belonging to neighboring bands.\\
Following is the description of the associated eigenfunctions.
\begin{theorem}\cite[Theorem 4.2]{klopp13}\label{T:spectraldata1}
Recall that $(\lambda_{l})_{l}$ are the eigenvalues of $H_{L}$ in $\Sigma_{\zz}$ (enumerated as in Theorem \ref{T:spectraldata}).
\begin{enumerate}
\item There are $p+1$ positive functions, say, $f_0^{+}$ and $(f_j^{-})_{0 \leq j \leq p-1}$, that are 
real analytic in a neighborhood of $\ringz$ such that, for $E_0 \in \ringz$ such that $\theta_p(E_0) \in [\pi j/p, \pi(j+1)/p )$, there exists $\mathcal V$, a complex neighborhood of $E_0$, real constants $m_p,L$, $\kappa^0_{p,L}$ and $\kappa^j_{p,L}$ and a function $\Xi_{p,L}$ real analytic $\mathcal V$ such that, for $L=Np+k$ sufficiently large, for $\lambda_k \in \mathcal V$, one has
\begin{align} \label{eq:crazy}
|\varphi_l(0)|^2 = \frac{f_0(\lambda_l)}{L-k} F_{p,L,j_0}(\lambda_l), \qquad |\varphi_l(L)|^2= \frac{f_j(\lambda_l)}{f_0(\lambda_l)} |\varphi_l(0)|^2,\notag\\
\varphi_l(L)\overline{\varphi(0)}= \pm e^{i(L-j)\theta_{p,L}(\lambda_l)} \frac{\sqrt{f_0(\lambda_l)f_j(\lambda_l)}}{L-j} F_{p,L,j_0}(\lambda_j),\\
F_{p,L,j_0}(\lambda)= 1+\kappa g_{m_{p,L}-\pi j}((L-j)\theta_{p,L}(\lambda_l))+\frac{\Xi_{p,L}(\lambda)}{L-j} , \notag
\end{align}
where 
\begin{itemize}
	\item[$\bullet$] the constant $\kappa_{p,L}$ vanishes except if $E_0$ is a bad closed gap;
	\item[$\bullet$] for $m\ne \pi \zz$, the entire function $g_m$ is given by 
	$$g_m(z)=\frac{\sin^2(\pi z+m)}{(\pi z +m)\sin^2 m};$$
	\item[$\bullet$] for some $m_j(E_0) \in (-\pi, 0)$ and $\kappa_j(E_0) \in \rr$
	$$m_{p,L}=m_j(E_0)+ O\left(\frac{1}{L-j} \right) \text{ and } \kappa_{p,L}=\kappa_j(E_0)+O\left(\frac{1}{L-j} \right);$$
	\item[$\bullet$] the sign $\pm$ is constant on every band of the spectrum $\Sigma_{\zz}$.
\end{itemize} 
\item Let $\lambda$ be an eigenvalue outside $\Sigma_{\zz}$ and $\varphi$ is a normalized associated eigenvector, one has one of the following alternatives for $L=Np+j$ large 
\begin{itemize}
\item[$\bullet$] If $\lambda_{\infty} \in \Sigma_{\nn}\backslash \Sigma^{-}_j$, one has 
%\begin{equation}\label{eq:9.2}
$$|\varphi(L)| \asymp e^{-cL} \text{ and } |\varphi(0)| \asymp 1;$$
%\end{equation}
\item[$\bullet$] If $\lambda_{\infty} \in \Sigma^{-}_j\backslash \Sigma_{\nn}$, one has 
%\begin{equation}\label{eq:9.3}
$$|\varphi(L)| \asymp 1 \text{ and } |\varphi(0)| \asymp e^{-cL};$$
\item[$\bullet$] If $\lambda_{\infty} \in \Sigma_j^{-}\cap \Sigma_{\nn}$, one has 
%\begin{equation}\label{eq:9.4}
$$ |\varphi(L)| \asymp 1 \text{ and } |\varphi(0)| \asymp 1.$$
%\end{equation}
\end{itemize}
Let $\mathcal B_{j}$ be the set of bad closed gaps for $j$ (c.f. \cite[Proposition 4.1]{klopp13} for more details). Then, there exists $\tilde{f}$ real analytic on $\ringz \backslash \mathcal B_{j}$ such that, for eigenvalues, on this set, in \eqref{eq:crazy}, one can take
$$F_{p,L,j_{0}}(\lambda) =  1+ \frac{\Xi_{p,L}(\lambda)}{L-j} = \left(1+ \frac{\tilde{f}(\lambda)}{L-j} \right)^{-1}.$$ 
\end{enumerate}
\end{theorem}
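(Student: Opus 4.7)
The plan is to construct normalized eigenfunctions of $H_L$ from Bloch solutions of the periodic problem and then extract their boundary values at $0$ and $L$ via the transfer-matrix formalism introduced in Section~\ref{S:sd}. For $\lambda\in\ringz$ with $\theta_p(\lambda)\in[\pi j_0/p,\pi(j_0+1)/p)$, the monodromy $\widetilde{T}_0(\lambda)$ has two Bloch eigenvalues $e^{\pm ip\theta_p(\lambda)}$ with corresponding $\zz$-Bloch solutions $\psi_\pm$. Any eigenfunction $\varphi_l$ of $H_L$ satisfying the Dirichlet conditions $\varphi_l(-1)=\varphi_l(L+1)=0$ is, on each fundamental period, a real linear combination of $\psi_\pm$; the condition at $-1$ pins down the combination up to a scalar, while the condition at $L+1=Np+j+1$ is precisely the quantization equation \eqref{eq:quan1} of Theorem~\ref{T:spectraldata}. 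The $1/(L-j)$ mismatch between $\theta_{p,L}$ and $\theta_p$ is encoded by the correction $h_j$, which accounts for the extra $j$ transfer matrices acting beyond the last full period.

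With the eigenfunction so parametrized, \eqref{eq:crazy} becomes a normalization calculation. I would expand $|\varphi_l(n)|^2$ as a slowly varying Bloch amplitude times an oscillating factor in $n\theta_p(\lambda_l)$; summing over one period kills the oscillation, so $\sum_{n=0}^{L-1}|\varphi_l(n)|^2=N\,I_0(\lambda_l)+(\text{period-}j\text{ remainder})$ where $I_0$ is an explicit single-period integral. A Wronskian identity identifies $I_0(\lambda)$ with the density of states $n(\lambda)=\theta_p'(\lambda)/\pi$, giving the leading $1/(L-j)$ term of $|\varphi_l(0)|^2$ with coefficient $f_0(\lambda_l)$. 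The factor $f_j(\lambda_l)/f_0(\lambda_l)$ in $|\varphi_l(L)|^2$ is produced by iterating the transfer matrices $j$ steps past the last full period, and the cross term $\varphi_l(L)\overline{\varphi_l(0)}$ comes from interfering the two Bloch components and carries the global phase $e^{i(L-j)\theta_{p,L}(\lambda_l)}$. The prefactor $F_{p,L,j_0}$ collects the $1/(L-j)$ remainders of this expansion: generically it reduces to $1+\tilde{f}(\lambda)/(L-j)$, but at a bad closed gap the two Bloch branches nearly collide, this remainder develops a near-pole controlled by the distance to the gap edge, and a stationary-phase style summation of this near-pole contribution produces the universal profile $g_{m_{p,L}-\pi j}$.

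For eigenvalues $\lambda$ outside $\Sigma_{\zz}$ the same transfer-matrix setup applies, but now $|\rho(\lambda)|\ne 1$, so the monodromy has one contracting and one expanding direction. A Dirichlet eigenvector is thus exponentially concentrated near one endpoint, and Theorem~\ref{T:spectraldata}(4) together with the location of $\lambda_\infty$ selects which one: $\lambda_\infty\in\Sigma_{\nn}\setminus\Sigma_j^-$ forces concentration near $0$ (so $|\varphi(L)|\asymp e^{-cL}$), $\lambda_\infty\in\Sigma_j^-\setminus\Sigma_{\nn}$ forces concentration near $L$, and in the resonant coincidence $\lambda_\infty\in\Sigma_j^-\cap\Sigma_{\nn}$ both endpoint amplitudes are of order one because constructive matching of the two evanescent half-space solutions becomes possible. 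The simplified form $F_{p,L,j_0}=(1+\tilde{f}/(L-j))^{-1}$ off $\mathcal{B}_j$ then follows from the vanishing of $\kappa_{p,L}$ away from bad closed gaps and a geometric resummation of the remainder $\Xi_{p,L}$.

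The main technical obstacle is the analysis near bad closed gaps: the linearization of the quantization condition degenerates there, the two Bloch branches mix nontrivially, and one must carefully identify which branch parametrizes which eigenvalue of $H_L$ in order to avoid double-counting in the enumeration $(\lambda_k^i)_k$. Producing the precise normalization of $g_m$ and pinning down the constants $m_j(E_0)$ and $\kappa_j(E_0)$ demands a second-order expansion of $\widetilde{T}_0$ at the gap edge together with a matching calculation between Bloch and evanescent representations across the closed gap.
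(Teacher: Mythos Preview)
The paper does not contain a proof of this theorem: it is quoted verbatim from \cite[Theorem~4.2]{klopp13} and used as a black box in Section~\ref{S:sd}. There is therefore nothing in the present paper to compare your proposal against.

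That said, your outline is broadly the right architecture for how such a result is established in \cite{klopp13}: write the Dirichlet eigenfunction as a linear combination of the two Bloch solutions, impose the boundary conditions to recover the quantization \eqref{eq:quan1} and fix the combination, then normalize by summing $|\varphi_l(n)|^2$ period by period so that the leading term is $N$ times a single-period quantity tied to the density of states, with the extra $j$ steps producing the ratio $f_j/f_0$. The identification of the bad-closed-gap correction with the profile $g_m$ and the trichotomy for eigenvalues outside $\Sigma_{\zz}$ via the contracting/expanding directions of the monodromy are also the correct mechanisms. What your sketch leaves genuinely open is exactly the part you flag as the obstacle: the second-order analysis at a bad closed gap that produces the specific constants $m_{p,L},\kappa_{p,L}$ and the precise form of $g_m$. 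That computation is the substantive content of the proof in \cite{klopp13}, and your proposal does not yet supply it; but for the purposes of the present paper the theorem is simply cited, so no further detail is required here.
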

\begin{remark} \label{R:nearboundary}
According to \cite[Section 4]{klopp13}, one has the following behavior of $a_k$ associated to $\lambda_k$ which is close to $\partial \Sigma_{\zz}$.\\ Let $E_0 \in \partial \Sigma_{\zz}$ and $L=Np+j$. One defines $d_{j+1}=a_{j+1}(E_0) (a^0_p(E_0) -\rho^{-1}(E_0))+b_{j+1}(E_0) a^0_{p-1}(E_0)$ where $a_{j+1}, b_{j+1}, a^0_p, a^0_{p-1}$ are polynomials defined in \eqref{eq:monodromy} and \eqref{eq:product}. Then, one distinguishes two cases:    
\begin{enumerate}
	\item If $a^0_{p-1}(E_0)=0$, then 
	$$a_k=|\varphi_k(L)|^2 \asymp \frac{|\lambda_k-E_0|}{L-j} \text{ and } |\varphi_k(0)|^2 \asymp \frac{1}{L-j}.$$
	\item If $a^0_{p-1}(E_0)\ne 0$, then
	\begin{itemize}
		\item if $d_{j+1}\ne 0$, one has 
		$$|\varphi_k(L)|^2 \asymp \frac{|\lambda_k-E_0|}{L-j} \text{ and } |\varphi_k(0)|^2 \asymp \frac{|\lambda_k-E_0|}{L-j}.$$
		\item if $d_{j+1}=0$, one has
		$$|\varphi_k(L)|^2 \asymp \frac{1}{L-j} \text{ and } |\varphi_k(0)|^2 \asymp \frac{|\lambda_k-E_0|}{L-j}.$$	
	\end{itemize}	
\end{enumerate}
\end{remark}
\section[Behavior of spectral data]{Behavior of spectral data near the boundary of the spectrum}\label{S:smoothness}
Let $B_{i}$ be a band of $\Sigma_{\zz}$ and $(\lambda^{i}_{n})_{n}$ be (distinct) eigenvalues of $H_{L}$ in $B_{i}$. Pick $E_{0}$ an endpoint of $B_{i}$ and $L=Np+j$.\\ 
Then, according to Theorems \ref{T:spectraldata} and \ref{T:spectraldata1}, the spectral data i.e., the eigenvalues $\lambda^{i}_{\ell} \in B_i $ and the associated $a^{i}_{\ell}$, close to $E_{0}$, can be represented as $\lambda^{i}_{\ell}=\theta^{-1}_{p,L} \left( \frac{\pi \ell}{L-j}\right)$ if $\frac{\pi \ell}{L-j} \in \theta_{p,L}(B_i)$ and $a^{i}_{\ell}= \frac{1}{L-j} q(\lambda^{i}_{\ell})$ for some function $q$. In the present section, we will study the smoothness of the two functions $\theta^{-1}_{p,L}$ and $q$ near $\theta_{p,L}(E_{0})$ and $E_{0}$ respectively. We will make use of the results in this section to prove a good upper bound for the sum $S^{i}_{n,L}(\lambda^{i}_{n})$ defined in Theorem \ref{T:maingeneric} for each $\ld^{i}_{n} \in B_{i}$. Such an estimate plays an important role in describing the exact magnitude of the imaginary part of resonances near the boundary of $\Sigma_{\zz}$. Finally, we will study the behavior of eigenvalues in $B_{i}$ and close to the boundary point $E_{0}$.\\
Recall that, for $L=Np +j$, $\theta_{p,L}(E)= \theta_{p}(E)- \frac{h_{j}(E)}{L-j}$ where $\theta_{p}(E)$ is the quasi-momentum of $H^{\zz}$; $h_{j}(E)$ is analytic in $\ringz$ and satisfies the following relation
$$e^{2ih_{j}(E)} = \frac{a_{j+1}(E) \left( \rho(E)- a^{0}_{p}(E)\right) -b_{j+1}(E) a^{0}_{p-1}(E) }{\overline{a_{j+1}(E) \left( \rho(E)- a^{0}_{p}(E)\right) -b_{j+1}(E) a^{0}_{p-1}(E)}}.$$
For $0 \leq m \leq p-1$, we define $h_{m-1}(E)$ in the same way
$$e^{2ih_{m-1}(E)} = \frac{a_{m}(E) \left( \rho(E)- a^{0}_{p}(E)\right) -b_{m}(E) a^{0}_{p-1}(E) }{\overline{a_{m}(E) \left( \rho(E)- a^{0}_{p}(E)\right) -b_{m}(E) a^{0}_{p-1}(E)}}.$$
Here, $\rho(E)=e^{ip\theta_{p}(E)}$ and $a^{0}_{p-1}, a^{0}_{p}, a_{j+1}, b_{j+1}, a_{m}, b_{m}$ are polynomials defined in \eqref{eq:monodromy} and \eqref{eq:product}. \\
First of all, we prove the smoothness of $\theta^{-1}_{p,L}$.
\begin{lemma}\label{L:smoothness}
Let $E_0 \in \partial \Sigma_{\zz}$ and $B_i$ be the band of $\Sigma_{\zz}$ containing $E_0$. We put $J=\theta_{p,L}\left(B_i\right)$. Then, $\theta^{-1}_{p,L}$ is $ C^2$ on $J$ and its two first derivatives on $J$ are bounded by a constant independent of $L$. Besides, $\frac{d}{dx} \theta^{-1}_{p,L}(x)=0$ and
$\frac{d^2}{dx^2} \theta^{-1}_{p,L}(x) \ne 0$ at $x= \theta_{p,L}(E_{0})$.	
\end{lemma}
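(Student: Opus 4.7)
My plan is to unfold the square-root branch point of the quasi-momentum at $E_{0}$ by introducing the local coordinate $s$ defined by $E = E_{0} + s^{2}$, signed so that $s \geq 0$ parametrizes $B_{i}$. I will show that both $\theta_{p}$ and $h_{j}$, viewed as functions of $s$, are real-analytic near $s = 0$ with increments from $s=0$ odd in $s$; then $\theta_{p,L}$ will inherit the same structure with coefficients bounded uniformly in $L$, and an analytic inverse function theorem will give the claimed regularity, vanishing first derivative and nonzero second derivative, with the required uniform bounds.

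For $\theta_{p}$, the formula $\rho(E) = \Delta(E)/2 + i\sqrt{4-\Delta(E)^{2}}/2$ combined with $\Delta(E) - \Delta(E_{0}) \sim \Delta'(E_{0})(E-E_{0})$ and $\Delta'(E_{0}) \neq 0$ at an open band edge gives $\theta_{p}'(E) = A(E)/\sqrt{E-E_{0}}$, where $A$ is real-analytic and nonvanishing near $E_{0}$. Integrating after the substitution $dE = 2s\,ds$ yields
\[
\theta_{p}(E_{0}+s^{2}) - \theta_{p}(E_{0}) = 2 A(E_{0})\, s + \tfrac{2}{3} A'(E_{0})\, s^{3} + \cdots,
\]
real-analytic and odd in $s$. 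For $h_{j}$, starting from $e^{2i h_{j}(E)} = f(E)/\overline{f(E)}$ with $f(E) = a_{j+1}(E)(\rho(E) - a^{0}_{p}(E)) - b_{j+1}(E) a^{0}_{p-1}(E)$, I observe that the polynomials $a_{j+1}, b_{j+1}, a^{0}_{p}, a^{0}_{p-1}$ have real coefficients in $E$, while $\rho(E_{0}+s^{2})$ is real-analytic in $s$ with the key symmetry $\overline{\rho(s)} = \rho(-s)$ (exchanging the two Floquet multipliers). Hence $f(s)$ is real-analytic in $s$ and $\overline{f(s)} = f(-s)$ for real $s$, so $e^{2i h_{j}(s)} = f(s)/f(-s)$. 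Factoring $f(s) = s^{k} g(s)$ with $g$ real-analytic and $g(0) \neq 0$ (covering both the case $f(E_{0}) \neq 0$ with $k=0$ and the degenerate case where $f$ vanishes at $E_{0}$), one obtains $f(s)/f(-s) = (-1)^{k}\, g(s)/g(-s)$, and the principal logarithm gives $h_{j}(s) - h_{j}(0) = \tfrac{1}{2i}\log\bigl(g(s)/g(-s)\bigr)$, which is real-analytic in $s$ and odd because $g(s)/g(-s) \mapsto \bigl(g(s)/g(-s)\bigr)^{-1}$ under $s \to -s$.

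Putting the two pieces together,
\[
y_{L}(s) := \theta_{p,L}(E_{0}+s^{2}) - \theta_{p,L}(E_{0}) = \Bigl(\alpha_{0} - \tfrac{\beta_{0}}{L-j}\Bigr) s + O(s^{3})
\]
is a real-analytic odd function of $s$ whose Taylor coefficients are bounded uniformly in $L$ and whose leading coefficient tends to $\alpha_{0} = 2 A(E_{0}) \neq 0$. For $L$ large, an analytic inverse function theorem on a fixed complex neighborhood of $s=0$ on which $|y_{L}'(s)|$ is bounded below uniformly in $L$ produces a real-analytic odd inverse $s = s(y_{L})$ with uniformly bounded Taylor coefficients. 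Therefore $\theta^{-1}_{p,L}(y_{L}) = E_{0} + s(y_{L})^{2}$ is real-analytic and even in $y_{L}$, hence $C^{2}$ on $J$ with derivatives bounded uniformly in $L$; evenness forces $\frac{d}{dx}\theta^{-1}_{p,L}(\theta_{p,L}(E_{0})) = 0$, while $\frac{d^{2}}{dx^{2}}\theta^{-1}_{p,L}(\theta_{p,L}(E_{0})) = 2\, s'(0)^{2} = 2/\bigl(\alpha_{0} - \beta_{0}/(L-j)\bigr)^{2} \neq 0$.

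The main obstacle is the branch bookkeeping for $h_{j}$ in the non-generic sub-case $f(E_{0}) = 0$: one must factor out the correct order of vanishing and choose the branch of the logarithm so that $h_{j}$ stays continuous modulo $\pi$ through $s = 0$. Once this algebraic step is handled as above, the uniform-in-$L$ inversion reduces to a standard application of the analytic implicit function theorem.
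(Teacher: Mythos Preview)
Your argument is correct and takes a genuinely different route from the paper's. The paper proceeds by direct computation: it shows $t\mapsto h_{j}(E_{0}+t^{2})$ is analytic by writing $s(u)=\alpha(t)+i\beta(t)$ and expanding $e^{2ih_{j}}$ as an explicit rational expression in $\alpha,\beta$; it then invokes the density-of-states asymptotics $\theta_{p}'(u)\sim c_{1}|u-E_{0}|^{-1/2}$, $n'(u)\sim c_{2}|u-E_{0}|^{-3/2}$ and plugs these, together with the bounds $h_{j}'(u)=O(|u-E_{0}|^{-1/2})$, $h_{j}''(u)=O(|u-E_{0}|^{-3/2})$, into the chain-rule formulas $\frac{d}{dx}\theta_{p,L}^{-1}=1/\theta_{p,L}'$ and $\frac{d^{2}}{dx^{2}}\theta_{p,L}^{-1}=-\theta_{p,L}''/(\theta_{p,L}')^{3}$ to read off continuity, the vanishing of the first derivative, and the nonvanishing of the second at the endpoint.

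Your approach instead exploits the symmetry $\overline{\rho(s)}=\rho(-s)$, hence $\overline{f(s)}=f(-s)$, to get that $h_{j}(s)-h_{j}(0)$ (and likewise $\theta_{p}(s)-\theta_{p}(0)$) is an \emph{odd} analytic function of $s$; a single analytic inversion then produces $\theta_{p,L}^{-1}(x)=E_{0}+s(x-\theta_{p,L}(E_{0}))^{2}$ with $s(\cdot)$ odd, from which the vanishing first derivative and nonzero second derivative drop out structurally. This is cleaner, yields real-analyticity rather than merely $C^{2}$, and handles the degenerate case $f(E_{0})=0$ uniformly via the factorization $f(s)=s^{k}g(s)$, whereas the paper treats it by a separate explicit computation. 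The paper's approach, on the other hand, is more elementary in that it avoids the inverse function theorem and stays close to the asymptotics of the density of states already used elsewhere. One small point you leave implicit: the lemma asks for $C^{2}$ on all of $J$, not just near $\theta_{p,L}(E_{0})$; away from the band edges $\theta_{p,L}$ is analytic with nonvanishing derivative, so $\theta_{p,L}^{-1}$ is trivially smooth there, and the argument at the opposite endpoint of $B_{i}$ is symmetric.
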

\begin{proof}[Proof of Lemma \ref{L:smoothness}]
Assume that $L=Np+j$ where $p$ is the period of the potential $V$ and $0 \leq j \leq p-1$. Since Theorem \ref{T:spectraldata}, $\theta_{p,L}$ is continuous and strictly monotonous on $B_i$. Hence, $J$ is a compact interval.\\
We can assume that $E_{0}$ is the left endpoint of the band $B_{i}$. Pick $x\in J$. Let $u\in B_i$ such that $\theta_{p,L}(u)=x$. Then, $\frac{d}{dx} \theta^{-1}_{p,L}(x)= \frac{1}{\theta'_{p,L}(u)}$. Note that $\theta_{p,L}(u)$ is analytic and strictly positive in the interior of the band $B_i$ (c.f. Theorem \ref{T:spectraldata}). Hence, it suffices to prove the lemma for $u$ near $E_0$.\\ 
It is well known that, for $u$ near $E_0$, $\theta_p'(u)=\pi n(u)=c_1|u-E_0|^{-1/2}(1+o(1))$ and $n'(u)=c_2 |u-E_0|^{-3/2}(1+o(1))$ where $n(u)$ is the density of state of the operator $H^{\zz}$ and $c_1, c_2 \ne 0$ (c.f. \cite{gerald00}).\\
Put $u=E_0+t^2$ with $t>0$. From the definition of $h_j$, we see that $t\mapsto h_{j}(E_{0}+t^{2})$ is analytic near $0$.
Indeed, we put $s(u)=a_{j+1}(u) \left( \rho(u)- a^{0}_{p}(u)\right) -b_{j+1}(u) a^{0}_{p-1}(u)$ with $\rho(u)=e^{ip\theta_{p}(u)}$. Then, since $\rho(E_{0}+t^{2})$ is analytic near $0$, so is $s(E_{0}+t^{2})$. We rewrite $s(E_{0}+t^{2})= \alpha(t) + i \beta(t)$ where $\alpha(t)=c_{\alpha}t^{k_{1}} \left(1+g_{1}(t) \right)$ and $\beta(t)=c_{\beta}t^{k_{2}} \left(1+g_{2}(t) \right)$ with $c_{\alpha}, c_{\beta} \ne 0$, $k_{1}, k_{2} \in \nn$ and $g_{1}(t), g_{2}(t)$ analytic near $0$. W.o.l.g., assume that $c_{\alpha}=c_{\beta}=1$ and $k_{1}\geq k_{2}$. Hence, $e^{2ih_{j}(E_{0}+t^{2})}$ is equal to
\begin{equation*}
\frac{t^{2(k_{1}-k_{2})}\left( 1+g_{1}(t)\right)^{2}- \left( 1+g_{2}(t)\right)^{2}+2i t^{2k_{1}}\left( 1+g_{1}(t)\right)\left( 1+g_{2}(t)\right)}{t^{2(k_{1}-k_{2})}\left( 1+g_{1}(t)\right)^{2}+\left( 1+g_{2}(t)\right)^{2} }. 
\end{equation*}
This formula implies directly that $h_{j}(E_{0}+t^{2})$ is analytic in $t$ near $0$.\\
Consequently, $h'_{j}(u)=O\left( |u-E_{0}|^{-1/2}\right)$ and $h^{''}_{j}(u)= O\left(|E-E_{0}|^{-3/2} \right) $.\\ 
Therefore, $\frac{d}{dx} \theta^{-1}_{p,L}(x)=c|u-E_0|^{1/2} (1+o(1))$ near $\theta_{p,L}(E_0)$ where $c \ne 0$. In the other words, $\frac{d}{dx} \theta^{-1}_{p,L}(x)$ is continuous and bounded by a constant independent of $L$ on $J$. Moreover, $\frac{d}{dx} \theta^{-1}_{p,L}(x)=0$ at $x=\theta_{p,L}(E_0)$.\\
Next, we study the second derivarive of $\theta^{-1}_{p,L}(x)$ on the interval $J$. We compute
\begin{equation}\label{eq:7.38}
\frac{d^2}{dx^2} \theta^{-1}_{p,L}(x)= - \frac{\theta''_{p,L}(u)}{ \left(\theta'_{p,L}(u) \right)^{3}}=
\frac{-\pi n'(u) + \frac{h_j^{''}(u)}{L-j}}{\left(\pi n(u)-\frac{h_j'(u)}{L-j} \right)^3 }.	
\end{equation}
We observe that the numerator of the right hand side (RHS) of \eqref{eq:7.38} is equal to $c_3|u-E_0|^{-3/2}(1+o(1))$ and the denominator of RHS of \eqref{eq:7.38} is equal to $c_4|u-E_0|^{-3/2}(1+o(1))$ near $E_0$ where $c_{3}, c_{4}$ are non-zero. Hence, $\theta^{-1}_{p,L}(x)$ is $C^{2}$ on the whole interval $J$ and $\frac{d^2}{dx^2} \theta^{-1}_{p,L}(x) \ne 0$ at $x=\theta_{p,L}(E_0)$. Moreover, its second derivative is bounded on $J$ by a positive constant independent of $L$.
\end{proof}
The following lemma will be useful for proving the smoothness of $a_{k}$ as a function of $\lambda_{k}$.
\begin{lemma}\label{L:analytic}
Let $E_{0}$ be one endpoint of a band $B_{i}$ of $\Sigma_{\zz}$. For $0\leq m \leq p-1$, we define, on the band $B_{i}$, 
\begin{equation}\label{eq:xim}
\xi_{m}(E)= \cos \left(h_{j}(E) - p\theta_{p}(E) -2h_{m-1}(E) \right) \cdot \frac{\sin \left(h_{j}(E) \right)}{\sin\left( p\theta_{p}(E)\right) }
\end{equation}
and $\nu_{m}(E)= 1-\cos \left(2h_j(E)-2h_{m-1}(E)\right)$.\\
Then, $\xi_{m}$ and $\nu_{m}$ are analytic near $0$ as a function of the variable $t= \sqrt{|E-E_{0}|}$. Moreover, $\nu_{m}(E)= \nu_{m,0} + \nu_{m,2}t^{2} +O(t^{3})$ where $\nu_{m,0}$ is equal to either $0$ or $2$, $\nu_{m,2} \in \rr$. 
\end{lemma}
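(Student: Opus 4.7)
The plan is to substitute $E = E_0 + t^2$ (with $t \geq 0$; the sign convention adapts if $E_0$ is the right endpoint of $B_i$) and show each quantity is analytic in $t$ near $0$. First I would establish that $t \mapsto h_j(E_0+t^2)$, $t \mapsto h_{m-1}(E_0+t^2)$, and $t \mapsto p\theta_p(E_0+t^2)$ are all real analytic near $t = 0$. For $h_j$ this is the computation already carried out in the proof of Lemma \ref{L:smoothness}; for $h_{m-1}$ the same computation applies verbatim with $(a_m, b_m)$ replacing $(a_{j+1}, b_{j+1})$ in the definition of $s$. For $p\theta_p$, analyticity follows from the square-root branch structure of the quasi-momentum at the endpoint: $\cos(p\theta_p(E_0+t^2)) = \Delta(E_0+t^2)/2$ is a polynomial in $t^2$ equal to $\pm 1$ at $t = 0$, and inverting $\cos$ via $\arccos(1-x)\sim\sqrt{2x}$ yields $p\theta_p(E_0+t^2) = p\theta_p(E_0) + \alpha_1 t + O(t^2)$ analytically in $t$ with $\alpha_1 \neq 0$.

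Next I would identify the values at $t = 0$: from $\cos(p\theta_p(E_0)) = \pm 1$ one has $p\theta_p(E_0) \in \pi\zz$, so $\rho(E_0) = e^{ip\theta_p(E_0)} \in \{\pm 1\}$. Consequently the numerators in the defining relations for $h_j(E_0)$ and $h_{m-1}(E_0)$, namely $a_{j+1}(E_0)(\rho(E_0) - a^0_p(E_0)) - b_{j+1}(E_0) a^0_{p-1}(E_0)$ and its analogue with $(a_m, b_m)$, are real. Hence $e^{2ih_j(E_0)}, e^{2ih_{m-1}(E_0)} \in \{\pm 1\}$ and $h_j(E_0), h_{m-1}(E_0) \in \tfrac{\pi}{2}\zz$. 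For $\nu_m$: let $\phi(t) := 2h_j(E_0+t^2) - 2h_{m-1}(E_0+t^2)$, analytic in $t$ with $\phi(0) \in \pi\zz$; then $\nu_m(E_0+t^2) = 1 - \cos(\phi(t))$ is analytic, $\nu_m(E_0) = 1 - \cos(\phi(0)) \in \{0, 2\}$ gives $\nu_{m,0}$, and the linear coefficient $\sin(\phi(0))\phi'(0)$ vanishes because $\sin(\phi(0)) = 0$, yielding $\nu_m = \nu_{m,0} + \nu_{m,2} t^2 + O(t^3)$ with $\nu_{m,2}\in\rr$.

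For $\xi_m$ the cosine factor and both sines are analytic in $t$, so the only obstruction to analyticity is the simple zero of $\sin(p\theta_p(E_0+t^2))$ at $t = 0$. I would resolve this by a case analysis on $h_j(E_0) \bmod \pi$: if $h_j(E_0) \in \pi\zz$, then $\sin(h_j(E_0+t^2))$ itself vanishes at $t = 0$ to order at least $1$ and cancels the denominator; otherwise $h_j(E_0) \in \tfrac{\pi}{2}+\pi\zz$, and combined with $p\theta_p(E_0), 2h_{m-1}(E_0) \in \pi\zz$, the argument of the cosine factor lies in $\tfrac{\pi}{2} + \pi\zz$, so that cosine vanishes at $t = 0$ to order at least $1$ and supplies the compensating zero. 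Division of analytic functions then gives analyticity of $\xi_m$ near $t = 0$. The main technical point is precisely the matching of these vanishing orders; the case distinction, driven by the mod-$\pi/2$ identifications at $E_0$, is what makes it work.
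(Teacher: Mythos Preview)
Your proposal is correct and follows essentially the same approach as the paper: analyticity of $h_j$, $h_{m-1}$, $p\theta_p$ in the variable $t$, identification of the values at $t=0$ modulo $\pi/2$, the same two-case split on $h_j(E_0)\bmod\pi$ to produce a compensating simple zero in the numerator of $\xi_m$, and the vanishing of the linear coefficient of $\nu_m$ via $\sin(\phi(0))=0$. The only cosmetic difference is that you supply a direct argument for $h_j(E_0),h_{m-1}(E_0)\in\tfrac{\pi}{2}\zz$ (numerator real because $\rho(E_0)\in\{\pm1\}$), whereas the paper cites \cite[Lemma~4.4]{klopp13}.
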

\begin{proof}[Proof of Lemma \ref{L:analytic}]
W.o.l.g., we assume that $E_{0}$ is the left endpoint of $B_{i}$. Then, we write $E=E_{0}+t^{2}$ for $t>0$.
Let's consider the function $\xi_{m}(E)$ first. Note that $h_{j}(E_{0}+t^{2}), h_{m-1}(E_{0}+t^{2})$  are analytic in $t$ near $0$ (see the proof of Lemma \ref{L:smoothness}). Hence, we can write 
\begin{align}\label{eq:7.57}
h_{j}(E_{0}+t^{2}) = h_{j}(E_{0}) + h_{j,1}t +h_{j,2}t^{2} + O(t^{3}) ;\\
h_{m-1}(E_{0}+t^{2}) = h_{m-1}(E_{0}) + h_{m-1,1}t +h_{m-1,2}t^{2} + O(t^{3})
\end{align}
where $h_{j}(E_{0}), h_{m-1}(E_{0})$ belong to either $\pi \mathbb Z$ or $\frac{\pi}{2}+\pi \zz$ (see \cite[Lemma 4.4]{klopp13}).\\
\textit{First case: Assume that $h_{j}(E_{0}) \in \pi \zz$.}\\ 
By Taylor's expansion for the sine function, we have
\begin{equation}\label{eq:7.58}
\sin \left(h_{j}(E) \right)= 
\pm \left(h_{j,1}t+h_{j,2}t^{2}\right) +O(t^{3}).  
\end{equation}
W.o.l.g., assume that $\sin \left(h_{j}(E) \right)= h_{j,1}t+h_{j,2}t^{2} +O(t^{3}).$\\  
Next, we write $\theta_{p}(E) = \theta_{p, 0}+ \theta_{p, 1}t  + O(t^{2})$ where $\theta_{p, 0}=\theta_{p}(E_{0}) \in \frac{\pi}{p} \zz$ and $\theta_{p, 1} \ne 0$. Note that $p\theta_{p}(E_{0}), 2h_{m-1}(E_{0}) \in \pi \zz$, hence, $h_{j}(E_{0})-p\theta_{p}(E_{0})-2h_{m-1}(E_{0}) \in \pi \zz$. Then, Taylor's expansion for the cosine function yields
\begin{align}\label{eq:7.59}
\cos \left(h_{j}(E) - p\theta_{p}(E) -2h_{m-1}(E) \right) &= \pm 1  \mp \frac{t^{2}}{2} (h_{j,1}-p\theta_{p,1} -2h_{m-1, 1})^{2}+O(t^{3}).
\end{align} 
Hence, \eqref{eq:7.58}-\eqref{eq:7.59} yield 
\begin{align}\label{eq:7.60}
\cos \left(h_{j}(E) - p\theta_{p}(E) -2h_{m-1}(E) \right) \cdot \sin \left(h_{j}(E) \right) = \epsilon h_{j,1}t + \epsilon h_{j,2} t^{2}+ O(t^{3}) 
\end{align}
where $\epsilon=\pm 1$.\\
On the other hand, since $p\theta_{p}(E_{0}) \in \pi \zz$, we have
\begin{equation}\label{eq:thetap}
\sin \left( p \theta_{p}(E)\right) = \pm p \theta_{p,1} t + O(t^{3}).
\end{equation}
Thanks to \eqref{eq:7.60}-\eqref{eq:thetap}, we infer that
\begin{equation}\label{eq:7.61}
\xi_{m}(E)= \cos \left(h_{j}(E) - p\theta_{p}(E) -2h_{m-1}(E) \right) \cdot \frac{\sin \left(h_{j}(E) \right)}{\sin\left( p\theta_{p}(E)\right) } = \xi_{m,0} + \xi_{m,1}t +O(t^{2})
\end{equation}
where $\xi_{m,0}$ and $\xi_{m,1}$ are independent of  $m$.\\ 
Note that, all functions which we have considered so far are analytic in $t$. Hence, $O(t^{2})$ in \eqref{eq:7.61} can be written in $t^{2}g(t)$ where $g(t)$ is analytic near $0$. Hence, the above asymptotic shows that $\xi_{m}$ is analytic near $0$ as a function of $t$.\\
\textit{Second case: $h_{j}(E_{0}) \in \frac{\pi}{2}+ \pi \zz$.}\\
Note that, when we use the Taylor expansions in this case, the roles of the sine and the cosine terms in $\xi_{m}(E)$ are interchanged. Hence, $\xi_{m}(E)$ can be written in the same form as in \eqref{eq:7.61} but with different coefficients $\xi_{m,0}$ and $\xi_{m,1}$ which can depend on $m$. Hence, $t \mapsto \xi_{m}(E_{0}+t^{2})$ is analytic near $0$.\\
Finally, we consider the function $\nu_{m}(E)$. Since $h_{j}, h_{m-1}$ are analytic in $t$ near $0$, so is $\nu_{m}$. On the other hand, $2h_{j}(E_{0}), 2h_{m-1}(E_{0})$ always belong to $\pi \zz$. Hence, from Taylor's expansion of the function cosine, it is easy to see that the the coefficient of order $1$ in Taylor's expansion of $\nu_{m}(E_{0}+t^{2})$ vanishes. Precisely, $\nu_{m}(E)= \nu_{m,0} + \nu_{m,2}t^{2} +O(t^{3})$ where $\nu_{m,0}$ is equal to either $0$ or $2$.
\end{proof}
Now we prove the smoothness of $a_{k}$ as a function of $\lambda_{k}$.
\begin{lemma}\label{L:nicefunction}
Let $E_{0}$ be an endpoint of one band $B_{i}$ of $\Sigma_{\zz}$ and $L=Np+j$. Assume the condition (G) (see Subsection \ref{Ss:results}). Then, for each eigenvalue $\lambda_{k}$ near $E_{0}$, $a_{k}= \frac{1}{L-j} q(\lambda_{k})$ where $q$ is a $C^{1}$ function near $E_{0}$ and $q$ is bounded near $E_{0}$ by a constant $C_{\text{lip}}$ independent of $L$.\\
Consequently, for all $\lambda_{k}, \lambda_{n}$ near $E_{0}$, we have
\begin{equation}\label{eq:lip}
|a_k-a_n| \leq \frac{C_{\text{lip}}}{L}|\lambda_k-\lambda_n|.
\end{equation}   	
\end{lemma}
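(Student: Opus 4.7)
The plan is to derive an explicit representation $a_k = \frac{1}{L-j}q(\ld_k)$ from Theorem \ref{T:spectraldata1}, then identify $q$ as a combination of the elementary analytic building blocks constructed in Lemma \ref{L:analytic}, and finally read off the $C^1$ regularity from the analyticity in the half-variable $t := \sqrt{\ld-E_0}$.

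Under condition (G), the point $E_0$ is not a bad closed gap, so the $\kappa$ term in Theorem \ref{T:spectraldata1} vanishes and one has
\begin{equation*}
a_k = \frac{f_j(\ld_k)}{L-j}\left(1 + \frac{\Xi_{p,L}(\ld_k)}{L-j}\right) =: \frac{q(\ld_k)}{L-j},
\end{equation*}
where $f_j$ is real analytic in $\ringz$ and $\Xi_{p,L}$ is bounded in $C^1$ uniformly in $L$. I would next trace the construction of $f_j$ from the monodromy data $a_{j+1}, b_{j+1}, a^0_p, a^0_{p-1}$ and the Bloch multiplier $\rho(E)$ along the lines of \cite[Section 4]{klopp13}, and rewrite it, after multiplication by the obvious polynomial and smooth factors, as a product of the building blocks $\xi_{j+1}(E)$, $\nu_{j+1}(E)$ and $\sin(p\theta_p(E))$-type terms introduced in Lemma \ref{L:analytic}.

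Lemma \ref{L:analytic} then applies: each such block is analytic in $t = \sqrt{E-E_0}$ near $0$, hence so is $q(E_0 + t^2)$. Expanding
\begin{equation*}
q(E_0 + t^2) = q_0 + q_1 t + q_2 t^2 + O(t^3)
\end{equation*}
and comparing with the size bounds of Remark \ref{R:nearboundary} — which under (G) give $a_k \asymp |\ld_k - E_0|/(L-j)$, hence $q(\ld_k) \asymp t^2$ — forces $q_0 = q_1 = 0$ and $q_2 \ne 0$. In the variable $u = E - E_0 \ge 0$ this reads $q(E_0 + u) = q_2 u + O(u^{3/2})$, so $q'$ extends continuously to $u = 0$ with $q'(E_0) = q_2$. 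On the interior of $B_i$ the function $q$ is real analytic, and the only $L$-dependence enters through the $O(1/L)$ factor $1 + \Xi_{p,L}/(L-j)$, so $\|q\|_{C^1}$ on a one-sided neighborhood of $E_0$ inside $B_i$ is bounded by a constant $C_{\text{lip}}$ independent of $L$. Bound \eqref{eq:lip} then follows at once from the mean value theorem applied to $q$ on the segment $[\ld_k, \ld_n]$, together with $1/(L-j) \lesssim 1/L$.

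\textbf{Main obstacle.} The principal technical step is the algebraic rewriting: one must unfold $f_j$ from the normalization of the Bloch solutions, verify that after invoking condition (G) the resulting expression factors cleanly through $\xi_{j+1}$ and $\nu_{j+1}$, and then check that the vanishing $q_0 = q_1 = 0$ predicted by Remark \ref{R:nearboundary} occurs at the level of the explicit formula (not merely asymptotically), while $q_2 \ne 0$. The two subcases of (G) — namely $a^0_{p-1}(E_0) \ne 0$ with $d_{j+1} \ne 0$, versus $a^0_{p-1}(E_0) = 0$ with $a_{j+1}(E_0) \ne 0$ — are expected to be handled by exploiting the two possible values $\nu_{m,0} \in \{0, 2\}$ from Lemma \ref{L:analytic}, and confirming this dichotomy is the delicate part of the argument.
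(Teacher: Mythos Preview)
Your overall strategy --- write $a_k = q(\ld_k)/(L-j)$, show $q(E_0+t^2)$ is analytic in $t$ with $q_0=q_1=0$, and read off $C^1$ regularity --- is exactly what the paper does. But the structural picture you sketch is off in two places, and this matters because the whole content of the proof lies in the explicit computation you defer.

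First, the building blocks $\xi_m,\nu_m$ of Lemma~\ref{L:analytic} do \emph{not} enter in $f_j$. They appear only in the correction $\tilde f$ (equivalently, in $\Xi_{p,L}$): one has
\[
\tilde f(E)=\frac{2}{f(E)}\sum_{m=0}^{p-1}|\alpha_m(E)|^2\xi_m(E)+\frac{2}{f(E)}\sum_{m=0}^{j}|\alpha_m(E)|^2\nu_m(E),
\]
and Lemma~\ref{L:analytic} is used to control this $O(1/L)$ term, not the leading one. The leading factor is
\[
f_j(E)=\frac{|a^0_{p-1}(E)|^2}{|v_{j+1}(E)|^2}\,f(E)^{-1},\qquad f(E)=\frac{2}{p}\sum_{m=0}^{p-1}|\alpha_m(E)|^2,
\]
with $v_{j+1}(E)=a_{j+1}(E)(a^0_p(E)-\rho^{-1}(E))+b_{j+1}(E)a^0_{p-1}(E)$.

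Second, the $t^2$ vanishing does not come from the $\nu_{m,0}\in\{0,2\}$ dichotomy. In the paper's argument the two subcases of (G) are distinguished by whether $a^0_{p-1}(E_0)$ vanishes, and the $t^2$ factor arises from \emph{different} pieces in each case: when $a^0_{p-1}(E_0)\ne 0$ and $d_{j+1}\ne 0$, one has $f(E)\sim f_0/t^2$ with $f_0>0$, so $f^{-1}(E)$ supplies the $t^2$; when $a^0_{p-1}(E_0)=0$ and $a_{j+1}(E_0)\ne 0$, one has $f_0=0$ so $f^{-1}$ stays bounded, but now $|a^0_{p-1}(E)|^2/|v_{j+1}(E)|^2\sim t^4/t^2=t^2$ supplies the factor instead. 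In both cases the paper verifies by direct expansion that $q(E)=t^2 g_L(t)$ with $g_L$ analytic, $g_L(0)\ne 0$, and $g_L,g_L'$ bounded uniformly in $L$; then $q'(E)=g_L(t)+\tfrac{t}{2}g_L'(t)$ gives the $C^1$ claim.

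Finally, invoking Remark~\ref{R:nearboundary} to deduce $q_0=q_1=0$ is backwards: that remark records the asymptotic $a_k\asymp|\ld_k-E_0|/(L-j)$, which is a \emph{consequence} of the identity $q=t^2 g_L$, not an independent input. You correctly flag this as the main obstacle, but the resolution is not the one you anticipate.
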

\begin{proof}[Proof of Lemma \ref{L:nicefunction}]
Pick an eigenvalue $\lambda_{k}\in \ringz$ close to $E_{0}$. Then, according to  Theorem \ref{T:spectraldata1} and \cite[Lemma 4.6]{klopp13}, 
 we have 
$a_k = \frac{1}{L-j} q(\lambda_k)$ where $L=Np+j$ and $q(E)$ is equal to 
\begin{align*}%\label{eq:7.50}
 \frac{|a^0_{p-1}(E)|^2}{\left |a_{j+1}(E) \left(a^{0}_{p}(E)-\rho^{-1}(E) \right) + b_{j+1}(E)a^{0}_{p-1}(E) \right|^2 } f^{-1}(E) \left(1+\frac{\tilde{f}(E)}{L-j}\right)^{-1}
\end{align*} 
To prove the present lemma, it suffices to show that the derivative $q'(E)$ is continuous up to $E_0$ and $q'(E)$ is bounded near $E_0$ by a constant independent of $L$. Note that our function $q(E)$ depends on $L$.\\
We define, for $0\leq m \leq p-1$, 
$$v_{m}(E)= a_{m}(E) \left(  a^{0}_{p}(E)-\rho^{-1}(E)\right) +b_{m}(E)a^{0}_{p-1}(E)$$ 
and  $\psi_{m}(E)=\text{Re}\left(v_{m}(E) \right)= a_{m}(E) \left(a^{0}_{p}(E)-\cos(p\theta_{p}(E)) \right)  +b_{m}(E)a^{0}_{p-1}(E)$.\\ 
Since $\theta_{p}(E)-\theta_{p}(E_{0})= c_{1}|E-E_{0}|^{1/2}(1+o(1))$,\\ 
$\theta'_{p}(E)=\pi n(E)=c_{2}|E-E_{0}|^{-1/2}(1+o(1))$ with $c_{1}, c_{2} \ne 0$ and $\sin\left(p \theta_{p}(E_{0}) \right)=0$, we infer that $\psi_{m}(E)$  is a $C^{1}$ function up to $E_{0}$.\\ 
Consequently, $|v_{m}(E)|^{2} =\psi^{2}_{m}(E) +a^{2}_{m}(E) \sin^{2}\left( p\theta_{p}(E) \right)$ is $C^{1}$ on the band $B_{i}$ of $\Sigma_{\zz}$.\\
W.o.l.g., assume that $E_{0}$ is the left endpoint of the band $B_{i}$. We write $E=E_{0}+t^{2}$ for $t>0$. Then, $\rho(E_{0}+t^{2}), \theta_{p}(E_{0}+t^{2})$ are analytic in $t$ near $0$.\\
Recall that\\ 
$f(E)=\frac{2}{p} \sum\limits_{m=0}^{p-1}|\alpha_m(E)|^2$ where $\alpha_m(E)=\frac{v_m(E)}{\rho(E)-\rho^{-1}(E)}= -i \frac{v_m(E)}{2\sin(p\theta_p(E))}$ and  
\begin{align}\label{eq:bosung}
\tilde{f}(E)&=\frac{2}{f(E)} \left[ \sum_{m=0}^{p-1} |\alpha_{m}(E)|^{2} \cos \left(h_{j}(E) - p\theta_{p}(E) -2h_{m-1}(E) \right)  \right] \cdot \frac{\sin \left(h_{j}(E) \right)}{\sin\left( p\theta_{p}(E)\right) }  \notag\\
&+ \frac{2}{f(E)} \sum_{m=0}^{j} |\alpha_{m}(E)|^{2} \left(1- \cos\left(2 \left(h_{j}(E)-h_{m-1}(E) \right)  \right) \right) \\
&= \frac{2}{f(E)} \sum_{m=0}^{p-1} |\alpha_{m}(E)|^{2}\xi_{m}(E) +\frac{2}{f(E)} \sum_{m=0}^{j} |\alpha_{m}(E)|^{2}\nu_{m}(E) \notag.
\end{align}
(c.f. \cite[Section 4.1.4]{klopp13}).\\
We compute
\begin{equation}\label{eq:7.51}
|\alpha_m(E)|^2=\frac{1}{4\sin^2(p\theta_p(E))}  \left(a_m^2(E)\sin^2(p\theta_p(E)) + \psi_m^2(E) \right)  .
\end{equation}
Recall that, since \eqref{eq:thetap}, we can represent $\sin \left( p \theta_{p}(E)\right) = \pm p \theta_{p,1} t + O(t^{3})$  with $\theta_{p, 1} \ne 0$.\\  
Next, since $a_{m}(E)$ is a polynomial and $\psi_{m}(E)$ is $C^{1}$ up to $E_{0}$, we can write 
\begin{equation}\label{eq:7.53}
a_{m}(E_{0}+t^{2})= a_{m}(E_{0}) + a'_{m}(E_{0})t^{2} + O(t^{4});
\end{equation}
\begin{equation}\label{eq:7.54}
\psi_{m}(E_{0}+t^{2}) = \psi_{m}(E_{0}) + \psi_{m,2}t^{2} + O(t^{3}).
\end{equation}
Plugging \eqref{eq:thetap}, \eqref{eq:7.53} and \eqref{eq:7.54} in \eqref{eq:7.51}, we obtain
\begin{equation}\label{eq:7.55}
|\alpha_m(E)|^2= \frac{1}{t^{2}} \left( \alpha_{m, 0} + \alpha_{m,2}t^{2} + O(t^{3})  \right) 
\end{equation}
where $\alpha_{m,0} = \frac{\psi^{2}_{m}(E_{0})}{4 p^{2} \theta^{2}_{p,1}}$, $\alpha_{m,2} =  \frac{1}{4 p^{2} \theta^{2}_{p,1}} \left(
p^{2}\theta^{2}_{p,1}a^{2}_{m}(E_{0}) + 2 \psi_{m}(E_{0}) \psi_{m,2}\right) $.
Hence,
\begin{equation}\label{eq:7.56}
f(E_{0}+t^{2})= \frac{1}{t^{2}} \left(f_{0} + f_{2}t^{2} +O(t^{3}) \right) 
\end{equation}
where $f_{0}= \frac{1}{2p^{3} \theta^{2}_{p,1}}  \sum\limits_{m=0}^{p-1}\psi^{2}_{m}(E_{0})$ and 
$$f_{2}= \frac{1}{2p^{3} \theta^{2}_{p,1}} \left(
p^{2}\theta^{2}_{p,1} \sum_{m=0}^{p-1}a^{2}_{m}(E_{0}) + 2 \sum_{m=0}^{p-1}\psi_{m}(E_{0}) \psi_{m,2}\right). $$
For $0\leq m \leq p-1$, let $\xi_{m}(E)$ be the function defined in \eqref{eq:xim}, Lemma \ref{L:analytic}. Then, $\xi_{m}(E_{0}+t^{2})$ is analytic in $t$ near $0$ and we write $\xi_{m}(E) =\xi_{m,0}+\xi_{m,1} t + O(t^{2})$.
Hence, \eqref{eq:7.55} yield
\begin{equation}\label{eq:7.62}
2 \sum_{m=0}^{p-1}|\alpha_m(E)|^2 \xi_{m}(E) = \frac{1}{t^{2}} \left( \beta_{0} + \beta_{1} t + O(t^{2}) \right) 
\end{equation} 
where $\beta_{0}= 2\sum\limits_{m=0}^{p-1} \alpha_{m,0}\xi_{m,0}$ and $\beta_{1}=2\sum\limits_{m=0}^{p-1} \alpha_{m,0}\xi_{m,1}$.\\
Note that all series expansions in $t$ which we have used so far are associated to analytic functions in $t$. We thus can write $O(t^{2})$ in \eqref{eq:7.62} as $\beta_{2} t^{2}+t^{3}g_{0}(t)$ for some $\beta_{2}\in \mathbb R$ and $g_{0}$ analytic near $0$.\\
Now we put $\nu_{m}(E)= 1-\cos \left(2h_j(E)-2h_{m-1}(E)\right)$. Thanks to Lemma \ref{L:analytic}, we can write $\nu_{m}(E)= \nu_{m,0} + \nu_{m,2}t^{2} +O(t^{3})$. Then,
\begin{equation}\label{eq:7.63}
2 \sum_{m=0}^{j}|\alpha_m(E)|^2 \nu_{m}(E) = \frac{1}{t^{2}} \left( \gamma_{0} + \gamma_{2} t^{2} + O(t^{3}) \right) 
\end{equation} 
where $\gamma_{0}= 2\sum\limits_{m=0}^{j} \alpha_{m,0}\nu_{m,0}$.\\
To sum up, 
\begin{equation*}
f(E) \tilde{f}(E)= \frac{1}{t^{2}} \left( \beta_{0}+\gamma_{0} +\beta_{1}t +(\beta_{2}+\gamma_{2})t^{2} + t^{3}g_{1}(t)  \right) 
\end{equation*}
with $g_{1}(t)$ analytic near $0$.\\
Hence, for $L=Np+j$ large, we have
\begin{align}\label{eq:7.64}
&f(E)+ \frac{1}{L-j}f(E) \tilde{f}(E) = \notag \\ 
&\frac{1}{t^{2}} \left[ \left( f_{0}+\frac{\beta_{0}+\gamma_{0}}{L-j}\right) + \frac{\beta_{1}}{L-j}t + \left( f_{2} + \frac{\beta_{2}+\gamma_{2}}{L-j}\right) t^{2}  +t^{3}g_{2}(t) 
    \right]
\end{align}
where $g_{2}(t)$ is analytic near $0$. Moreover, $g_{2}(t)$ and its derivative are bounded by a constant independent of $L$ .\\
According to the condition (G), we distinguish two cases:\\ 
\textit{First case: Assume that $a^0_{p-1}(E_0)\ne 0$ and $d_{j+1}= v_{j+1}(E_{0})\ne 0$.}\\
First of all, in the present case, the function 
$$\frac{|a^0_{p-1}(E)|^2}{\left |a_{j+1}(E) \left(a^{0}_{p}(E)-\rho^{-1}(E) \right) + b_{j+1}(E)a^{0}_{p-1}(E) \right|^2 }= \frac{|a^0_{p-1}(E)|^2}{\left|v_{j+1}(E) \right|^{2}}$$ 
is analytic in $t$ near $0$.\\
Next, from the definition of $a_{m}(E)$ and $b_m(E)$ (see \eqref{eq:product}, Section \ref{S:sd}), we have  
\begin{equation}\label{eq:7.66}
	\begin{vmatrix}
	a_{m}(E) & b_{m}(E)\\
	a_{m-1}(E) & b_{m-1}(E)
	\end{vmatrix}
	=1.
\end{equation}
Combining this with the hypothesis that $a^0_{p-1}(E_0)\ne 0$, we infer that there exists $m_{0} \in \{0, \ldots, p-1 \}$ s.t. $\psi_{m}(E_{0}) \ne 0$. Hence, $f_{0}>0$. Then, with $L=Np+j$ sufficiently large, we can represent
\begin{equation}\label{eq:7.65}
f^{-1}(E) \left(1+\frac{\tilde{f}(E)}{L-j}\right)^{-1} = \frac{t^{2}}{f_{0,L}+ tg_{3}(t)}
\end{equation}
where the analytic function $g_{3}(t)$ and its derivative are bounded by a positive constant independent of $L$. Moreover, $f_{0,L}$ is lower bounded and upper bounded by positive constants independent of $L$. Hence, $q(E)$ can be written as $t^{2}g_{L}(t)$ where $g_{L}(t)$ is analytic near $0$. The function $g_{L}(t)$ does not vanish at $0$ and $\max \{|g_{L}(t), g'_{L}(t)| \} \leq C$ with some $C>0$ independent of $L$.\\ 
\textit{Second case: Assume that $a^0_{p-1}(E_0)=0$ and $a_{j+1}(E_{0}) \ne 0$}\\
Since $a^0_{p-1}(E_0)=0$, the monodromy matrix $\widetilde{T}_{0}(E_{0})$ defined in \eqref{eq:monodromy} is upper triangular with eigenvalues $\rho(E_{0})=\rho^{-1}(E_{0})=\pm 1$. Hence, $a^{0}_{p}(E_{0}) -\rho^{-1}(E_{0})=0$. Then, $\psi_{m}(E_{0})$, hence $\alpha_{m,0}$, is equal to $0$ for every $0\leq m\leq p-1$. Consequently, $f_{0}=\beta_{0}=\gamma_{0}=\beta_{1}= 0$. On the other hand, in the present case, $f_{2}= \frac{1}{2p} \sum_{m=0}^{p-1} a^{2}_{m}(E_{0})$. Note that, by \eqref{eq:7.66}, $a_{m}(E)$ and $a_{m-1}(E)$ can not vanish at $E_{0}$ simultaneously. Therefore, $f_{2}$ is strictly positive and 
\begin{equation}\label{eq:7.67}
f^{-1}(E) \left(1+\frac{\tilde{f}(E)}{L-j}\right)^{-1}= \frac{1}{f_{2,L} + t g_{4}(t)}
\end{equation}
where $g_{4}(t)$ is analytic near $0$. Moreover, there exists $C>0$ independent of $L$ such that $\max \{ |g_{4}(t)|, |g'_{4}(t)|  \} \leq C$ near $0$ and $\frac{1}{C}\leq f_{2, L}\leq C$.\\
Now we study the function 
$$\frac{|a^0_{p-1}(E)|^2}{\left |a_{j+1}(E) \left(a^{0}_{p}(E)-\rho^{-1}(E) \right) + b_{j+1}(E)a^{0}_{p-1}(E) \right|^2 }= \frac{|a^0_{p-1}(E)|^2}{\left|v_{j+1}(E) \right|^{2}}.$$ 
We rewrite $a^{0}_{p}(E)-\rho^{-1}(E)=a^{0}_{p}(E)-\cos \left( p \theta_{p}(E)\right) + i \sin \left( p \theta_{p}(E)\right)$.\\
We put $\varphi(E)=a^{0}_{p}(E)-\cos(p\theta_{p}(E))$. We observe that $\varphi(E)$ is a $C^{1}$ function up to $E_{0}$. On the other hand $\varphi(E_{0})=0$ since $a^{0}_{p}(E_{0}) -\rho^{-1}(E_{0})=0$. Hence, $\varphi(E) = (E-E_{0}) \left(\varphi'(E_{0})+o(1)\right)$. Note that $\sin \left( p \theta_{p}(E)\right)= c|E-E_{0}|^{1/2}(1+o(1))$ with $c\ne 0$. Therefore, $a^{0}_{p}(E)-\rho^{-1}(E)= \tilde{c}|E-E_{0}|^{1/2}(1+o(1))$ with $\tilde{c}\ne 0$.\\ 
On the other hand, $a_{j+1}(E_{0}) \ne 0$ and $a^{0}_{p-1}(E)$ has only simple roots. Hence, $\frac{|a^0_{p-1}(E)|^2}{\left|v_{j+1}(E) \right|^{2}}= t^{2} g_{5}(t)$ where $g_{5}(t)$ is analytic near $0$ and $g_{5}(0)\ne 0$.\\
Combining this with \eqref{eq:7.67}, there exist $C>0$ independent of $L$ and an analytic function $g_{L}(t)$ near $0$, $g_{L}(0) \ne 0$ such that \\
$q(E)= t^{2} g_{L}(t)$ and $\max \{|g_{L}(t)|, |g'_{L}(t)| \} \leq C$ near $0$.\\
\textbf{To sum up, in both cases, $q(E)=t^{2} g_{L}(t)$}.\\ 
This implies directly that $a_{k} \asymp \frac{|\lambda_{k}-E_{0}|}{L-j}$ in the generic case. Moreover, $q'(E) = g_{L}(t) + \frac{t}{2} g'_{L}(t)$ where $E=E_{0}+t^{2}$ and $t>0$. Hence, $q(E)$, as a function of $E$, is $C^{1}$ up to $E_{0}$. Besides, its derivative near $E_{0}$ is bounded by a constant $C_{\text{lip}}$ independent of $L$. As a result, \eqref{eq:lip} follows and we have the lemma proved. 
\end{proof}
\begin{remark}\label{R:becomeeigenvalue}
Assume that the boundary point $E_{0}$ satisfies the condition\\ 
$a^0_{p-1}(E_0)=a_{j+1}(E_{0})=0$. Then, since \cite[Lemma 4.2]{klopp13} and the fact that the matrix $ \widetilde{T}_{0}(E_{0})$ defined in \eqref{eq:monodromy} is not diagonal,   
 $E_{0}$ is an eigenvalue of $H_{L}$ for all $L$ large. Note that the hypothesis $a_{j+1}(E_{0})=0$ and \eqref{eq:7.66} imply that $b_{j+1}(E_{0}) \ne 0$. Hence, $\frac{|a^0_{p-1}(E)|^2}{\left|v_{j+1}(E) \right|^{2}}= c(1+o(1)) $ near $E_{0}$ with $c\ne 0$. Combining this with \eqref{eq:7.67}, we infer that $a_{k} \asymp \frac{1}{L}$ for all eigenvalues $\lambda_{k}$ of $H_{L}$ close to $E_{0}$. Besides, it is not hard to check that, from the proof of Lemma \ref{L:nicefunction}, we find again the behavior of $a_k$ stated in Remark \ref{R:nearboundary} for both cases, the generic and non-generic one.
\end{remark}
Finally, we state and prove an estimate for eigenvalues of $H_L$ in the band $B_i=[E_0, E_1]$ of $\Sigma_{\zz}$ and close to the boundary point $E_0$.   
\begin{lemma}\label{L:asymptoticformula}
Let $E_0 \in (-2, 2)$ be the left endpoint of the $i$th band $B_i$ of $\Sigma_{\zz}$. Let %$\lambda^{i}_0\leq \lambda^{i}_1 \leq \ldots \leq  \lambda^{i}_{n_i}$ 
$\lambda^{i}_0 < \lambda^{i}_1< \ldots < \lambda^{i}_{n_i}$ be eigenvalues of $H_{L}$ in $\mathring{B_i}$, the interior of $B_{i}$.\\
Pick $\ve>0$ a small, fixed number and $\ve_1\asymp \ve^2$. Let $I=I_{\ve_1}: =[E_0, E_0+\ve_1] \subset (-2,2) \cap \Sigma_{\mathbb Z}$.\\
Assume that $\lambda^{i}_k$ is an eigenvalue of $H_L$ in $I$. %and $\lambda^{i}_{k} \ne E_{0}$. 
Then, $k\leq \ve (L-j)$ and $\lambda^{i}_k-E_0 \asymp \frac{(k+1)^2}{L^2}$ (for $k\geq 1$, we will write $\lambda^{i}_k-E_0 \asymp \frac{k^2}{L^2}$ instead).\\ 
Moreover, there exists $\alpha>0$ s.t. for any $0 \leq n <k \leq \ve (L-j)$, we have
\begin{equation}\label{eq:survive}
\frac{|k^2-n^2|}{\alpha L^2}  \leq |\lambda^{i}_k -\lambda^{i}_n| \leq \frac{\alpha |k^2-n^2|}{L^2}.
\end{equation}
\end{lemma}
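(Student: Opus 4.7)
The plan is to combine the quantization of eigenvalues in $B_i$ from Theorem \ref{T:spectraldata} with the vanishing-first-derivative / non-vanishing-second-derivative information about $\theta_{p,L}^{-1}$ at the band edge from Lemma \ref{L:smoothness}, converting the uniform spacing of quantization points in the quasi-momentum variable into quadratic behavior in the energy variable.

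WLOG assume $\theta_{p,L}$ is increasing on $B_i$ and set $x_0^{\ast}:=\theta_{p,L}(E_0)$, the left endpoint of $J:=\theta_{p,L}(B_i)$. By Theorem \ref{T:spectraldata}, the eigenvalues $\lambda^{i}_k\in\mathring{B_i}$ are exactly those for which $\theta_{p,L}(\lambda^{i}_k)$ is a multiple of $\pi/(L-j)$ strictly greater than $x_0^{\ast}$. Writing $x_k:=\theta_{p,L}(\lambda^{i}_k)=x_0^{\ast}+\delta_L+k\pi/(L-j)$ with $\delta_L:=x_0-x_0^{\ast}\in(0,\pi/(L-j)]$, a short computation using $L-j=Np$, $\theta_p(E_0)\in(\pi/p)\zz$, $h_j(E_0)\in\pi\zz\cup(\pi/2+\pi\zz)$ (as in the proof of Lemma \ref{L:analytic}), and the fact that the generic condition (G) prevents $E_0$ from being an eigenvalue of $H_L$ for $L$ large, yields
\begin{equation*}
\delta_L\asymp\frac{1}{L}\qquad\text{uniformly in }L,
\end{equation*}
hence $x_k-x_0^{\ast}\asymp(k+1)/L$. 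Indeed, in case 1 of (G) one has $h_j(E_0)\in\pi\zz$, so $x_0^{\ast}$ is itself a multiple of $\pi/(L-j)$ and the next one sits at distance exactly $\pi/(L-j)$; in case 2 one has $h_j(E_0)\in\pi/2+\pi\zz$, so $x_0^{\ast}$ is a half-integer multiple of $\pi/(L-j)$ and the next integer multiple sits at distance $\pi/(2(L-j))$.

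Next, I invoke Lemma \ref{L:smoothness}: $\theta_{p,L}^{-1}$ is $C^2$ on $J$ with $(\theta_{p,L}^{-1})'(x_0^{\ast})=0$ and $(\theta_{p,L}^{-1})''(x_0^{\ast})\ne 0$, both derivatives uniformly bounded in $L$. A closer reading of that proof shows the leading coefficient of $(\theta_{p,L}^{-1})''$ near $x_0^{\ast}$ is uniformly bounded away from $0$, so on some neighborhood $\mathcal U$ of $x_0^{\ast}$ in $J$ of $L$-independent size, there are $0<c_1<c_2$ (independent of $L$) with $c_1\leq(\theta_{p,L}^{-1})''\leq c_2$ on $\mathcal U$. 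Two integrations from $x_0^{\ast}$, using $\theta_{p,L}^{-1}(x_0^{\ast})=E_0$, give for $x\in\mathcal U$
\begin{equation*}
\tfrac{c_1}{2}(x-x_0^{\ast})^2\leq\theta_{p,L}^{-1}(x)-E_0\leq\tfrac{c_2}{2}(x-x_0^{\ast})^2,
\end{equation*}
and, for $x_0^{\ast}\leq x<y$ in $\mathcal U$,
\begin{equation*}
\tfrac{c_1}{2}(y-x)(x+y-2x_0^{\ast})\leq\theta_{p,L}^{-1}(y)-\theta_{p,L}^{-1}(x)\leq\tfrac{c_2}{2}(y-x)(x+y-2x_0^{\ast}).
\end{equation*}

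Finally, plugging $x=x_k$ into the first display together with $x_k-x_0^{\ast}\asymp(k+1)/L$ yields $\lambda^{i}_k-E_0\asymp(k+1)^2/L^2$; combined with $\lambda^{i}_k\leq E_0+\ve_1$ and $\ve_1\asymp\ve^2$ this forces $k+1\lesssim\ve L$, hence $k\leq\ve(L-j)$ after shrinking $\ve$ by an absolute constant. Applying the second display with $x=x_n$, $y=x_k$ for $0\leq n<k\leq\ve(L-j)$ gives $y-x=(k-n)\pi/(L-j)$ and $x+y-2x_0^{\ast}=2\delta_L+(k+n)\pi/(L-j)\asymp(k+n+2)/L$, so
\begin{equation*}
|\lambda^{i}_k-\lambda^{i}_n|\asymp\frac{(k-n)(k+n+2)}{L^2}\asymp\frac{|k^2-n^2|}{L^2},
\end{equation*}
which is \eqref{eq:survive}. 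The main obstacle is the uniform lower bound $\delta_L\asymp 1/L$ from the second paragraph; everything else is bookkeeping on top of Lemma \ref{L:smoothness}.
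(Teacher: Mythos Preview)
Your argument is correct and arrives at the same conclusion, but the route differs from the paper's. The paper works in the square-root variable $x_k=\sqrt{\lambda_k-E_0}$, uses the analyticity of $t\mapsto\theta_{p,L}(E_0+t^2)$ to obtain an implicit relation $x_k=\tilde c(L)\frac{(k+1)\pi}{L-j}\cdot(1+x_kg_L(x_k))^{-1}$, and then subtracts to control $x_k-x_n$ directly. You instead stay in the quasi-momentum variable and exploit Lemma~\ref{L:smoothness} on $\theta_{p,L}^{-1}$: vanishing first derivative, nonvanishing second derivative at the edge, then two integrations. Your route is arguably cleaner once Lemma~\ref{L:smoothness} is in hand, since it reduces everything to elementary calculus on a $C^2$ function; the paper's route is more self-contained and makes the analytic structure (and the constants) more explicit.

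Two minor remarks. First, your identification of the two cases of (G) with the dichotomy $h_j(E_0)\in\pi\zz$ versus $h_j(E_0)\in\frac{\pi}{2}+\pi\zz$ is in fact correct (one checks that $s(E_0)=-d_{j+1}$ is real and nonzero in case~1, while in case~2 the leading term of $s(E_0+t^2)$ is purely imaginary), but it is not needed: the paper's proof simply splits on the value of $h_j(E_0)\bmod\pi$, which always lies in $(\pi/2)\zz$, and obtains $\delta_L\in\{\pi/(L-j),\,\pi/(2(L-j))\}$ without invoking (G). So the lemma holds regardless of (G). Second, the uniform-in-$L$ lower bound $c_1\le(\theta_{p,L}^{-1})''$ on an $L$-independent neighborhood is not part of the \emph{statement} of Lemma~\ref{L:smoothness}; as you say, it must be read off from the proof (the numerator and denominator in \eqref{eq:7.38} have matching $|u-E_0|^{-3/2}$ singularities with $L$-independent leading coefficients, and the $h_j$-corrections carry a factor $1/(L-j)$). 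This is a genuine but easy extraction.
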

\begin{proof}[Proof of Lemma \ref{L:asymptoticformula}]
To simplify notations, we will skip the superscript $i$ in $\lambda^i_k$ of $H_L$ throughout this proof.\\ 
First of all, from the property of $\theta_{p}$ and $h_{j}$ near $E_{0}$, we have, for any $E$ near $E_{0}$, 
\begin{equation}\label{eq:114}
\theta_{p,L}(E)-\theta_{p,L}(E_{0})= c(L)\sqrt{|E-E_{0}|} \left( 1+o(1)\right) 
\end{equation}
where $|c(L)|$ is lower bounded and upper bounded by positive constants independent of $L$.\\
Put $L=Np+j$ where $p$ is the period of the potential $V$ and $0 \leq j \leq p-1$. According to Theorem \ref{T:spectraldata}, $\theta_{p,L}(E)$ is strictly monotone on $B_{i}$. W.o.l.g., we assume that $\theta_{p,L}(E)$ is strictly increasing on $B_{i}$.
Note that, in this lemma, we enumerate eigenvalues $\ld_{\ell}$ in $\mathring{B_{i}}$ with the index $\ell$ starting from $0$. Then, we have to modify the quantization condition \eqref{eq:quan1} in Theorem \ref{T:spectraldata} appropriately. Recall that the quantization condition is $\theta_{p,L}(\ld_{\ell})= \frac{\pi \ell}{L-j}$ where $\frac{\pi \ell}{L-j} \in \theta_{p,L}(B_{i})$ with $\ell \in \zz$. Assume that $\theta_{p}(E_{0})= \frac{m \pi}{p}$ with $m \in \zz$.\\ 
Put $\ell=\lambda N + \tilde{k}$ where $\lambda \in \zz$ and $0 \leq \tilde{k} \leq N-1$. 
We find $\lambda, \tilde{k}$ such that 
\begin{equation}\label{eq:20.1}
\frac{\ell \pi}{Np} - \theta_{p,L}(E_{0})= (\ld-m)\frac{\pi}{p} + \frac{\tilde{k}\pi +h_{j}(E_{0})}{Np} > 0.
\end{equation}
It is easy to see that, for $N$ large, the necessary condition is $\ld-m \geq -1$. Consider the case $\ld -m =-1$. Then, \eqref{eq:20.1} yields 
\begin{equation}\label{eq:20.2}
\tilde{k} \pi + h_{j}(E_{0}) > N \pi.
\end{equation}  
According to \cite[Lemma 4.7]{klopp13}, $h_{j}(E_{0}) \in \frac{\pi}{2} \zz$. We observe that if $h_{j}(E_{0})<0$, there does not exist $0 \leq \tilde{k} \leq N-1$ satisfying \eqref{eq:20.2}. Hence, $h_{j}(E_{0}) \in \frac{\pi}{2} \nn$. We distinguish two cases. First of all, assume that $h_{j}(E_{0}) \in \pi \nn$. Then, the first $\ell$ verifying \eqref{eq:20.1} and $\ld_{\ell} \in \ringz$ is $\ell_{0} = \frac{Np}{\pi} \theta_{p,L}(E_{0})+1$. Next, consider the case $h_{j}(E_{0}) \in \frac{\pi}{2}+ \pi \nn$. Then, the first $\ell$ chosen is $\ell_{0}=\frac{Np}{\pi} \theta_{p,L}(E_{0})+\frac{1}{2}$. Put $\ell_{k}= \ell_{0}+k$ and we associate $\ell_{k}$ to $\lambda_{k}$, the $(k+1)-$th eigenvalue in $\mathring{B_{i}}$. Then, we always have 
\begin{equation}\label{eq:shift}
\theta_{p,L}(\ld_{k})-\theta_{p,L}(E_{0})= \frac{(k+1)\pi}{L-j} + \frac{c_{0}}{L-j}
\end{equation}   
where $c_{0}=0$ if $h_{j}(E_{0}) \in \pi \zz$ and $c_{0}=-\frac{\pi}{2}$ otherwise.\\
Hence, \eqref{eq:114} and \eqref{eq:shift} yield $\ld_{k}-E_{0} \asymp \frac{(k+1)^{2}}{L^{2}}$ for all $\ld_{k} \in I$ with $\ve$ small and $L$ large. Consequently, $k \lesssim \ve (L-j)$.\\
Finally, we will prove the inequality \eqref{eq:survive}.\\
Recall that the functions $\theta_{p}(E_{0}+x^{2})$ and $h_{j}(E_{0}+x^{2})$ are analytic in $x$ on the whole band $B_{i}$. Then, we can expand these functions near $0$ to get 
$$\theta_{p}(E_{0}+x^{2}) = \theta_{p, 0}+ \theta_{p, 1}x +\theta_{p,2}x^{2} + O(x^{3}) \text{ where $\theta_{p, 0}=\theta_{p}(E_{0})$ and $\theta_{p, 1} \ne 0$}; $$
$$h_{j}(E_{0}+x^{2})= h_{j,0} +h_{j,1}x +h_{j,2}x^{2}+O(x^{3}) \text{ where $h_{j,0}=h_{j}(E_{0})$}.$$
Put $x_{k}=\sqrt{\ld_{k}-E_{0}}$. We can assume that $\theta_{p,L}$ is increasing on $B_{i}$. Then, \eqref{eq:shift} and the above expansions yield 
\begin{equation}\label{eq:300}
\theta_{p,1}(L) x_{k} + \theta_{p,2}(L) x^{2}_{k} +O(x_{k}^{3}) =\frac{(k+1) \pi}{L-j}+\frac{c_{0}}{L-j}
\end{equation}
where $\theta_{p, m}(L)=\theta_{p,m}- \frac{h_{j,m}}{L-j}$ for all $m\in \nn$, $c_{0}=0$ if $h_{j}(E_{0}) \in \pi \zz$ and $c_{0}=-\frac{\pi}{2}$ otherwise. Note that $|\theta_{p,1}(L)|$ is lower bounded and upper bounded by positive constants independent of $L$.\\
W.ol.g., assume that $c_{0}=0$. Then, we have 
\begin{equation}\label{eq:301}
x_{k}=  \tilde{c}(L) \cdot \frac{(k+1)\pi}{(L-j)} \cdot \frac{1}{1+ x_{k}g_{L}(x_{k})}  
\end{equation}
where $|\tilde{c}(L)|$ is lower bounded and upper bounded by positive constants independent of $L$. Moreover, the function $g_{L}$ is analytic near $0$; $g_L$ and its derivative are bounded near $0$ by constants independent of $L$.\\
Let $0\leq n<k \leq \ve (L-j)$, the equation \eqref{eq:301} yield 
\begin{align}\label{eq:302}
x_{k}-x_{n}= &\tilde{c}(L) \cdot \frac{\pi (k-n)}{L-j}\cdot \frac{1}{1+x_{k}g_{L}(x_{k})}+\\
& \tilde{c}(L)\cdot \frac{n+1}{L}  \cdot \frac{x_{n}g_{L}(x_{n}) - x_{k}g_{L}(x_{k})}{\left(1+x_{k}g_{L}(x_{k}) \right) \left(1+x_{n}g_{L}(x_{n}) \right)}. \notag
\end{align}
Note that the second term of the right hand side (RHS) of \eqref{eq:302} is bounded by $\ve |x_{k}-x_{n}|$ up to a constant factor. Hence, there exists a constant $C$ such that, for all $n<k\leq \ve(L-j)$, 
\begin{equation}\label{eq:303}
\frac{1}{C} \cdot \frac{k-n}{L-j} \leq |x_{k}-x_{n}| \leq C\cdot \frac{k-n}{L-j}. 
\end{equation}
On the other hand, $x_{k} \asymp \frac{k+1}{L-j}$ and $x_{n}\asymp \frac{n+1}{L-j}$. \\
We thus have $|\ld_{k}-\ld_{n}|=|x^{2}_{k}-x^{2}_{n}| \asymp \frac{|k^{2}-n^{2}|}{L^{2}}$ for all $0 \leq n<k \leq \ve (L-j)$.
\end{proof}
\begin{remark}\label{R:spacing}
For $L$ large, the average  distance between two consecutive, distinct eigenvalues (the spacing) is $\frac{1}{L}$. Lemma \ref{L:asymptoticformula} says that, the spacing between eigenvalues near $\partial \Sigma_{\zz}$ is much smaller, the distance  between $\lambda^{i}_k$ and $\lambda^{i}_{k+1} \in I=[E_0, E_0+\ve_{1}]$ where $\ve_{1} \asymp \ve^{2}$ has magnitude $\frac{k+1}{L^2}$. This fact implies that the number of eigenvalues in the interval $I$ is asymptotically equal to $\ve L$ as $L \rightarrow +\infty$.
\end{remark}
\section{Small imaginary part}\label{S:sip}
First of all, we prove the following lemma which will be useful when we estimate the sum $S_{L}(E)$.
\begin{lemma} \label{L:outsider}
Pick $\eta>0$ and $E_{0}\in \partial \Sigma_{\zz}$. For $E\in J:=\left[E_{0}, E_{0}+\eta \right]+i \rr$, we define $S_{\text{out}}(E) = \sum\limits_{|\lambda_{k}-E_{0}|> 2\eta} \frac{a_{k}}{\lambda_{k}-E}$
 Then, 
\begin{equation}\label{eq:outsider1}
|S_{\text{out}}(E)| \leq \frac{1}{\eta} \text{ and } |\text{Im} S_{\text{out}}(E)| \leq \frac{ |\text{Im} E|}{\eta^{2}}
\end{equation}
and
\begin{equation}\label{eq:outsider2}
0< S'_{\text{out}}(E) \leq \frac{1}{\eta^{2}} \text{ for all $E \in \left[E_{0}, E_{0} + \eta \right]$ }. 
\end{equation}
\end{lemma}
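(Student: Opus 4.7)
The plan is to reduce everything to two elementary ingredients: the normalization identity $\sum_k a_k = 1$ (which follows from the fact that $\{\varphi_k\}$ is an orthonormal basis, so $\sum_k |\varphi_k(L)|^2 = \|\delta_L\|^2 = 1$) and a distance estimate of the form $|\lambda_k - E| > \eta$ for every $k$ appearing in the sum $S_{\text{out}}(E)$. Granting these two inputs, each of the three bounds in the lemma will follow by a one-line application of the crude estimate $|\cdot| \leq$ sum of absolute values.

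First I would establish the distance estimate. For $E \in J$, write $E = x + iy$ with $x \in [E_0, E_0 + \eta]$. If $|\lambda_k - E_0| > 2\eta$, the triangle inequality gives
\begin{equation*}
|\lambda_k - x| \geq |\lambda_k - E_0| - |E_0 - x| > 2\eta - \eta = \eta,
\end{equation*}
and hence $|\lambda_k - E|^2 = (\lambda_k - x)^2 + y^2 \geq (\lambda_k - x)^2 > \eta^2$. This uniform lower bound is the only geometric fact I need.

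From here the three bounds are immediate. For the first one, since $a_k \geq 0$ and $\sum_k a_k = 1$,
\begin{equation*}
|S_{\text{out}}(E)| \leq \sum_{|\lambda_k - E_0| > 2\eta} \frac{a_k}{|\lambda_k - E|} \leq \frac{1}{\eta}\sum_k a_k = \frac{1}{\eta}.
\end{equation*}
For the imaginary part, I would use $\operatorname{Im}\bigl(1/(\lambda_k - E)\bigr) = \operatorname{Im}(E)/|\lambda_k - E|^2$ and bound
\begin{equation*}
|\operatorname{Im} S_{\text{out}}(E)| \leq |\operatorname{Im} E| \sum_{|\lambda_k - E_0| > 2\eta} \frac{a_k}{|\lambda_k - E|^2} \leq \frac{|\operatorname{Im} E|}{\eta^2}.
\end{equation*}
For the derivative, when $E \in [E_0, E_0 + \eta]$ is real, differentiating termwise (legitimate since the series converges absolutely and uniformly in a neighborhood of $E$ thanks to the same distance bound) gives $S'_{\text{out}}(E) = \sum_{|\lambda_k - E_0| > 2\eta} a_k/(\lambda_k - E)^2$. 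Positivity is obvious, and the upper bound $S'_{\text{out}}(E) \leq 1/\eta^2$ follows once more from $|\lambda_k - E| > \eta$ together with $\sum_k a_k = 1$.

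There is no real obstacle here; the statement is a bookkeeping exercise. The only point worth being careful about is that the distance bound $|\lambda_k - E| > \eta$ holds for all $E \in J$ (not merely for real $E$), which is what allows the same argument to control both $|S_{\text{out}}|$ and $|\operatorname{Im} S_{\text{out}}|$ on the entire vertical strip $J$.
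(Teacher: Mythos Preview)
Your proof is correct and follows exactly the same route as the paper's own argument: both rest on the distance bound $|\lambda_k - E| > \eta$ for $E \in J$ together with the normalization $\sum_k a_k = 1$, and then read off the three estimates termwise. Your write-up is simply more explicit than the paper's two-line version.
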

\begin{proof}[Proof of Lemma \ref{L:outsider}] 
Note that $|\lambda_{k}-E| > \eta$ for all $|\lambda_{k}-E_{0}|>2\eta$ and $E\in J$. On the other hand, 
$\text{Im} S_{\text{out}}(E) = \text{Im}E \sum\limits_{|\lambda_{k}-E_{0}|>2\eta} \frac{a_{k}}{|\lambda_{k}-E|^{2}}$ and 
$S'_{\text{out}}(E)=\sum\limits_{|\lambda_{k}-E_{0}|>2\eta} \frac{a_{k}}{(\lambda_{k}-E)^{2}}$. Hence, the claim follows.
\end{proof}

Now, we will prove that the imaginary part of $S_L(E)=\sum\limits_{k=0}^L \frac{a_k}{\lambda_k-E}$ is small if $|\text{Im} E|$ is not too small.
\begin{lemma}\label{L:lesson}
Assume the same hypothesis of the boundary point $E_0$ and we use the same enumeration for eigenvalues in the band $B_{i}$ containing $E_{0}$ as in Lemma \ref{L:asymptoticformula}.\\
Pick $\ve>0$ small, $C_{1}, L$ large and $0 \leq n \leq \ve L/C_{1}$ . Consider the domain $\cA_{n, \ve}=[\frac{\lambda^{i}_{n-1}+\lambda^{i}_{n}}{2}, \frac{\lambda^{i}_{n}+\lambda^{i}_{n+1}}{2}] -i \left[C_0\frac{n+1}{L^2}, \ve^5 \right]$ where $C_0$ is a large constant and $\lambda^{i}_{-1}:=2E_0-\lambda_0$. Then, for all $E\in \cA_{n, \ve}$ with $\ve$ sufficiently small, we have 
\begin{equation}\label{eq:lesson}
|\text{Im} S_L(E)| \lesssim \ve.
\end{equation}
As a result, there are no resonances in $\cA_{n, \ve}$.
\end{lemma}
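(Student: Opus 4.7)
The plan is to prove $|\text{Im}\,S_L(E)|\lesssim \ve$ on $\cA_{n,\ve}$ by splitting $S_L(E)=\sum_k a_k/(\ld_k-E)$ into a contribution from eigenvalues far from $E_0$ and a contribution from eigenvalues close to $E_0$, estimating the two parts separately, and then deducing the absence of resonances by contradiction via the resonance equation \eqref{eq:resN}. Write $E=x-iy$ with $y\in[C_0(n+1)/L^2,\ve^5]$ and $x\in[\tfrac{\ld^i_{n-1}+\ld^i_n}{2},\tfrac{\ld^i_n+\ld^i_{n+1}}{2}]$, and fix $\eta=c\ve^2$ with $c>0$ large enough that $[E_0,E_0+\ve_1]\subset[E_0,E_0+\eta]$ but small enough that, for $\ve$ small, the neighbourhood $[E_0-2\eta,E_0+2\eta]$ contains no eigenvalue of $H_L$ other than the $\ld^i_k\in B_i$ close to $E_0$. (In the generic case this is legitimate: by Theorem \ref{T:spectraldata}(4) and Remark \ref{R:becomeeigenvalue}, eigenvalues of $H_L$ outside $\Sigma_{\zz}$ accumulate at a finite set disjoint from $E_0$, and the neighbouring band of $\Sigma_{\zz}$, if any, stays at a fixed positive distance from $E_0$.) Accordingly, split $S_L(E)=S_{\text{out}}(E)+S_{\text{in}}(E)$, with $S_{\text{out}}$ collecting the eigenvalues satisfying $|\ld_k-E_0|>2\eta$.

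Lemma \ref{L:outsider} gives immediately $|\text{Im}\,S_{\text{out}}(E)|\le y/\eta^2\lesssim \ve^5/\ve^4=\ve$. For the near piece, since $\text{Im}\,E=-y$,
\[
|\text{Im}\,S_{\text{in}}(E)|=y\sum_{\ld^i_k\in[E_0,E_0+2\eta]}\frac{a_k}{(\ld^i_k-x)^2+y^2}.
\]
Lemma \ref{L:nicefunction} gives $a_k=q(\ld^i_k)/(L-j)$ with $q(\ld)\asymp \ld-E_0$ and $q$ Lipschitz uniformly in $L$, while Lemma \ref{L:asymptoticformula} gives the local spacing $s_k:=\ld^i_{k+1}-\ld^i_k\asymp (k+1)/L^2$ together with $\ld^i_k-E_0\asymp (k+1)^2/L^2$. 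Combining these two estimates yields $a_k\asymp \sqrt{\ld^i_k-E_0}\,s_k$, so the sum above is a Riemann-type discretisation of
\[
I(x,y):=y\int_{E_0}^{E_0+2\eta}\frac{\sqrt{\ld-E_0}}{(\ld-x)^2+y^2}\,d\ld\le \sqrt{2\eta}\cdot y\int_{\rr}\frac{d\ld}{(\ld-x)^2+y^2}=\pi\sqrt{2\eta}\lesssim \ve.
\]

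The main technical obstacle is the Riemann-sum comparison itself, because the integrand has features on scale $y$, which can be very small. The key point is that the lower bound $y\ge C_0(n+1)/L^2$ together with $s_n\asymp (n+1)/L^2$ forces $s_n\le y/C_0$, so taking $C_0$ large makes the integrand vary by at most a constant factor on each interval $[\ld^i_k-s_k/2,\ld^i_k+s_k/2]$ with $k\asymp n$. For the remaining ranges $k\gg n$ and $k\ll n$, Lemma \ref{L:asymptoticformula} yields $|\ld^i_k-x|\gtrsim |\ld^i_k-\ld^i_n|\gtrsim |k-n|\,s_k$, which again controls the variation of the integrand on each spacing interval; summing the resulting estimates recovers $I(x,y)$ up to an absolute constant and establishes \eqref{eq:lesson}. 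The no-resonance claim then follows by contradiction: any resonance $E\in\cA_{n,\ve}$ satisfies the resonance equation \eqref{eq:resN}, whence $|\text{Im}\,S_L(E)|=e^{\text{Im}\,\theta(E)}|\sin(\text{Re}\,\theta(E))|$, and since $E_0\in(-2,2)$ we have $|\sin\theta(E_0)|=\sqrt{1-E_0^2/4}>0$; by continuity of $\theta$ the right-hand side is bounded below by a positive constant on $\cA_{n,\ve}$ for $\ve$ small, contradicting $|\text{Im}\,S_L(E)|\lesssim \ve$.
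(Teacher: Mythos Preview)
Your argument is correct, and the no-resonance conclusion is derived exactly as in the paper. The main difference lies in how you bound the near-$E_0$ contribution
\[
\sum_{\ld^i_k\in[E_0,E_0+2\eta]}\frac{a_k\,y}{(\ld^i_k-x)^2+y^2}.
\]
The paper proceeds by direct summation: it singles out the $k=n$ term, then splits the remaining sum into the regime $|\ld_k-x|\le y$ (a sum $S_1$, bounded via $\sum k^2$ over $k^2\in[n^2-CL^2y,\,n^2+CL^2y]$) and $|\ld_k-x|>y$ (a sum $S_2$, further split into $k^2>n^2+yL^2/C$ and $k^2<n^2-yL^2/C$), each piece handled by elementary manipulations of $\sum 1/(k-n)^2$. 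Your route is more conceptual: you recognise $a_k\asymp \sqrt{\ld_k-E_0}\,s_k$ and interpret the sum as a Riemann discretisation of $I(x,y)=y\int_{E_0}^{E_0+2\eta}\sqrt{\ld-E_0}\,[(\ld-x)^2+y^2]^{-1}\,d\ld\le \pi\sqrt{2\eta}\lesssim\ve$. The advantage of your approach is that the answer $\lesssim\sqrt{\eta}$ is read off immediately from the integral; the cost is that the Riemann-sum comparison has to be justified, and that justification (showing that on each interval $[\ld_k,\ld_{k+1}]$ either $s_k\lesssim y$ or $s_k\lesssim|\ld_k-x|$, so the denominator varies by at most a bounded factor) is precisely the dichotomy the paper exploits explicitly. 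In effect the two proofs use the same ingredients; yours packages them more cleanly but leaves the comparison step at the level of a sketch, whereas the paper writes out each sum in full.
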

\begin{proof}[Proof of Lemma \ref{L:lesson}]
Let $E= x-i y \in \cA_{n, \ve}$. Since Lemma \ref{L:asymptoticformula}, we can choose $C_{1}$ large enough so that $\lambda^{i}_{n} \leq E_{0}+\ve_{1}$ for all $n \leq \ve L/C_{1}$ and $\lambda^{i}_{k}>E_{0}+2\ve_{1}$ if $k>\ve L$ and $\lambda^{i}_{k} \in B_{i}$  where $\ve_{1}\asymp \ve^{2}$. Hence, Lemma \ref{L:outsider} yields that
\begin{equation}\label{eq:7.1}
|\text{Im} S_L(E)| \leq \frac{a^{i}_n y}{(\lambda^{i}_n-x)^2+y^2}+ \sum^{\ve L}_{\substack{k=0 \\ k\ne n}} \frac{a^{i}_k y}{(\lambda^{i}_k-x)^2+y^2}+ \ve
\end{equation}
where $\{a^{i}_{k}\}$ are $\{a_{k}\}$ renumbered w.r.t. the band $B_{i}$. For the sake of simplicity, we will skip the superscript $i$ in $\lambda^{i}_{k}$ and $a^{i}_{k}$ throughout the rest of proof.\\
Note that $\ld_{k} \asymp \frac{(k+1)^{2}}{L^{2}}$ and $a_{k} \asymp \frac{(k+1)^{2}}{L^{3}}$ for all $0 \leq k \leq \ve L$. Hence,
$$\frac{a_n y}{(\lambda_n-x)^2+y^2} \leq \frac{a_n}{y} \lesssim \frac{n+1}{L} \lesssim \ve.$$
Hence, it suffices to show that the sum 
\begin{equation}\label{eq:heart}
S= \sum^{\ve L}_{\substack{k=0 \\ k\ne n}}  \frac{a_k y}{(\lambda_k-x)^2+y^2} \lesssim \ve.
\end{equation}
To simplify our notations, from now on, we will not write $0 \leq k \leq \ve L$ in the sum. We upper bound $S$ as follows
\begin{equation}\label{eq:7.2}
S \leq  \sum_{\substack{k\ne n, \\ |\lambda_k-x| \leq y}}  \frac{a_k}{y} +\sum_{\substack{k\ne n, \\ |\lambda_k-x| \geq y}} \frac{a_k y}{(\lambda_k-x)^2} = S_1 +S_2.
\end{equation}
We will estimate $S_1$ first. For any index $k$ of the sum $S_1$, we have, for $C_0$ sufficiently large,
$$|\ld_k - \ld_n| \leq |\ld_k -x| + |x-\ld_n| \leq 2y.$$
Hence, $|k^2- n^2| \leq 2CL^2 y$ for some $C>0$. In the other words, 
$$(n^2- 2CL^2 y)_{+}  \leq k^2 \leq n^2 + 2CL^2 y$$
with $(x)_{+}=\max\{x, 0\}$. Hence, 
\begin{align} \label{eq:7.3}
S_1 &\lesssim \frac{1}{L^3 y} \sum_{k=\sqrt{(n^2- 2CL^2 y)_{+}}}^{\sqrt{n^2 + 2CL^2 y}} k^2 \lesssim \frac{1}{L^3 y}  \int_{\sqrt{(n^2- 2CL^2 y)_{+}}} ^{\sqrt{n^2 + 2CL^2 y}} x^2 dx \\ 
&\lesssim \frac{1}{L^3 y} \left( (n^2 + 2CL^2 y)^{3/2} -  (n^2- 2CL^2 y)_{+}^{3/2} \right) \notag.
\end{align}
If $n^2\leq 2CL^2 y$, the estimate \eqref{eq:7.3} yields that 
$$S_1 \lesssim \frac{1}{L^3 y} (L^2 y)^{3/2} \lesssim \sqrt{y}.$$
Otherwise, 
\begin{align*} 
S_1 \lesssim \frac{1}{L^3 y} \frac{(n^2 + 2CL^2 y)^{3} - (n^2- 2CL^2 y)^3}{(n^2 + 2CL^2 y)^{3/2} +  (n^2- 2CL^2 y)^{3/2}}
\lesssim \frac{1}{L^3 y} \frac{L^2 y n^4}{n^3} \lesssim \frac{n}{L}.
\end{align*}
To sum up, 
\begin{equation}\label{eq:7.4}
S_1 \lesssim \sqrt{y}+ \frac{n}{L}\lesssim \ve.
\end{equation}
Now, we will find a good upper bound for $S_2$. Repeating the argument as above, we only need to consider the sum w.r.t. to indices $k^2 \geq n^2 + \frac{1}{C} yL^2$ or $k^2 \leq n^2 -\frac{1}{C} y L^2$. We split and upper bound $S_2$ by two sums $S_3$ and $S_4$ which correspond to these two possibilities of index $k$.
\begin{align*}
S_3&= \sum_{k=\sqrt{n^2 + \frac{1}{C} yL^2}}^{\ve L} \frac{a_k y}{(\ld_k -x)^2} \lesssim \sum_{k=\sqrt{n^2 + \frac{1}{C} yL^2}}^{\ve L} \frac{a_k y}{(\ld_k -\ld_n)^2}\\
& \lesssim y L \sum_{k=\sqrt{n^2 + \frac{1}{C} yL^2}}^{\ve L} \frac{k^2}{(k-n)^2 (k+n)^2} \lesssim y L \sum_{k=\sqrt{n^2 + \frac{1}{C} yL^2}}^{\ve L} \frac{1}{(k-n)^2}\\ 
&\lesssim yL \sum_{k=\sqrt{n^2 + \frac{1}{C} yL^2}-n}^{\ve L-n} \frac{1}{k^2}\lesssim yL \frac{1}{\sqrt{n^2 + \frac{1}{C} yL^2}-n} \lesssim y  \frac{1}{\sqrt{\left(\frac{n}{L}\right)^2 + \frac{1}{C} y}-\frac{n}{L}}.
\end{align*}
Note that, for all $a, b>0$, $\frac{1}{\sqrt{a^2+b}-a} <  \sqrt{b}$. Hence, 
\begin{equation}\label{eq:7.5}
S_3 \lesssim y  \cdot \frac{\sqrt{y}}{\sqrt{C}} \leq \ve.
\end{equation}
Finally, we will estimate $S_4$. We only need to consider the case $y \leq C\frac{n^2}{L^2}$. Then,
\begin{align*}
S_4&= \sum_{k=0}^{\sqrt{n^2-\frac{1}{C}y L^2}} \frac{a_k y}{(\ld_k-x)^2} \lesssim \sum_{k=0}^{\sqrt{n^2-\frac{1}{C}y L^2}} \frac{a_k y}{(\ld_k-\ld_n)^2}\\
&\lesssim yL  \sum_{k=0}^{\sqrt{n^2-\frac{1}{C}y L^2}} \frac{k^2}{(n-k)^2(n+k)^2} \lesssim y L \sum_{n-\sqrt{n^2-\frac{1}{C}y L^2}}^n \frac{1}{k^2}.
\end{align*}
Note that $n-\sqrt{n^2-\frac{1}{C}y L^2}= \frac{\frac{1}{C} y L^2}{n+\sqrt{n^2-\frac{1}{C}y L^2}} \geq  \frac{yL^2}{2Cn}$. Hence, 
\begin{align}\label{eq:7.6}
S_4 \lesssim yL \sum_{\frac{yL^2}{2Cn}}^n \frac{1}{k^2} \lesssim yL \cdot \frac{n}{yL^2} \lesssim \ve.
\end{align}
Thanks to \eqref{eq:7.2} and \eqref{eq:7.4}-\eqref{eq:7.6}, the estimate \eqref{eq:heart}, hence, \eqref{eq:lesson} follows.\\ 
Note that, since $E_{0} \in (-2, 2)$, $\theta(E)= \arccos \frac{E}{2}$ is analytic and $|\sin (\text{Re} \theta(E))| \gtrsim 1$ near $E_{0}$. Consequently, for any $E\in [E_0, E_0+\ve_{1}] -i[0, \ve^5]$, there exists a constant $c_{0}>0$ such that
\begin{equation*}\label{eq:7.7}
|\text{Im} e^{-i \theta(E)}| = e^{\text{Im}\theta(E)} |\sin (\text{Re} \theta(E))| \geq c_0. 
\end{equation*}
 Hence, there are no resonances in $\cA_{n, \ve}$.
\end{proof}
\section{Resonances closest to the real axis}\label{S:proofmain}
In the present section, we will give a proof for Theorem \ref{T:maingeneric} which describes the resonances closest to the real axis. To do so, we will apply Rouch\'{e}'s theorem to show the existence and uniqueness of resonances in each rectangle $\mathcal M_n=\left[\frac{\lambda^{i}_{n-1}+\lambda^{i}_{n}}{2}, \frac{\lambda^{i}_{n}+\lambda^{i}_{n+1}}{2}\right] -i \left[0, C_0\frac{n+1}{L^2}\right]$ for $0\leq n \leq \ve L/C_{1}$ with $C_{0}, C_{1}>0$ large. Next, we derive the asymptotic formulae for resonances.\\
Corresponding to the case $n=0$, we will apply Rouch\'{e}'s theorem in the rectangle\\ 
$\left [E_{0}-\ve, \frac{\lambda^{i}_{0}+\lambda^{i}_{1}}{2}\right] -i \left[0, \frac{C_{0}}{L^2}\right]$ instead of $\left [E_{0}, \frac{\lambda^{i}_{0}+\lambda^{i}_{1}}{2}\right] -i \left[0, \frac{C_{0}}{L^2}\right]$. Next, we will prove that the unique resonance $z_{0}$ in this rectangle stays close to $\ld^{i}_0$ at a distance $\frac{1}{L^3}$. Consequently, there are no resonances in $[E_{0}-\ve, E_{0}] - i \left[0, C_0 \frac{n+1}{L^2} \right]$ and $z_{0}$ belongs to $\left [E_{0}, \frac{\lambda^{i}_{0}+\lambda^{i}_{1}}{2}\right] -i \left[0, \frac{C_{0}}{L^2}\right]$. Such a result is needed to study resonances below $\rr \backslash \Sigma_{\nn}$ in Section \ref{S:proofmain1}.\\ 
For that purpose, in Lemmata \ref{L:difference} and \ref{L:comparegeneric}, we will use a different convention for $\lambda^{i}_{-1}$ from that in Theorem \ref{T:maingeneric}. Concretely, we put $\lambda^{i}_{-1}:=2(E_{0}-\ve)-\lambda^{i}_{0}$ instead of $2E_0 -\ld^i_{0}$ in these lemmata.    

Note that, by Lemma \ref{L:outsider}, when we study the resonance equation near a boundary point $E_{0}$, only eigenvalues inside the spectrum and near $E_{0}$ need taking into account. In order to simplify the notation and the presentation, we will prove our results for $E_{0}=\inf \Sigma_{\zz}$. For an arbitrary $E_{0} \in \partial \Sigma_{\zz}$, all proofs work with tiny modifications. Note that when $E_0 =\inf \Sigma_{\zz}$ and if we ignore eigenvalues of $H_L$ outside $\Sigma_{\zz}$, on the band containing $E_{0}$, two enumerations of eigenvalues (a usual one with increasing order and the other w.r.t. to bands of $\Sigma_{\zz}$) are the same. From now on, we will skip the superscript $i$ in $\lambda^{i}_{k}, a^{i}_{k}$ and the sum $S^{i}_{n,L}(E)$ defined in Theorem \ref{T:maingeneric} can be written simply as \begin{equation}\label{eq:sum}
S_{n,L}(E)= \sum_{\substack{ k=0 \\
k\ne n}}^{L} \frac{a_k}{\ld_k-E}.
\end{equation}
In order to use Rouch\'{e}'s theorem, we will need two following useful lemmata.
Lemma \ref{L:lowerbound*} gives us an estimate on the sum $S_{n,L}(\ld_n)$ and Lemma \ref{L:difference} show that, in $\cM_n$, $S_{n,L}(E)$ can be approximated by $S_{n,L}(\ld_n)$ with a small error.\\
Let's take a look at the sum $S_{n,L}(\ld_{n})$. First of all, the part of the sum w.r.t. $k >\ve L$ is bounded by a constant depending only on $\ve$. Next, from the asymptotic of $a_{k}$ and $\ld_k$ near $\partial \Sigma_{\zz}$, it is easy to check that, in the absolute value, the sums $\sum\limits_{k=0}^{n-1}\frac{a_k}{\ld_k- \ld_n}$ and $\sum\limits_{k=n+1}^{\ve L}\frac{a_k}{\ld_k- \ld_n}$ are of the same order $\frac{n \ln n}{L} \xrightarrow{L\rightarrow +\infty} +\infty$ if $n$ is large ($n=\ve L$ for example). However, we note that these two sums have opposite signs. We can actually show that they will cancel each other out to become very small (smaller than $\ve$ up to a constant factor). To make such a cancellation effect appear, the results on the smoothness of spectral data near $\partial \Sigma_{\zz}$ in Section \ref{S:smoothness} will be used.
\begin{lemma}\label{L:lowerbound*}
Let $\ve>0$ small and $0 \leq n \leq \ve L/C_{1}$ with $C_{1}$ large.\\ 
For $E\in \mathbb C$, let $S_{n,L}(E)$ be defined as in \eqref{eq:sum}. Then,
\begin{equation}\label{eq:7.24}
|S_{n,L}(\ld_n)| \lesssim  \frac{1}{\ve^2}.
\end{equation}
\end{lemma}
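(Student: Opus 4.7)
The plan is to split $S_{n,L}(\ld_{n})$ into a far part and a near part, bound each separately, and combine. Specifically, I would fix $\eta\asymp\ve^{2}$ slightly larger than $\ve_{1}$, large enough that Lemma \ref{L:asymptoticformula} guarantees $\ld_{n}\in[E_{0},E_{0}+\eta]$ for every $n\leq\ve L/C_{1}$. Writing $S_{n,L}(\ld_{n})=S_{\mathrm{out}}+S_{\mathrm{in}}$, where $S_{\mathrm{out}}$ collects the indices $k\neq n$ with $|\ld_{k}-E_{0}|>2\eta$ and $S_{\mathrm{in}}$ the remaining ones, Lemma \ref{L:outsider} immediately yields $|S_{\mathrm{out}}|\leq 1/\eta\lesssim 1/\ve^{2}$, which already accounts for the bulk of the claimed estimate.

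For $S_{\mathrm{in}}$, by Lemma \ref{L:asymptoticformula} the indices satisfy $k\lesssim \ve L$, so there are at most $O(\ve L)$ such terms. The natural move is to invoke the Lipschitz bound \eqref{eq:lip} from Lemma \ref{L:nicefunction} and decompose
\begin{equation*}
\frac{a_{k}}{\ld_{k}-\ld_{n}}=\frac{a_{n}}{\ld_{k}-\ld_{n}}+\frac{a_{k}-a_{n}}{\ld_{k}-\ld_{n}}.
\end{equation*}
The second piece is pointwise bounded by $C_{\mathrm{lip}}/(L-j)$, so its total contribution is $O(\ve)$. The crux then reduces to estimating $a_{n}T_{1}$ with $T_{1}=\sum_{k\neq n,\ \ld_{k}\in[E_{0},E_{0}+2\eta]}(\ld_{k}-\ld_{n})^{-1}$. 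Here one cannot take absolute values inside the sum, since the naive bound would grow like $\log L$; genuine cancellation has to be exploited.

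To extract this cancellation, I would use the asymptotic \eqref{eq:301} for $x_{k}=\sqrt{\ld_{k}-E_{0}}$ to write, for $k,n\lesssim \ve L$,
\begin{equation*}
\ld_{k}-\ld_{n}=(x_{k}-x_{n})(x_{k}+x_{n})=\frac{\bigl(\tilde{c}(L)\pi\bigr)^{2}(k-n)(k+n+2)}{(L-j)^{2}}\bigl(1+O(\ve)\bigr),
\end{equation*}
and then apply the partial-fraction identity
\begin{equation*}
\frac{1}{(k-n)(k+n+2)}=\frac{1}{2(n+1)}\left(\frac{1}{k-n}-\frac{1}{k+n+2}\right).
\end{equation*}
Summation reduces the sum to differences of harmonic sums, which collapse up to a boundary term of order $(n+1)/(\ve L)$. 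This yields $T_{1}=O\bigl(L^{2}/(n+1)^{2}\bigr)+O(L/\ve)$. Combined with the generic asymptotic $a_{n}\asymp (n+1)^{2}/L^{3}$ from Remark \ref{R:nearboundary}, one gets $|a_{n}T_{1}|\lesssim 1/L+(n+1)^{2}/(\ve L^{2})\lesssim 1$.

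Assembling the three contributions gives $|S_{n,L}(\ld_{n})|\lesssim 1/\ve^{2}+\ve+1\lesssim 1/\ve^{2}$, as required. The main obstacle is the cancellation step in $T_{1}$: it hinges on the leading coefficient $\tilde{c}(L)$ in the linearization of $x_{k}$ being \emph{independent of $k$} (which is exactly what \eqref{eq:301} supplies), and on carefully tracking both the multiplicative correction $1+O(\ve)$ coming from the nonlinearity of $x_{k}$ in $k$ and the harmonic boundary terms that arise when truncating the partial-fraction sum at $k\sim \ve L$.
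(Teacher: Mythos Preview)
Your overall strategy—splitting off the far part via Lemma~\ref{L:outsider}, using the Lipschitz bound~\eqref{eq:lip} on $a_k-a_n$, and isolating $a_nT_1$—is exactly the paper's. The gap lies in the cancellation step for $T_1$. Writing $\ld_k-\ld_n=A^2(k-n)(k+n+2)(1+e_{k,n})$ with $A\asymp 1/L$, one has $|e_{k,n}|\lesssim\max(k,n)/L\lesssim\ve$, and the partial-fraction identity indeed handles the leading piece. But if $e_{k,n}$ is treated as a black-box $O(\ve)$ factor, the residual contribution to $T_1$ is only controlled by
\[
L^2\sum_{\substack{k\neq n\\0\le k\le\ve L}}\frac{|e_{k,n}|}{|k-n|(k+n+2)}
\ \lesssim\ L\sum_{\substack{k\neq n\\0\le k\le\ve L}}\frac{\max(k,n)}{|k-n|(k+n+2)}
\ \asymp\ L\ln(\ve L),
\]
which after multiplication by $a_n\asymp(n+1)^2/L^3$ gives $(n+1)^2L^{-2}\ln L$; for $n\asymp\ve L$ this is $\ve^2\ln L$, unbounded as $L\to\infty$. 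So the stated conclusion $T_1=O(L^2/(n+1)^2)+O(L/\ve)$ does not follow from the $(1+O(\ve))$ approximation alone. Removing the logarithm would require expanding $e_{k,n}$ to the next order and exploiting further sign cancellation—essentially using $C^2$ information on $\theta_{p,L}^{-1}$ anyway.

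The paper organizes the near sum differently to avoid this. It splits $\sum_{k=0,k\neq n}^{\ve L}$ into $\sum_{k=2n+1}^{\ve L}$, which is bounded directly by $O(\ve)$ (all terms share a sign and $k^2/(k^2-n^2)\asymp1$, so no logarithm arises), and $\sum_{k=0,k\neq n}^{2n}$. In the latter, after the Lipschitz step, the remainder $a_n\sum_{k=0,k\neq n}^{2n}(\ld_k-\ld_n)^{-1}$ is handled by \emph{pairing} $k\leftrightarrow 2n-k$ and invoking Lemma~\ref{L:smoothness} (the $C^2$ regularity of $\theta_{p,L}^{-1}$) to obtain the second-difference bound $\ld_k+\ld_{2n-k}-2\ld_n=O((n-k)^2/L^2)$. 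This converts the summand into one with no $|k-n|^{-1}$ singularity and yields $\lesssim n/L\lesssim\ve$ with no logarithmic loss.
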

\begin{proof}[Proof of Lemma \ref{L:lowerbound*}]
First of all, since Lemma \ref{L:asymptoticformula}, we can choose $C_{1}$ chosen to be large, we can assume that $\lambda_n\leq E_0+\ve_{1}$ and $\lambda_k \geq E_0+2\ve_1$ with some $\ve_1 \asymp \ve^2$ for $k \geq \ve L$. 
Hence, Lemma \ref{L:outsider} yield
\begin{equation}\label{eq:7.25}
S_{n,L}(\ld_n) \lesssim \left | \sum_{\substack{ k=0 \\
		k\ne n}}^{2n}\frac{a_k}{\ld_k-\ld_n}\right | + \sum_{k=2n+1}^{\ve L}\frac{a_k}{\ld_k-\ld_n}  + \frac{1}{\ve^{2}}.
\end{equation}
Next, we estimate the sum $T=\sum\limits_{k=2n+1}^{\ve L}\frac{a_k}{\ld_k-\ld_n}$. Recall that $|\ld_{n}-\ld_{k}| \asymp \frac{|k^{2}-n^{2}|}{L^{2}}$ and $a_{k} \asymp \frac{k^{2}}{L^{3}}$. Hence,
\begin{align} \label{eq:7.26}
T &\asymp \frac{1}{L}\sum_{k=2n+1}^{\ve L}\frac{k^2}{k^2-n^2} \asymp \frac{1}{L}(\ve L-2n-1) +\frac{n^2}{L}\sum_{2n+1}^{\ve L} \frac{1}{(k-n)(k+n)} \notag \\ 
&\asymp \ve +\frac{n}{L} \sum_{k=2n+1}^{\ve L} \left(\frac{1}{k-n}-\frac{1}{k+n}\right) \asymp \ve +\frac{n}{L} \left( \sum_{k=n}^{3n} \frac{1}{k}-\sum_{k=\ve L-n}^{\ve L+n} \frac{1}{k}\right)\\
&\asymp \ve + \frac{n}{L} \asymp \ve \notag.
\end{align}
Now we will show that the sum $ \left | \sum\limits_{\substack{ k=0 \\ k\ne n}}^{2n}\frac{a_k}{\ld_k-\ld_n} \right | \lesssim \ve$. We rewrite
\begin{align*}
S=  \sum\limits_{\substack{ k=0 \\
		k\ne n}}^{2n}\frac{a_k-a_n}{\lambda_k-\lambda_n} +a_n \sum\limits_{\substack{ k=0 \\
k\ne n}}^{2n}\frac{1}{\ld_k-\ld_n}=S_1 +S_2.
\end{align*}
First, we will estimate $S_1$. Thanks to Lemma \ref{L:nicefunction}, we have
\begin{equation}\label{eq:7.33}
|S_1| \leq 2C_{\text{lip}} \cdot \frac{n}{L} \lesssim \ve. 
\end{equation}
Second, we consider the sum $S_2$.
\begin{align}\label{eq:7.34}
S_2= a_n \left(\sum_{k=0}^{n-1} \left( \frac{1}{\lambda_k-\lambda_n}+\frac{1}{\lambda_{2n-k}-\lambda_n}\right) \right) =a_n \sum_{k=0}^{n-1}\frac{\lambda_k+\lambda_{2n-k}-2\lambda_n}{(\lambda_k-\lambda_n)(\lambda_{2n-k}-\lambda_n)}.
\end{align}
Assume that $L=Np+j$ with $0 \leq j \leq p-1$. By Lemma \ref{L:smoothness}, for each $k\leq 0\leq n-1$, we can write $\lambda_k= \psi\left(\frac{k}{L}\right)$ where  
$\psi(x)$ is a $C^2$ function near $E_{0}$. Moreover, its second derivative near $E_{0}$ is bounded by a constant independent of $L$. Hence, we can apply the Taylor's expansion of the order $2$ for the function $\psi(x)$ to get 
\begin{equation}\label{eq:7.35}
\lambda_k+\lambda_{2n-k}-2\lambda_n= \psi\left(\frac{k}{L}\right) + \psi\left(\frac{2n-k}{L}\right)-2 \psi\left(\frac{n}{L}\right)= O\left(\frac{(n-k)^2}{L^2}\right). 
\end{equation}
By \eqref{eq:7.34} and \eqref{eq:7.35}, we infer that 
\begin{align} \label{eq:7.37}
|S_2| &\lesssim \frac{n^2}{L} \sum_{k=0}^{n-1} \frac{(n-k)^2}{\left(n^2-k^2)((2n-k)^2-n^2\right)} \lesssim \frac{n^2}{L} \sum_{k=0}^{n-1} \frac{1}{(n+k)(3n-k)} \notag\\ 
&\lesssim \frac{n}{L} \left(\sum_{k=0}^{n-1}\frac{1}{n+k} + \sum_{k=0}^{n-1}\frac{1}{3n-k} \right) \lesssim \ve. 	
\end{align}
To sum up, thanks to \eqref{eq:7.33} and \eqref{eq:7.37}, we have $|S_{n,L}(\lambda_n)| \lesssim \frac{1}{\ve^2}$.
\end{proof}
For $E\in \mathcal M_{n}$, we compare $S_{n,L}(E)$ with $S_{n,L}(\lambda_{n})$.
\begin{lemma}\label{L:difference}
Pick $C_{1}, C_{0}$ large, $\ve>0$ small and $0\leq n \leq \ve L/C_{1}$.\\ Let $\mathcal M_n=\left[\frac{\lambda_{n-1}+\lambda_{n}}{2}, \frac{\lambda_{n}+\lambda_{n+1}}{2}\right] -i \left[0, C_0\frac{n}{L^2}\right]$ and $S_{n,L}(E)$ be defined as in Lemma \ref{L:lowerbound*}.\\ 
We use the convention $\ld_{-1}= 2(E_{0}-\ve) - \ld_{0}$.\\ 
Then, for all $E\in \cM_n$, we have 
\begin{equation}\label{eq:difference}
\left |  S_{n, L}(E) -S_{n, L}(\ld_n) \right| \lesssim L|E-\lambda_n| \lesssim \frac{n}{L}.
\end{equation}
\end{lemma}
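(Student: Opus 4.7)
The plan is to start from the partial-fraction identity
\begin{equation*}
S_{n,L}(E) - S_{n,L}(\lambda_n) \;=\; (E - \lambda_n) \sum_{k \neq n} \frac{a_k}{(\lambda_k - E)(\lambda_k - \lambda_n)}
\end{equation*}
and show that the sum on the right is $O(L)$ uniformly in $E \in \cM_n$. The second inequality $L|E - \lambda_n| \lesssim n/L$ is then immediate from the size of $\cM_n$: for $n \geq 1$, $|\mathrm{Re}(E-\lambda_n)|$ is controlled by $(\lambda_{n+1} - \lambda_{n-1})/2 \asymp n/L^2$ thanks to Lemma~\ref{L:asymptoticformula}, while $|\mathrm{Im}\,E| \leq C_0 n/L^2$ by definition.

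To bound the sum, I would first isolate the contribution of eigenvalues with $|\lambda_k - E_0| > 2\varepsilon_1$, where $\varepsilon_1 \asymp \varepsilon^2$. For those indices both $|\lambda_k - E|$ and $|\lambda_k - \lambda_n|$ exceed $\varepsilon_1$, and since $\sum_k a_k = 1$, this outside portion is bounded by $\varepsilon_1^{-2} \lesssim \varepsilon^{-4}$, which is $O(L)$ once $L$ is large (with $\varepsilon$ fixed). The remaining \emph{inside} indices ($k \lesssim \varepsilon L$) carry the real content. The crucial elementary observation is that by the very design of $\cM_n$, namely $\mathrm{Re}\,E \in \left[\tfrac{\lambda_{n-1}+\lambda_n}{2}, \tfrac{\lambda_n+\lambda_{n+1}}{2}\right]$, one has
\begin{equation*}
|\lambda_k - E| \;\geq\; \tfrac{1}{2}|\lambda_k - \lambda_n| \qquad \text{for every } k \neq n,\ E \in \cM_n,
\end{equation*}
since for $k \geq n+1$ one has $\mathrm{Re}(\lambda_k - E) \geq \lambda_k - \tfrac{\lambda_n + \lambda_{n+1}}{2} \geq \tfrac{\lambda_k - \lambda_n}{2}$, and symmetrically for $k \leq n-1$.

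Substituting the asymptotics $a_k \asymp k^2/L^3$ and $|\lambda_k - \lambda_n| \asymp |k^2 - n^2|/L^2$ from Lemma~\ref{L:asymptoticformula}, the inside contribution is dominated, up to a constant, by
\begin{equation*}
L \sum_{\substack{1 \leq k \leq \varepsilon L \\ k \neq n}} \frac{k^2}{(k-n)^2(k+n)^2}.
\end{equation*}
The main obstacle is to show that this last sum is $O(1)$ uniformly in $n$: this is exactly what extracts the factor of $L$ demanded by the statement. Splitting by the relative size of $k$ and $n$: for $|k - n| \leq n/2$ one has $k + n \asymp n$ and the summand is $\lesssim (k-n)^{-2}$, which sums to $O(1)$ over nonzero differences; for $k \geq 3n/2$ both $k - n$ and $k + n$ are $\asymp k$, so the summand is $\lesssim k^{-2}$ and again sums to $O(1)$. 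Assembling the inside and outside estimates gives $|S_{n,L}(E) - S_{n,L}(\lambda_n)| \lesssim L|E - \lambda_n|$, and the advertised bound $\lesssim n/L$ follows from the size estimate on $\cM_n$.
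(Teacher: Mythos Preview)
Your argument is essentially the paper's: both start from the identity $S_{n,L}(E)-S_{n,L}(\lambda_n)=(E-\lambda_n)\sum_{k\neq n}\frac{a_k}{(\lambda_k-E)(\lambda_k-\lambda_n)}$, use $|\lambda_k-E|\gtrsim|\lambda_k-\lambda_n|$ on $\cM_n$ (the paper leaves this implicit when it passes to $\sum a_k/(\lambda_k-\lambda_n)^2$), separate the far eigenvalues, and reduce the near part to $L\sum_{k\neq n}\frac{k^2}{(k-n)^2(k+n)^2}=O(L)$. One small omission: your case split handles $|k-n|\le n/2$ and $k\ge 3n/2$ but skips $1\le k< n/2$; there $|k-n|>n/2$ and $k+n>n$, so the summand is $\lesssim k^2/n^4\le 1/n^2$ and the total over those $k$ is $O(1/n)$, so nothing is lost.
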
 
\begin{proof}[Proof of Lemma \ref{L:difference}]
By the definition of $S_{n,L}(E)$, we have
\begin{align}\label{eq:7.9}
\left |  S_{n, L}(E) -S_{n, L}(\ld_n) \right| &\leq |\ld_n-E| \sum_{k\ne n} \frac{a_k}{|\ld_k-E| |\ld_k-\ld_n|}\\ 
&\lesssim  |\ld_n-E| \sum_{k\ne n} \frac{a_k}{(\ld_k-\ld_n)^{2}} \notag .
\end{align}
First of all, we observe that 
\begin{align}\label{eq:7.10}
S_1= 
\sum_{k=0}^{n-1} \frac{a_k}{(\ld_k-\ld_n)^2}\lesssim L \sum_{k=0}^{n-1} \frac{(k+1)^2}{(n-k)^2(n+k)^2}
\lesssim  L \sum_{k=1}^{n} \frac{(n-k+1)^2}{k^2(2n-k)^2} \lesssim L.
\end{align}
Next, 
\begin{align}\label{eq:7.11}
S_2= 
\sum_{k=n+1}^{\ve L} \frac{a_k}{(\ld_k-\ld_n)^2} \lesssim L \sum_{k=n+1}^{\ve L} \frac{k^2}{(k-n)^2(k+n)^2}
\lesssim  L \sum_{k=1}^{\ve L -n} \frac{(k+n)^2}{k^2(k+2n)^2} \lesssim L .
\end{align}
Finally, put 
$S_3= 
\sum\limits_{k>\ve L} \frac{a_k}{(\ld_k-\ld_n)^{2}}.$ 
We can apply Lemma \ref{L:outsider} to infer that $S_{3}$ is bounded by a constant depending only on $\ve$.
Combining this with \eqref{eq:7.9}-\eqref{eq:7.11}, the claim follows.
\end{proof}
Now we will make use of the above lemma to show the existence and uniqueness of resonances in each rectangle $\cM_{n}$ with 
$0\leq n \leq \ve L/C$.
\begin{lemma}\label{L:comparegeneric}
Pick $C_{1}, C_{0}>0$ large, $\ve>0$ small and $0\leq n \leq \ve L/C_{1}$.
Assume that $\cM_n$ is the rectangle defined in Lemma \ref{L:difference} with the convention $\ld_{-1}=2(E_{0}-\ve) -\ld_{0}$. Let $f(E):=S_L(E)+e^{-i\theta(E)}$ and $g_n(E):= \frac{a_n}{\ld_n-E} + S_{n,L}(\ld_n)+e^{-i \theta(\ld_n)}$ where $S_{n, L}(E)$ is defined in \eqref{eq:sum}.\\
Then, $f$ and $g$ have the same number of zeros in $\cM_{n}.$ As a result, there is a unique resonance in $\cM_{n}$.
\end{lemma}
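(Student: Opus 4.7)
The plan is to apply Rouch\'e's theorem to $f$ and $g_n$ on $\partial\cM_n$. Since both functions carry a simple pole at $\ld_n$, which sits on the top edge of $\cM_n$, I first clear the pole by passing to the analytic auxiliaries $F(E):=(\ld_n-E)\,f(E)$ and $G(E):=(\ld_n-E)\,g_n(E)=a_n+(\ld_n-E)\alpha_n$. Since $F(\ld_n)=G(\ld_n)=a_n\neq 0$, neither auxiliary gains a spurious zero at $\ld_n$, and the zeros of $F,G$ in $\cM_n$ coincide with those of $f,g_n$ there. The linear $G$ has its unique zero at $E^{*}_n:=\ld_n+a_n/\alpha_n$, and from $a_n\asymp(n+1)^2/L^3$, $|\alpha_n|\geq c_0$, and $\text{Im}\,\alpha_n=-\sin\theta(\ld_n)>0$, this point lies strictly inside $\cM_n$ provided $C_0$ is chosen large enough to dominate $|\text{Im}\,E^{*}_n|\asymp a_n/|\alpha_n|^2$.

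Next I would bound $|F-G|$ on $\partial\cM_n$. Writing $F(E)-G(E)=(\ld_n-E)\bigl[S_{n,L}(E)-S_{n,L}(\ld_n)+e^{-i\theta(E)}-e^{-i\theta(\ld_n)}\bigr]$ and combining Lemma~\ref{L:difference} with the Lipschitz regularity of $\theta$ on compact subintervals of $(-2,2)$, I get $|F-G|\le CL|E-\ld_n|^2$ on the ``standard'' portion of $\partial\cM_n$ (the whole boundary when $n\geq 1$; the right edge, the bottom edge, and the slice $\text{Re}\,E\geq E_0$ of the top edge when $n=0$). For the extended portion of $\partial\cM_0$ where $\text{Re}\,E\in[E_0-\ve,E_0]$ and $|E-\ld_0|\asymp\ve$, Lemma~\ref{L:difference} is too crude; instead I exploit $|\ld_k-E|\gtrsim\ve$ for every $k$ and split the sum $|\ld_0-E|\sum_{k\ne 0}a_k/(|\ld_k-E||\ld_k-\ld_0|)$ using the asymptotics $a_k\asymp(k+1)^2/L^3$ and $|\ld_k-\ld_0|\asymp k^2/L^2$ from Section~\ref{S:smoothness}, together with Lemma~\ref{L:outsider} for the tail, obtaining $|S_{0,L}(E)-S_{0,L}(\ld_0)|\lesssim\ve$ and hence $|F-G|\lesssim\ve^2$ on that piece.

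For the lower bound I would use $|G(E)|=|\alpha_n|\,|E-E^{*}_n|$ and argue edge by edge. On the top edge $E$ is real, so $|E-E^{*}_n|\geq|\text{Im}\,E^{*}_n|\asymp a_n/|\alpha_n|^2$ and $|G|\gtrsim a_n/|\alpha_n|$. On the bottom edge, choosing $C_0$ large gives $|E-E^{*}_n|\gtrsim C_0(n+1)/L^2$. On the left and right edges for $n\geq 1$ (and the right edge for $n=0$), the horizontal separation $|\text{Re}(E-\ld_n)|\asymp(n+1)/L^2$ dominates $|E^{*}_n-\ld_n|\lesssim(n+1)^2/L^3$ since $(n+1)/L$ is small. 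On the extended left edge of $\cM_0$, $|\ld_0-E|\gtrsim\ve$ and $|g_0(E)|\geq|\alpha_0|-a_0/\ve\geq c_0/2$ for $L$ large, yielding $|G|\gtrsim\ve$.

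Combining the two sides piece by piece, $|F-G|/|G|$ on each is a constant multiple of $(n+1)/L\leq\ve/C_1+1/L$, of $|\alpha_n|(n+1)^2/L^2$, or of $\ve$; taking $C_0,C_1$ large, $\ve$ small, and $L$ large makes each of these smaller than $1$. Rouch\'e's theorem then shows $F$ and $G$ share the same number of zeros in $\cM_n$, which is $1$ (the zero $E^{*}_n$ of $G$), so $f$ has exactly one zero---the desired resonance---in $\cM_n$. The main obstacle is the extended left portion for $n=0$: the Lemma~\ref{L:difference} bound $|F-G|\lesssim L|E-\ld_0|^2$ becomes $L\ve^2$ there, which is far too large; the required $\lesssim\ve^2$ can only be reached through the independent, finer summation argument supplied by the spectral asymptotics of Section~\ref{S:smoothness}.
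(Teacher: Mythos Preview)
Your device of clearing the pole by passing to $F=(\lambda_n-E)f$ and $G=(\lambda_n-E)g_n$ is a clean way to make Rouch\'e rigorous, and since $|F-G|/|G|=|f-g_n|/|g_n|$ away from $\lambda_n$, the inequality you must verify is literally the same one the paper proves. The real divergence is in how $|g_n|$ is bounded below on $\partial\mathcal{M}_n$. The paper does not track $|E-E^*_n|$ edge by edge; it observes that on the boundary the pole term $a_n/(\lambda_n-E)$ contributes at most $P=a_n|\text{Im}\,E|/|\lambda_n-E|^2\lesssim(n+1)/L$ to $|\text{Im}\,g_n|$ (and nothing on the top edge, where $\text{Im}\,E=0$), while $\text{Im}\,\alpha_n=-\sin\theta(\lambda_n)$ contributes a fixed amount $\ge c_0$. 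Thus $|g_n|\ge|\text{Im}\,g_n|\ge c_0$ \emph{uniformly} on $\partial\mathcal{M}_n$, and since $|f-g_n|\lesssim(n+1)/L$ uniformly as well, Rouch\'e closes in one stroke with no case distinction.

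Your edge-by-edge estimate of $|G|=|\alpha_n|\,|E-E^*_n|$ has a genuine gap on the top edge. There you record only $|E-E^*_n|\ge|\text{Im}\,E^*_n|\asymp a_n/|\alpha_n|^2$, hence $|G|\gtrsim a_n/|\alpha_n|$; but near the corners of the top edge $|E-\lambda_n|\asymp(n+1)/L^2$, so $|F-G|\lesssim L|E-\lambda_n|^2\asymp(n+1)^2/L^3\asymp a_n$, and the resulting ratio is $\asymp|\alpha_n|$, which is $\ge c_0$ and can be as large as $1/\varepsilon^2$---not small. (The ratio $|\alpha_n|(n+1)^2/L^2$ you list does not arise from the bounds you actually wrote down.) The remedy is to also use $|E-E^*_n|\ge|E-\lambda_n|-a_n/|\alpha_n|$ on the top edge, which recovers the good ratio $\lesssim(n+1)/L$ once $|E-\lambda_n|\ge 2a_n/|\alpha_n|$; but this is exactly the paper's uniform imaginary-part bound in disguise, so the extra bookkeeping of $F,G$ buys nothing. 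Finally, on the extended $n=0$ boundary your assertion ``$|\lambda_k-E|\gtrsim\varepsilon$ for every $k$'' holds only on the left edge $\text{Re}\,E=E_0-\varepsilon$; on the extended top and bottom segments with $\text{Re}\,E$ close to $E_0$ one has $|\lambda_k-E|\asymp k^2/L^2$ for small $k$, and your finer summation does not deliver $|S_{0,L}(E)-S_{0,L}(\lambda_0)|\lesssim\varepsilon$ there.
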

\begin{proof}[Proof of Lemma \ref{L:comparegeneric}]
Note that if $\ld_{k}$ is an eigenvalue of $H_{L}$ which stays outside $\Sigma_{\zz}$, it is exponentially close to one of isolated simple eigenvalues of $H^{+}_{0}$ or $H^{-}_{j}$ with $L=Np+j$ (see Theorem \ref{T:spectraldata}). Hence, we can choose $\ve$ to be sufficiently small such that $[E_{0}-\ve, E_{0}] \cap \sigma(H_{L})= \emptyset$ for all $L$ large.\\  
Consequently, $f$ and $g$ are holomorphic in $\cM_n$ for all $0 \leq n \leq \ve L/C_{1}$. Recall that, in the present lemma, $\cM_{0}=\left[E_{0}-\ve, \frac{\lambda_{0}+\lambda_{1}}{2}\right] -i \left[0, C_0\frac{n}{L^2}\right]$.\\
We will prove first that $f$ and $g$ have the same number of zeros in $\cM_n$.\\
 First of all, since Lemma \ref{L:difference}, for all $E\in \cM_n$, we have
\begin{align}\label{eq:7.14}
&|f(E)-g_n(E)| \leq \left |e^{-i \theta(E)} -e^{-i \theta(\ld_n)}\right | +|S_{n,L}(E)-S_{n,L}(\ld_n)| \notag \\
&\lesssim L|\ld_n-E|+ \left |e^{-i \theta(E)} -e^{-i \theta(\text{Re}E)}\right| + \left |e^{-i \theta(\text{Re}E)} -e^{-i \theta(\ld_n)}\right|\\
&\lesssim L|\ld_n-E| + |\text{Im}E|+|\text{Re}E-\ld_n| \lesssim L|\ld_n-E| \lesssim \frac{n+1}{L} \notag.
\end{align}
Next we will check that, on the boundary $\gamma_n=ABCD$ of $\cM_n$ (see Figure \ref{F:f5}), $|g_n(E)|$ is much larger than $\frac{n}{L}$, hence, much larger than $|f(E)-g_n(E)|$.\\
To do so, we estimate the imaginary part of $g_n(E)$,  
\begin{align}\label{eq:7.15}
|\text{Im}g_n(E)|&= \left |\frac{a_n \text{Im} E}{(\ld_n -\text{Re}E)^2 + \text{Im}^2E} -e^{\text{Im}\theta(\ld_n)} \sin(\text{Re}\theta(\ld_n)) \right|\\
& \geq e^{\text{Im}\theta(\ld_n)} |\sin(\text{Re}\theta(\ld_n))| - \underbrace{\left| \frac{a_n \text{Im} E}{(\ld_n -\text{Re}E)^2 + \text{Im}^2E}\right|}_{= P}.\notag
\end{align}
We will now upper bound $P$ on the boundary $\gamma_n=ABCD$ of $\cM_n$.\\
On the interval $AB$, $E$ is real, hence, $P=0$.\\ 
On $AD$, $\text{Re}E=\frac{\ld_{n-1}+\ld_n}{2}$. 
Then,
$$(\ld_n -\text{Re}E)^2 + \text{Im}^2E \gtrsim (\ld_n-\ld_{n-1})^2 \gtrsim \frac{(n+1)^2}{L^4}.$$ 
The same bound holds for $E\in BC$.\\ 
Note that, when $n=0$, $|\ld_{n}-\ld_{n-1}|=|\ld_{0}-(E_{0}-\ve)| \gtrsim \ve \gg \frac{1}{L^{2}}$.\\
Finally, consider the interval $CD$ with $|\text{Im}E|=C_0 \frac{n+1}{L^2}$,
$$(\ld_n -\text{Re}E)^2 + \text{Im}^2E \geq  \text{Im}^2E = C_0^2 \frac{(n+1)^2}{L^4}.$$
To sum up, on the curve $\gamma_n$, 
\begin{equation}\label{eq:7.16}
P \lesssim \frac{a_n L^4 |\text{Im}E| }{(n+1)^2} \lesssim \frac{n+1}{L}.
\end{equation}
From \eqref{eq:7.15}-\eqref{eq:7.16}, there exists a constant $c_{0}>0$ such that 
\begin{equation}\label{eq:7.17}
|g_n(E)| \geq |\text{Im}g_n(E)| \geq c_0 > |f(E)-g_n(E)| \text{ on $\gamma_n$}.
\end{equation}
Then, \eqref{eq:7.14}, \eqref{eq:7.17} and Rouch\'{e}'s theorem yield that $f$ and $g$ have the same number of zeros in $\cM_n$.\\
We see that $g_n(E)=0$ admits the unique solution $\tilde{z}_n$ in $\mathbb C$ given by:
\begin{equation}\label{eq:7.18}
\tilde{z}_n=\ld_n +\frac{a_n}{S_{n, L}(\ld_n)+e^{-i\theta(\ld_n)}}.
\end{equation}
Let's check that $\tilde{z}_n$ belongs to $\cM_n$. Note that, by our convention for $\theta(E)$, $\theta(\ld_n) \in [-\pi, 0]$, hence, $\text{Im}\tilde{z}_n$ is negative. Moreover, since $|\sin \left(\theta(\lambda_{n}) \right) | \geq c_{0}>0$, we have 
\begin{equation}\label{eq:7.19}
|\ld_n - \tilde{z}_n| \leq \frac{a_n}{|\sin(\theta(\ld_n))|} \leq \frac{a_n}{c_0} \lesssim \frac{(n+1)^2}{L^3}.
\end{equation}
Hence, $\tilde{z}_n \in \cM_n$. This implies that the equation $f(E)=0$ has a unique solution, say $z_n$, in $\cM_n$ as well. In the other words, $z_n$ is the unique resonance in $\cM_n$.
\end{proof}
Finally, we complete the present chapter by giving a proof for the main theorem, Theorem \ref{T:maingeneric}.  
\begin{proof}[Proof of Theorem \ref{T:maingeneric}]
First of all, Lemmata \ref{L:lesson} and \ref{L:comparegeneric} yield that there is one and only one resonance, say $z_{n}$,  in each rectangle $B_{n, \ve}=\left[ \frac{\lambda_{n-1}+\lambda_{n}}{2}, \frac{\lambda_{n}+\lambda_{n+1}}{2} \right] -i[0,\ve^{5}]$ for any $1 \leq n \leq \ve L/C_{1}$. For $n=0$, there is a unique resonance $z_{0}\in \left [E_{0}-\ve, \frac{\lambda_{0}+\lambda_{1}}{2}\right] -i \left[0, \frac{C_{0}}{L^2}\right]$. Recall that we use the convention $\ld_{-1}:=2E_{0}-\ld_{0}$ in Theorem \ref{T:maingeneric}. Then, $\frac{\ld_{-1}+\ld_0}{2}$ is equal to $E_0$, not $E_0-\ve$. We will prove later that $z_0$ actually stays inside the rectangle $\cM_0= \left [E_{0}, \frac{\lambda_{0}+\lambda_{1}}{2}\right] -i \left[0, \frac{C_{0}}{L^2}\right]$. \\
We will now take one step further to say something about the magnitude of $z_n$ and its imaginary part.
Let $\tilde{z}_{n}$ be the number defined in \eqref{eq:7.18}. Put $\alpha_{n}= S_{n,L}(\lambda_{n}) +e^{-i \theta(\lambda_{n})}$.
Then, since Lemma \ref{L:lowerbound*}, $|\text{Im}\alpha_{n}| = |\sin \left(\theta(\lambda_{n}) \right) | \geq c_{0}>0$ and $|\alpha_{n}| \lesssim \frac{1}{\ve^{2}}$.\\
Let's consider the square $D_{n,r}=\tilde{z}_n+ r[-1, 1]^2$ centered at $\tilde{z}_n$. We will choose $r< \frac{a_{n}}{|\alpha_{n}|}$ such that we can make sure that the resonance $z_n$ belongs to $D_{n,r}$ by Rouch\'{e} theorem. Precisely, we find $r$ such that
\begin{equation}\label{eq:7.20}
|g_n(E)| > |f(E)-g_n(E)| \text{ on the boundary of $D_{n,r}$}.
\end{equation} 
First, we rewrite $g_n(E)$ as follows
\begin{equation}\label{eq:7.21}
|g_n(E)| =|g_n(E)-g_n(\tilde{z}_n)| = |E-\tilde{z}_n|\frac{a_n}{|\ld_n -E||\ld_n-\tilde{z}_n|}=|\alpha_{n}| \frac{|E-\tilde{z}_{n}|}{|\ld_{n}-E|}.
\end{equation}
Note that, for all $E\in \partial D_{n,r}$, $\frac{r}{2}\leq |E-\tilde{z}_n| \leq \frac{r}{\sqrt{2}} $. Combining this and \eqref{eq:7.19}, we infer that 
$$ |\ld_n -E| \leq |\ld_n-\tilde{z}_n| +|E-\tilde{z}_n| \leq \frac{a_{n}}{|\alpha_{n}|} +\frac{r}{\sqrt{2}} \leq 2 \frac{a_{n}}{|\alpha_{n}|}.$$
Hence, 
\begin{equation}\label{eq:7.22}
|g_n(E)| \geq \frac{|\alpha_{n}|^{2}}{4 a_{n}} r.
\end{equation}
On the other hand, from \eqref{eq:7.14}, for all $E\in \partial D_{n,r}$,
\begin{equation}\label{eq:7.23}
|f(E)-g_n(E)| \lesssim L|\ld_n-E| \lesssim L (|\ld_n-\tilde{z}_n|+|\tilde{z}_n-E|) \lesssim L \frac{a_{n}}{|\alpha_{n}|}.
\end{equation}
Hence, it suffices to choose $r< \frac{a_{n}}{|\alpha_{n}|}$ such that $\frac{|\alpha_{n}|^{2}}{4 a_{n}} r \geq C L \frac{a_{n}}{|\alpha_{n}|} $ where $C$ is a large constant.
Obviously, $r= \frac{C}{|\alpha_{n}|^{3}}\cdot \frac{(n+1)^4}{L^5}$ satisfies the above inequality with $C$ large.\\
Hence, by Rouch\'{e}'s theorem, the resonance $z_n$ belongs to $D_{n,r}$ and 
\begin{equation}\label{eq:7.70}
\left | z_{n}- \ld_{n}- \frac{a_{n}}{\alpha_{n}} \right| \leq \frac{C}{|\alpha_{n}|^{3}} \cdot \frac{(n+1)^{4}}{L^{5}}.
\end{equation}
Hence, the asymptotic formula \eqref{eq:asymptotic*} follows.\\
We  now estimate the imaginary part of $z_{n}$. Since \eqref{eq:7.70}, we have 
\begin{equation}\label{eq:7.71}
\left | \text{Im}z_{n}- \frac{a_{n} \sin\left( \theta(\ld_{n})\right)  }{|\alpha_{n}|^{2}} \right| \leq \frac{C}{|\alpha_{n}|^{3}} \cdot \frac{(n+1)^{4}}{L^{5}}.
\end{equation} 
Consequently, the asymptotic formula \eqref{eq:imEn*} for $\text{Im}z_n$ holds true and $|\text{Im} z_n| \lesssim \frac{(n+1)^2}{L^3}$.  Moreover, there exists $C>0$ such that
\begin{equation}\label{eq:7.72}
\left| \text{Im} z_{n}\right| \geq  \frac{(n+1)^{2}}{|\alpha_{n}|^{2} L^{3}} \left( \frac{1}{C}- \frac{C (n+1)^{2}}{|\alpha_{n}| L^{2}} \right) \gtrsim \ve^{4} \frac{(n+1)^{2}}{L^{3}}.
\end{equation}
Finally, when $n=0$, \eqref{eq:7.70} yields $|z_{0}-\ld_{0}| \lesssim \frac{1}{L^{3}}$. Hence, $z_{0}$ belongs to the rectangle $\left [E_{0}, \frac{\lambda^{i}_{0}+\lambda^{i}_{1}}{2}\right] -i \left[0, \frac{C_{0}}{L^2}\right]$. On the other hand, $z_{0}$ is the unique resonance in the rectangle $\left [E_{0}-\ve, \frac{\lambda^{i}_{0}+\lambda^{i}_{1}}{2}\right] -i \left[0, \frac{C_{0}}{L^2}\right]$. As a result, there are no resonances in $\left [E_{0}-\ve, E_{0}\right] -i \left[0, \frac{C_{0}}{L^2}\right]$.\\
We thus have Theorem \ref{T:maingeneric} proved.
\end{proof}
\section{Outside the spectrum}\label{S:proofmain1}
In the present section, we give a proof for Theorem \ref{T:outside} which describes a free resonance region of $H_{L}^{\nn}$ below intervals which meet $\partial \Sigma_{\zz}$ from outside the spectrum $\Sigma_{\zz}$.
\begin{proof}[Proof of Theorem \ref{T:outside}]
First of all, according to the point $(1)$ of Theorem \ref{T:maingeneric}, there are no resonances in $\left[E_{0}-\ve, E_{0} \right]- i \left[0, \frac{C_{0}}{L^{2}} \right]$ with $C_{0}>0$ large. Next, we will show that there are no resonances in $\cR_{1}= \left[E_{0}-\ve, E_{0} \right]- i \left[\frac{C_{0}}{L^{2}}, \ve^{5} \right]$ either. In order to do so, it suffices to prove that  
\begin{equation}\label{eq:7.73}
|\text{Im}S_{L}(E)| \lesssim \ve \text{ in $\cR_{1}$}.
\end{equation}
Note that $S_{L}(E)$ is holomorphic in the domain $\mathring{\cR_{1}}$.\\ 
Hence, $|\text{Im}S_{L}(E)|=-\text{Im}S_{L}(E)$ is a harmonic function in $\mathring{\cR_{1}}$. By the maximum principle for harmonic functions, it thus suffices to prove \eqref{eq:7.73} on the boundary $\gamma= ABCD$ of $\cR_{1}$ (see Figure \ref{F:f6}).\\
Let $(\ld^{i}_{\ell})_{\ell}$ be (distinct) eigenvalues of $H_{L}$ in the band $B_{i}$. Reasoning as in Lemma \ref{L:lesson}, we can assume that $\ld^{i}_{k}>E_{0}+ 2\ve^{2}$ for all $k>\ve L$ and $\ld^{i}_{k}\in B_{i}$ to get 
\begin{equation}\label{eq:7.74}
|\text{Im} S_L(E)| \lesssim \sum^{\ve L}_{k=0} \frac{a^{i}_k y}{(\lambda^{i}_k-x)^2+y^2}+ \ve \text { for all $z=x-iy \in \cR_{1}$ with $y>0$}.
\end{equation}
Throughout the rest of the proof, we will skip the superscript $i$ in $\ld^{i}_{k}$ and $a^{i}_{k}$.\\
From \eqref{eq:7.74}, it suffices to show that 
\begin{equation}\label{eq:7.75}
S= \sum^{\ve L}_{k=0}  \frac{a_k y}{(\lambda_k-x)^2+y^2} \lesssim \ve \text{ on $\gamma=ABCD$}.
\end{equation}
First of all, we consider $S$ on $AB$. On the interval $AB$, $y=\frac{C_{0}}{L^{2}}$  and $x\in [E_{0}-\ve, E_{0}]$. Then,
$|\ld_{k} -x| \geq |\ld_{k}-E_{0}| \gtrsim \frac{k^{2}}{L^{2}} \geq y $ for all $\ve L \geq k \gtrsim \sqrt{C_{0}}$. Combining this with the fact that $a_k \asymp \frac{|\ld_k-E_0|}{L} \asymp \frac{k^2}{L^3}$, we have
\begin{align}\label{eq:7.77}
\sum_{\ve L \geq k \geq \sqrt{C_{0}}}  \frac{a_k y}{(\lambda_k-x)^2+y^2}  \asymp \frac{1}{L^{2}} \sum_{\ve L \geq k \geq \sqrt{C_{0}}}  \frac{a_k}{(\lambda_k-x)^2} \lesssim \frac{1}{L^{2}} \sum_{\ve L \geq k \geq \sqrt{C_{0}}} \frac{L}{k^{2}} \lesssim \frac{1}{L}.
\end{align}
On the other hand, we see that 
\begin{align}\label{eq:7.76}
\sum^{\sqrt{C_{0}}}_{k=0}  \frac{a_k y}{(\lambda_k-x)^2+y^2} \leq \frac{1}{y} \sum^{\sqrt{C_{0}}}_{k=0} a_{k} \lesssim 
\frac{1}{L} \sum^{\sqrt{C_{0}}}_{k=1} k^{2} \lesssim \frac{1}{L}.
\end{align} 
Therefore, \eqref{eq:7.77}-\eqref{eq:7.76} yield $S \lesssim \frac{1}{L} \ll \ve$ on $AB$.\\
Next, on $CD$, $y=\ve^{5}$ and $x\in [E_{0}-\ve, E_{0}]$. We will split $S$ into two sums $S_{1}$ and $S_{2}$. On the one hand, we estimate
\begin{equation}\label{eq:7.78}
S_{1}= \sum^{\ve^{2} L-1}_{k=0} \frac{a_k y}{(\lambda_k-x)^2+y^2} \leq \frac{1}{y} \sum^{\ve^{2} L-1}_{k=0} a_{k}  \lesssim 
\frac{1}{\ve^{5}L^{3}} \sum^{\ve^{2} L-1}_{k=1} k^{2} \lesssim  \frac{1}{\ve^{5}L^{3}} \cdot (\ve^{2} L)^{3} \lesssim \ve.
\end{equation}
On the other hand, 
\begin{align}\label{eq:7.79}
S_{2}&= \sum^{\ve L}_{k=\ve^{2}L} \frac{a_k y}{(\lambda_k-x)^2+y^2} \leq y \sum^{\ve L}_{k=\ve^{2}L}\frac{a_k}{(\lambda_k-x)^2} \leq \ve^{5} L  \sum^{\ve L}_{k=\ve^{2}L} \frac{1}{k^{2}} \notag \\
&\lesssim \ve^{5} L  \left( \frac{1}{\ve^{2}L}- \frac{1}{\ve L}\right) \lesssim \ve^{3}.  
\end{align}
By \eqref{eq:7.78} and \eqref{eq:7.79}, we infer that $S \lesssim \ve$ on $CD$.\\
Note that $|S| \lesssim \ve$ on $BC$ according to Lemma \ref{L:lesson}. Finally, we consider the sum $S$ on the interval $AD$ where $z=E_{0}-\ve -iy$ with $\frac{C_{0}}{L^{2}} \leq y \leq \ve^{5}$. In this case, $|\ld_{k}-x| \geq |x-E_{0}| - |\ld_{k}-E_{0}| \gtrsim \ve$ for all $0\leq k \leq \ve L$. Hence, 
\begin{equation*}\label{eq:7.80}
S \asymp  y \sum^{\ve L}_{k=0} \frac{a_k}{(\lambda_k-x)^2} \lesssim \frac{y}{\ve^{2}} \sum^{\ve L}_{k=0} a_{k} \lesssim \ve^{3}  \sum^{\ve L}_{k=1} \frac{k^{2}}{L^{3}} \lesssim \ve^{3} \cdot \frac{(\ve L)^{3}}{L^{3}} \lesssim \ve^{6}<\ve.
\end{equation*}
To sum up, $S$, hence $|\text{Im} S_{L}(E)|$, is bounded by $\ve$ up to a positive constant factor on $\cR_{1}$. Therefore there are no resonances in $\cR_{1}$ and the claim follows.
\end{proof}
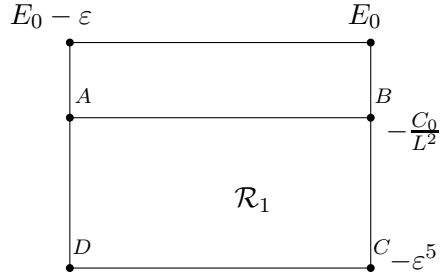
\begin{figure}[H]
\begin{center}
\begin{tikzpicture}[line cap=round,line join=round,x=0.5cm,y=0.5cm]
\clip(-1.39,-1.0) rectangle (13.79, 9.13);
\draw (2,6)-- (10,6);
\draw (10,6)-- (10,4);
\draw (10,4)-- (2,4);
\draw (2,4)-- (2,6);
\draw (2,0)-- (10,0);
\draw (10,0)-- (10,4);
\draw (2,4)-- (2,0);
\draw (0.13,7.3) node[anchor=north west] {$E_0 -\ve$};
\draw (9.12,7.3) node[anchor=north west] {$E_0$};
%\draw (5.12,7.38) node[anchor=north west] {$\lambda^{i}_n$};
\draw (10.13,4.41) node[anchor=north west] {$- \frac{C_0}{L^2}$};
\draw (10.23,0.9) node[anchor=north west] {$-\varepsilon^5$};
%\draw (6.0,5.42) node[anchor=north west] {$\mathcal M_n$};
\draw (6.11,2.41) node[anchor=north west] {$\mathcal R_1$};
\begin{scriptsize}
\fill [] (2,6) circle (1.5pt);
%\draw[] (2.36,6.3) node {$A$};
\fill [] (10,6) circle (1.5pt);
%\draw[] (10.3,6.3) node {$B$};
\fill [] (10,4) circle (1.5pt);
\draw[] (10.34,4.59) node {$B$};
\fill [] (2,4) circle (1.5pt);
\draw[] (2.36,4.59) node {$A$};
\fill [] (2,0) circle (1.5pt);
\draw[] (2.32,0.57) node {$D$};
\fill [] (10,0) circle (1.5pt);
\draw[] (10.3,0.57) node {$C$};
%\fill [] (6,6) circle (1.5pt);
\end{scriptsize}
\end{tikzpicture}
\caption[]{Free resonance region below $[E_{0}-\ve, E_{0}]$}
\label{F:f6}
\end{center}
\end{figure}

 %\bibliographystyle{plain}
%\bibliography{mybib.bib}

\def\cprime{$'$}

\noindent {\tiny (Trinh Tuan Phong) Laboratoire Analyse, G\'{e}om\'{e}trie $\&$ Applications,\\ 
	UMR 7539, Institut Galil\'{e}e, Universit\'{e} Paris 13, Sorbonne Paris Cit\'{e},\\ 
	99 avenue J.B. Cl\'{e}ment, 93430 Villetaneuse, France\\
\emph{Email}: trinh@math.univ-paris13.fr 
}

\end{document}